\numberwithin{equation}{section}
\theoremstyle{plain}
\newtheorem{satz}{Theorem}[section]
\newtheorem{defi}[satz]{Definition}
\newtheorem{cor}[satz]{Corollary}\newtheorem{lem}[satz]{Lemma}
\newtheorem{rem}[satz]{Remark}
\newcommand{\re}{\ensuremath{\mathbb{R}}}\newcommand{\N}{\ensuremath{\mathbb{N}}}
\newcommand{\zz}{\ensuremath{\mathbb{Z}}}
\newcommand{\T}{\ensuremath{\mathbb{T}^d}}\newcommand{\tor}{\ensuremath{\mathbb{
T }}}
\newcommand{\Z}{{\ensuremath{\zz}^d}}
\newcommand{\n}{\ensuremath{{\N}_0}}
\newcommand{\R}{\ensuremath{{\re}^d}}
\newcommand{\cm}{\ensuremath{\mathcal M}}
\newcommand{\cn}{\ensuremath{\mathcal N}}
\newcommand{\rank}{{\rm rank \, }}
\newcommand{\mix}{{\rm mix}}
\newcommand{\eps}{\varepsilon}
\newcommand{\bproof}{\begin{proof}}
\newcommand{\eproof}{\end{proof}}
\newlength{\fixboxwidth}
\newcommand{\be}{\begin{equation}}
\newcommand{\ee}{\end{equation}}
\newcommand{\beq}{\begin{eqnarray}}
\newcommand{\beqq}{\begin{eqnarray*}}
\newcommand{\eeq}{\end{eqnarray}}
\newcommand{\eeqq}{\end{eqnarray*}}
\begin{document}
\title{Approximation of mixed order Sobolev functions on the $d$-torus -- Asymptotics, preasymptotics and $d$-dependence}

\author{Thomas K\"uhn$^a$,  Winfried Sickel$^b$ and Tino Ullrich$^{c,}$\footnote{Corresponding author. Email: tino.ullrich@hcm.uni-bonn.de, Tel: +49 228 73 62224} \\\\
$^a$ University of Leipzig, Augustusplatz 10, 04109 Leipzig, Germany\\
$^b$ Friedrich-Schiller-University Jena, Ernst-Abbe-Platz 2, 07737 Jena, Germany\\
$^c$ Hausdorff-Center for Mathematics, Endenicher Allee 62, 53115 Bonn, Germany}


\date{\today}

\maketitle

\begin{abstract}
We investigate the approximation of $d$-variate periodic functions in Sobolev spaces 
of dominating mixed (fractional) smoothness $s>0$ on the $d$-dimensional torus, where 
the approximation error is measured in the $L_2-$norm.
In other words, we study the approximation numbers of the Sobolev embeddings $H^s_\mix(\tor^d)\hookrightarrow L_2(\tor^d)$, with
particular emphasis on the dependence on the dimension $d$. For any fixed smoothness $s>0$, 
we find the exact asymptotic behavior of the constants as $d\to\infty$. We observe
super-exponential decay of the constants in $d$, if $n$, the number of linear samples of $f$, is large.
In addition, motivated by numerical implementation issues, we also focus on
the error decay that can be achieved by low rank approximations. We present some surprising 
results for the so-called ``preasymptotic'' decay and point out connections to the recently introduced 
notion of quasi-polynomial tractability of approximation problems.

\medskip
\noindent
{\bf Keywords} Approximation numbers $\cdot$ Sobolev spaces of mixed smoothness $\cdot$ rate of convergence $\cdot$ preasymptotics $\cdot$ $d$-dependence 
$\cdot$ quasi-polynomial tractability

\medskip
\noindent
{\bf Mathematics Subject Classifications (2000)} \ 42A10; 41A25; 41A63; 46E35; 65D15

\end{abstract}



\section{Introduction}


In the present paper we investigate the  behavior of the
approximation numbers of the embeddings
$$
    I_d:H^s_\mix(\T) \to L_2(\T)\,,\quad s>0\,,\quad d\in \N\,,
$$
where $H^s_\mix(\T)$ is the periodic Sobolev space  of dominating mixed fractional smoothness $s$
on the $d$-torus $\tor^d$ represented in $\re^d$ by the cube $[0,2\pi]^d$. The approximation numbers of a bounded linear operator $T:X\to Y$ between
two Banach spaces are defined as
\be\label{0002}
 \begin{split}
a_n (T:\, X \to Y)&:= \inf\limits_{\rank A<n} \,\sup\limits_{\|x|X\|\leq 1}\|\, Tx -Ax|Y\|\\
&= \inf\limits_{\rank A<n}\|T-A:X \to Y\| , \qquad n \in \N\,.
 \end{split}
\ee
{~}\\
They describe the best approximation of $T$ by finite rank operators. If $X$ and $Y$ are Hilbert
spaces and $T$ is compact, then $a_n(T)$ is the $n$th singular number of $T$.

The first result on the approximation of Sobolev embeddings is due to
Kolmogorov \cite{Kol}. He showed already in 1936 that in the univariate (homogeneous) case 
with integer
smoothness $m\in \N$ the approximation numbers $a_n(I_d:\dot{H}^m(\tor) \to L_2(\tor))$ 
are given by $n^{-m}$. Here we are interested in the multivariate (inhomogeneous) situation, where
$d$ is large, and investigate the approximation numbers
$a_n(I_d:H^s_{\mix}(\T) \to L_2(\T))$ for arbitrary smoothness parameters $s>0$.
The spaces $H^s_{\mix}(\T)$ are much smaller than the isotropic spaces
$H^s(\T)$, and often they are considered as a reasonable model for reducing the computational effort
in high-dimensional approximation. 

In recent years there has been an
increasing interest in the approximation of multivariate functions, since
many problems, e.g. in finance or quantum chemistry, are modeled in associated
function spaces on high-dimensional domains. 
It has been shown that the functions which have to be approximated often possess a mixed Sobolev regularity, as for instance
eigenfunctions of certain Hamilton operators in quantum chemistry, see Yserentant's lecture note \cite{Ys10} and the references given
there.

In \cite[Theorem~III.4.4]{T93b} the two-sided estimate 
\begin{equation}\label{ws-06}
c_s(d)  \, n^{-s}(\ln n)^{(d-1)s} \le  a_{n} (I_d: \, H^{s}_\mix (\T) \to L_2 (\T)) \le
 C_s(d)  \, n^{-s}(\ln n)^{(d-1)s}\,,
\end{equation}
for $n\in\N$ can be found. Here the constants $c_s(d)$ and $C_s(d)$, depending only on $d$ and $s$, 
were not explicitly determined. 
Some more references and comments to the history of (\ref{ws-06}) will be given in Subsection \ref{lit}.
Our main focus is to clarify, for arbitrary but fixed $s>0$, the
dependence of these constants on $d$. 
In fact, it is necessary to fix the norms on the spaces $H^{s}_\mix (\T)$ in advance, since the 
constants $c_s(d)$ and $C_s(d)$ in \eqref{ws-06} depend on the size of the respective unit balls.
Surprisingly, for a collection of quite natural norms (see the next subsection for details)  it turns out that we can 
choose    
$$
  C_s(d) = \Big[\frac{\lambda^{d}}{(d-1)!}\Big]^s\,,
$$
with $2\leq \lambda \leq 6$ depending on the chosen norm. Note that $C_s(d)$ decays super-exponentially in $d$.
This observation can be compared to similar results in  Bungartz, Griebel \cite{BG04}, 
Griebel \cite{Gr},  Schwab et al.\ \cite{SST08} and D\~ung, Ullrich \cite{DiUl13}, 
where the authors noticed at least exponential decay of the constants. 
A more detailed comparison will be made in Subsection \ref{lit}. 

Let us ignore the constants $c_s(d)$ and $C_s(d)$ for a moment, and fix $s>0$.
Then, for arbitrary $d\in\N$, the function $f_{d}(t):= t^{-s}\, (\ln t)^{s(d-1)}$ is 
increasing on $[1,e^{d-1}]$ and decreasing on $[e^{d-1},\infty)$, hence
its maximum on $[1,\infty)$ is
\[
\max_{t\ge 1} f_{d}(t) = f_{d}(e^{d-1}) = \Big(\frac{d-1}{e}\Big)^{s(d-1)}\, ,
\]
which increases super-exponentially in $d$. 
That means, for large $d$ we have to wait ``exponentially long'' until the 
sequence $n^{-s}(\ln n)^{(d-1)s}$ decays, and even longer until it becomes less than one.
Note that for all norms on $H^{s}_\mix (\T)$ to be considered in this paper, we have 
$a_{n} (I_d: \, H^{s}_\mix (\T) \to L_2 (\T))\le 1$ for all $n$.
Consequently, for small values of $n$ the behavior of 
$a_{n} (I_d: \, H^{s}_\mix (\T) \to L_2 (\T))$ is not properly reflected by the asymptotic rate $n^{-s}(\ln n)^{(d-1)s}$.

This is the reason why we split our investigations into three parts. First, we show that the limit
$$
\lim_{n\to \infty} \, \frac{n^s\cdot a_{n} (I_d: \, H^{s}_\mix (\T) \to L_2 (\T))}{(\ln n)^{(d-1)s}} = \Big[\frac{2^d}{(d-1)!}\Big]^s
$$
exists, having the same value for various norms. Secondly, for exponentially large $n$, we calculate some admissible 
constants $c_s(d)$ and $C_s(d)$. 
Finally, we consider in some detail the situation of small $n$, more precisely in the range $1 \le n\le 4^d$. 
For large $d$ this is the most interesting part for practical issues,
since $4^d$ pieces of information might already be too much for any reasonable algorithm.

The paper is organized as follows. In Section \ref{spaces} we introduce and investigate the Sobolev spaces of interest.
Here we are mainly interested in some assertions on equivalent norms and embeddings.
In Subsection \ref{iso} we add a few remarks to isotropic Sobolev spaces and their relation to 
Sobolev spaces of dominating mixed smoothness. Subsection \ref{parapp}
in this section is devoted to some basics on approximation numbers, in particular, in connection with diagonal operators.
In Section \ref{combin} we study some combinatorial identities and estimates.
Section \ref{number} contains our main results.
The final Section \ref{Erich} transfers our approximation results into the recently introduced notion of quasi-poynomial tractability of the 
respective approximation problems.

{\bf Notation.} As usual, $\N$ denotes the natural numbers, $\N_0$ the non-negative integers,
$\zz$ the integers and
$\re$ the real numbers. With $\tor$ we denote the torus represented by the interval $[0,2\pi]$.
For a real number $a$ we put $a_+ := \max\{a,0\}$ and denote by $[a]$ its greatest integer part.
The letter $d$ is always reserved for the dimension in $\Z$, $\R$, $\N^d$, and $\T$.
For $0<p\leq \infty$ and $x\in \R$ we denote $|x|_p = (\sum_{i=1}^d |x_i|^p)^{1/p}$ with the
usual modification for $p=\infty$. The symbol $\# \Omega$ stands for the cardinality of the set $\Omega$.
If $X$ and $Y$ are two Banach spaces, the norm
of an element $x$ in $X$ will be denoted by $\|x|X\|$ and the norm of an operator
$A:X \to Y$ by $\|A:X\to Y\|$. The symbol $X \hookrightarrow Y$ indicates that there is a
continuous embedding from $X$ into $Y$. The equivalence $a_n\sim b_n$ means that there are constants $0<c_1\le c_2<\infty$ such that 
$c_1a_n\le b_n\le c_2a_n$ for all $n\in\N$.


\section{Preliminaries}\label{spaces}



\subsection{Sobolev spaces of dominating mixed smoothness on the $d$-torus}


All results in this paper are stated for function spaces on the $d$-torus $\T$,
which is represented in the Euclidean space $\R$ by the cube
$\tor^d = [0,2\pi]^d$, where opposite sides are identified.
In particular, for functions $f$ on $\tor$, we have $f(x) = f(y)$ whenever $x-y = 2\pi k$ for some $k\in \Z$. These
functions can be viewed as $2\pi$-periodic in each component.

The space $L_2(\T)$ consists of all (equivalence classes of) measurable functions $f$ on $\T$
such that norm
$$
    \|f|L_2(\T)\|:=\Big(\int_{\T} |f(x)|^2\,dx\Big)^{1/2}
$$
is finite. All information on a function $f\in L_2(\T)$ is encoded
in the sequence $(c_k(f))_k$ of its Fourier coefficients, given by
\[
c_k (f):= \frac{1}{(2\pi)^{d/2}} \, \int_{\T} \, f(x)\, e^{-ikx}\, dx\, , \qquad k
\in
\Z\, .
\]
Indeed, we have Parseval's identity
\be\label{Pars}
    \|f|L_2(\T)\|^2 = \sum\limits_{k\in \Z} |c_k(f)|^2
\ee
as well as
\[
f(x) = \frac{1}{(2\pi)^{d/2}}\, \sum_{k \in \Z} \, c_k (f)\, e^{ikx}
\]
with convergence in $L_2(\T)$.

The mixed Sobolev space $H^m_\mix(\T)$ of integer smoothness $m\in \N$ is the collection
of all $f\in L_2 (\T)$ such that all
distributional derivatives $D^\alpha  f$ of order $\alpha = (\alpha_1,...,\alpha_d)$ with
$\alpha_j\le m$, $j=1,...,d$, belong to $L_2 (\T)$.
We put
\begin{equation}\label{t-1}
\| \, f \, |H^m_{\mix} (\T)\| := \Big(
\sum_{|\alpha|_\infty\le m}
\| \, D^\alpha  f \, |L_2 (\T)\|^2\Big)^{1/2} \, .
\end{equation}
One can rewrite this definition in terms of Fourier coefficients.
Taking $c_k(D^{\alpha}f) = (ik)^{\alpha}c_k(f)$ into account, Parseval's identity \eqref{Pars} 
implies (using the convention $0^0 = 1$)
\beq\label{norm2}
\| \, f \, |H^m_{\mix} (\T)\|^2 & = &
\sum_{|\alpha|_\infty\le m}
\Big\| \, \frac{1}{(2\pi)^{d/2}}\sum_{k \in \Z} \, c_k (f)\, (ik)^\alpha  e^{ikx} \,
\Big|L_2 (\T)\Big\|^2
\label{natural}
\\\nonumber
& = & \sum\limits_{k\in \Z} |c_k(f)|^2 \, \Big(\sum_{|\alpha|_\infty\le m}
\nonumber
\prod\limits_{j=1}^d |k_j|^{2\alpha_j}\Big)
= \sum\limits_{k\in \Z} |c_k(f)|^2 \, \Big(\prod\limits_{j=1}^d\sum\limits_{\alpha = 0}^{m}
|k_j|^{2\alpha}\Big)\\
\nonumber
&=& \sum\limits_{k\in \Z} |c_k(f)|^2 \, \Big(\prod\limits_{j=1}^d\sum\limits_{\alpha = 0}^{m}
|k_j|^{2\alpha}\Big) =  \sum\limits_{k\in \Z} |c_k(f)|^2 \,\prod\limits_{j=1}^d v_m(k_j)^2\,,
\nonumber
\eeq
where
\be\label{f91}
  v_m(\ell)^2 = \sum\limits_{\alpha = 0}^m |\ell|^{2\alpha} 
\ee
Due to our convention $0^0=1$ we have $v_m(0)=1$, moreover $v_m(\pm 1)=m+1$.
Defining 
\be\label{wm(k)}
w_m(k) := \prod_{j=1}^d v_m(k_j)\qquad\text{for } k = (k_1,...,k_d) \in \Z\,, 
\ee
we obtain
$$
   \| \, f \, |H^m_{\mix} (\T)\| = \Big[\sum\limits_{k\in \Z} |c_k(f)|^2 w_m(k)^2\Big]^{1/2}\,.
$$
We could have also started with the equivalent norm
\be\label{norm3}
\| \, f \, |H^m_\mix (\T)\|^* := \Big(\sum_{\alpha \in \{0,m\}^d} \| \, D^{\alpha}f\, |L_2
(\T)\|^2\Big)^{1/2} \, .
\ee
%
Similarly as above, a reformulation of (\ref{norm3}) in terms of Fourier coefficients yields 
\be\label{norm4}
\| \, f \, |H^m_\mix (\T)\|^*  =
 \,
 \Big[\sum_{k \in \Z} \, |c_k (f)|^2  \prod\limits_{j=1}^d (1+|k_j|^{2m}) \Big]^{1/2}\,.
\ee

Inspired by  \eqref{natural} and \eqref{norm4} we define Sobolev spaces of dominating 
mixed smoothness of fractional order $s>0$ as follows.

\begin{defi}\label{varnorms}
Let $s>0$. The periodic Sobolev space $H^s_{\mix}(\T)$ is the collection of all $f\in L_2(\T)$
such that $\|f|H^s_{\mix}(\T)\|<\infty$, where\\
{\em (i)} the classical (natural) norm $\|f|H^s_{\mix}(\T)\|^+$ is defined as
\be\label{norm4.5}
   \| \, f \, |H^s_{\mix} (\T)\|^+  :=
\, \Big[
 \sum_{k \in \Z} \, |c_k (f)|^2 \prod\limits_{j=1}^d\big(1+|k_j|^2\big)^{s} \Big]^{1/2}\,,
\ee
{\em (ii)} the modified classical norm $\|f|H^s_{\mix}(\T)\|^{*}$ is defined as
\be\label{norm5}
\| \, f \, |H^s_{\mix} (\T)\|^*  :=
\, \Big[
 \sum_{k \in \Z} \, |c_k (f)|^2 \prod\limits_{j=1}^d\big(1+ |k_j|^{2\,s}\big)\Big]^{1/2}\,,
\ee
\\
{\em (iii)} and the norm $\|f|H^s_{\mix}(\T)\|^{\#}$ is a further modification defined by
$$
\| \, f \, |H^s_{\mix} (\T)\|^\#  :=
\, \Big[
 \sum_{k \in \Z} \, |c_k (f)|^2 \prod\limits_{j=1}^d\big(1+ |k_j|\big)^{2s} \Big]^{1/2}\,.
$$
\end{defi}

In the sequel we will often use the notation $H^{s,+}_{\mix}(\T)$, $H^{s,*}_{\mix}(\T)$, 
and $H^{s,\#}_{\mix}(\T)$ to indicate which of these equivalent 
norms on $H^s_{\mix}(\T)$ we are considering. For integer smoothness $s=m \in \N$ all three norms 
are also equivalent to the norm given in (\ref{t-1}).
Moreover, in some special cases we do not only have equivalence, but even equality of the norms,
namely 
$$
\|\cdot|H^1_{\mix}(\T)\| = \|\cdot|H^1_{\mix}(\T)\|^{*} = \|\cdot|H^1_{\mix}(\T)\|^+ 
$$
and 
$$
\|\cdot|H^{1/2}_{\mix}(\T)\|^{*} = \|\cdot|H^{1/2}_{\mix}(\T)\|^{\#}\, .
$$

Clearly, the size of the unit balls with respect to equivalent norms can be significantly different.
Or, in other words, switching from one to another equivalent norm might produce equivalence constants which
badly depend on the dimension $d$. Since we are interested in situations where $d$ is large or even $d\to \infty$, we 
have to be very careful with these equivalence constants.
Therefore, in this context, norm one embeddings are of particular interest and will be very useful.
The embeddings given in the next lemma are due the monotonicity
of the norms $|\cdot|_p$\,, where $0<p<\infty$, except (v), which is a 
simple consequence of the fact that the square of an integer is larger than its absolute value.

\begin{lem}\label{norm_one} Let $s>0$. The following embeddings have norm one.\\
{\rm(i)} If $s \geq 1$, then
$$
    H^{s,\#}_{\mix}(\T) \hookrightarrow H^{s,+}_{\mix}(\T) \hookrightarrow H^{s,*}_{\mix}(\T)\,,
$$
{\rm(ii)} if $1/2 \leq s \leq 1$, then
$$
    H^{s,\#}_{\mix}(\T) \hookrightarrow H^{s,*}_{\mix}(\T) \hookrightarrow H^{s,+}_{\mix}(\T) \,,
$$
{\rm(iii)} if $s\leq 1/2$, then
$$
   H^{s,*}_{\mix}(\T) \hookrightarrow H^{s,\#}_{\mix}(\T) \hookrightarrow H^{s,+}_{\mix}(\T)\,,
$$
{\rm(iv)} if $s>t$, then
$$
  H^{s,+}_{\mix}(\T) \hookrightarrow H^{t,+}_{\mix}(\T)\quad,\quad
  H^{s,*}_{\mix}(\T) \hookrightarrow H^{t,*}_{\mix}(\T)\quad,\quad
  H^{s,\#}_{\mix}(\T) \hookrightarrow H^{t,\#}_{\mix}(\T)\,,
$$
{\rm(v)} and finally,
$$H^{s,+}_{\mix}(\T) \hookrightarrow H^{s/2,\#}_{\mix}(\T)\,.$$
\end{lem}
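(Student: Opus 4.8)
The plan is to reduce each of the five statements to a one-dimensional inequality between weights. By Parseval's identity, every one of the three norms in Definition~\ref{varnorms} is a weighted $\ell_2$-norm of the Fourier coefficients whose weight is a product over the coordinates, namely $\|f|H^{s,\bullet}_{\mix}(\T)\|^2=\sum_{k\in\Z}|c_k(f)|^2\prod_{j=1}^d v_s^{\bullet}(k_j)^2$, where the one-dimensional factors are $v_s^{+}(\ell)^2=(1+|\ell|^2)^s$, $v_s^{*}(\ell)^2=1+|\ell|^{2s}$ and $v_s^{\#}(\ell)^2=(1+|\ell|)^{2s}$. Hence, for two weight types $A,B$, the embedding $H^{s,A}_{\mix}(\T)\hookrightarrow H^{s,B}_{\mix}(\T)$ has norm $\le1$ as soon as $v_s^{B}(\ell)\le v_s^{A}(\ell)$ holds for every $\ell\in\zz$: then $\prod_j v_s^{B}(k_j)\le\prod_j v_s^{A}(k_j)$ for all $k\in\Z$, and the termwise comparison of the two weighted $\ell_2$-norms is immediate. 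Since $v_s^{+}(0)=v_s^{*}(0)=v_s^{\#}(0)=1$, testing with a function whose only nonzero Fourier coefficient is $c_0$ shows that the norm is in fact exactly one. So everything comes down to a few elementary one-variable inequalities, which I would organise via monotonicity of the $\ell_p$-functionals, as indicated in the paragraph preceding the lemma.

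For (i)--(iii), fix $\ell\in\zz$ and put $a:=(1,|\ell|)\in\re^2$, so that $|a|_p^{2s}=(1+|\ell|^p)^{2s/p}$ and therefore
\[
v_s^{\#}(\ell)^2=|a|_1^{2s},\qquad v_s^{+}(\ell)^2=|a|_2^{2s},\qquad v_s^{*}(\ell)^2=|a|_{2s}^{2s}.
\]
Since $p\mapsto|a|_p$ is non-increasing on $(0,\infty)$ for the fixed vector $a$, I only have to sort the three exponents $1,2,2s$ according to the size of $s$: for $s\ge1$ one has $1\le2\le2s$, hence $|a|_1\ge|a|_2\ge|a|_{2s}$, which is precisely (i); for $1/2\le s\le1$ one has $1\le2s\le2$, hence $|a|_1\ge|a|_{2s}\ge|a|_2$, i.e. (ii); and for $0<s\le1/2$ one has $2s\le1\le2$, hence $|a|_{2s}\ge|a|_1\ge|a|_2$, i.e. (iii). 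Raising to the power $2s$ preserves these orderings, which is exactly the chain of norm-one embeddings claimed in each case.

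Finally, (iv) is just monotonicity of $t\mapsto t^{\sigma}$ on $[1,\infty)$ together with the trivial inequality $|\ell|^{2s}\ge|\ell|^{2t}$ for $\ell\in\zz$ and $s>t$ (both sides equal $1$ when $\ell=0$), applied to each of the three weights; and (v) follows from $\ell^2\ge|\ell|$ for $\ell\in\zz$, since the $+$-weight with parameter $s$ equals $(1+\ell^2)^s$ while the $\#$-weight with parameter $s/2$ equals $(1+|\ell|)^{s}$, and $1+\ell^2\ge1+|\ell|$. There is no genuinely hard step here; the only points that need a little care are the bookkeeping of the inequality directions across the three sub-ranges of $s$, the remark that $p\mapsto|a|_p$ is still monotone when $2s<1$ (where it is only a quasi-norm), and the observation that $v_s^{\bullet}(0)=1$ is what upgrades ``norm $\le1$'' to ``norm $=1$''.
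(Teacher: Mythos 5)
Your proof is correct and follows exactly the route the paper indicates (it gives only the one-line hint preceding the lemma): reduce to a coordinatewise comparison of the one-dimensional weights, order them via the monotonicity of $p\mapsto|a|_p$ for the vector $a=(1,|\ell|)$ with exponents $1,2,2s$, and handle (v) by $\ell^2\ge|\ell|$; your observation that $v_s^{\bullet}(0)=1$ upgrades the bound to norm exactly one is the right way to finish. Nothing is missing.
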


\noindent We also have embeddings where $H^m_{\mix}(\T)$ is involved. 

\begin{lem}\label{norm_two} Let $m \in \N$. 
Then for all $f \in H^m_{\mix}(\T)$ the following chain of inequalities holds.
\begin{equation}\label{ws-03}
\|\, f\, |H^m_{\mix}(\T)\|^* \le 
\|\, f\, |H^m_{\mix}(\T)\| \le  \|\, f \, |H^m_{\mix}(\T)\|^+ \le \left(\frac{2^m}{m+1}\right)^{d/2}
\|\, f\, |H^m_{\mix}(\T)\|
\end{equation}
\end{lem}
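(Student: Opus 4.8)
The plan is to reduce the whole chain \eqref{ws-03} to a pointwise comparison of the one‑dimensional weight factors underlying the three norms. By \eqref{natural}, \eqref{f91} and \eqref{wm(k)} we have $\|f|H^m_{\mix}(\T)\|^{2}=\sum_{k\in\Z}|c_k(f)|^2\prod_{j=1}^d v_m(k_j)^2$ with $v_m(\ell)^2=\sum_{\alpha=0}^m|\ell|^{2\alpha}$, while by \eqref{norm4} and \eqref{norm4.5} (specialized to $s=m$) we have $\|f|H^m_{\mix}(\T)\|^{*\,2}=\sum_k|c_k(f)|^2\prod_j(1+|k_j|^{2m})$ and $\|f|H^m_{\mix}(\T)\|^{+\,2}=\sum_k|c_k(f)|^2\prod_j(1+|k_j|^2)^m$. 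Since every factor occurring here is $\ge 1>0$, it suffices to prove for each $\ell\in\zz$ the chain
\[
1+|\ell|^{2m}\ \le\ \sum_{\alpha=0}^m|\ell|^{2\alpha}\ \le\ (1+|\ell|^2)^m\ \le\ \frac{2^m}{m+1}\sum_{\alpha=0}^m|\ell|^{2\alpha}\,;
\]
multiplying over $j=1,\dots,d$ (positive factors preserve inequalities) and summing against $|c_k(f)|^2$ then gives \eqref{ws-03}, with the last constant $\big(2^m/(m+1)\big)^{d/2}$. The first inequality is immediate, since $\sum_{\alpha=0}^m|\ell|^{2\alpha}$ contains the terms $\alpha=0$ and $\alpha=m$ plus further nonnegative ones; the second is the binomial expansion $(1+|\ell|^2)^m=\sum_{\alpha=0}^m\binom{m}{\alpha}|\ell|^{2\alpha}$ together with $\binom{m}{\alpha}\ge1$. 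These two steps already yield $\|f|H^m_{\mix}(\T)\|^{*}\le\|f|H^m_{\mix}(\T)\|\le\|f|H^m_{\mix}(\T)\|^{+}$.

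The third inequality is the one that needs an idea; I would prove it for every real $t:=|\ell|^2\ge 0$, in the form $(m+1)(1+t)^m\le 2^m\sum_{\alpha=0}^m t^\alpha$. Put $a:=\tfrac{2t}{1+t}$ and $b:=\tfrac{2}{1+t}$, so that $a,b\ge 0$, $a+b=2$, and $a^\alpha b^{m-\alpha}=2^m t^\alpha/(1+t)^m$; the claim then becomes $\sum_{\alpha=0}^m a^\alpha b^{m-\alpha}\ge m+1$ for all $a,b\ge 0$ with $a+b=2$. Writing $a=1+s$, $b=1-s$ with $s\in[-1,1]$ and using the identity $\sum_{\alpha=0}^m a^\alpha b^{m-\alpha}=\frac{a^{m+1}-b^{m+1}}{a-b}$ for $s\neq 0$, the binomial theorem gives $(1+s)^{m+1}-(1-s)^{m+1}=2\sum_{j=0}^{\lfloor m/2\rfloor}\binom{m+1}{2j+1}s^{2j+1}$, whence $\sum_{\alpha=0}^m a^\alpha b^{m-\alpha}=\sum_{j=0}^{\lfloor m/2\rfloor}\binom{m+1}{2j+1}s^{2j}\ge\binom{m+1}{1}=m+1$, the case $s=0$ being the equality $m+1=m+1$. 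This closes the pointwise chain, and hence the proof.

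The only genuine obstacle is this last estimate. A naive coefficient‑by‑coefficient comparison of $(1+|\ell|^2)^m=\sum_{\alpha}\binom{m}{\alpha}|\ell|^{2\alpha}$ with $\frac{2^m}{m+1}v_m(\ell)^2=\frac{2^m}{m+1}\sum_\alpha|\ell|^{2\alpha}$ fails, because the central binomial coefficient $\binom{m}{\lfloor m/2\rfloor}$ exceeds the average $2^m/(m+1)$; one really has to exploit the global structure of the two polynomials in $|\ell|^2$. The Cayley‑type substitution $a=1\pm s$ together with the odd‑part expansion of $(1+s)^{m+1}-(1-s)^{m+1}$ is, I expect, the cleanest device here. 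Everything else in the lemma is routine bookkeeping with nonnegative factors.
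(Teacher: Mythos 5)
Your proof is correct. The overall skeleton coincides with the paper's: both arguments reduce the chain to a comparison of the one--dimensional weight factors $1+|\ell|^{2m}$, $v_m(\ell)^2=\sum_{\alpha=0}^m|\ell|^{2\alpha}$ and $(1+|\ell|^2)^m$, and both identify the extremal constant $2^m/(m+1)$ at $|\ell|=1$. The difference lies in how the key scalar estimate is justified. The paper treats the third inequality as the computation of the embedding norm $\sup_{k\in\Z}\prod_{j=1}^d(1+|k_j|^2)^m/v_m(k_j)^2$ and settles the one--dimensional supremum by asserting that $f(x)=(1+x)^m/(1+x+\cdots+x^m)$ is decreasing on $[1,\infty)$, hence bounded there by $f(1)=2^m/(m+1)$; this monotonicity is stated without verification (it would need a short derivative computation or the symmetry $f(1/x)=f(x)$). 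You instead prove the equivalent polynomial inequality $(m+1)(1+t)^m\le 2^m\sum_{\alpha=0}^m t^\alpha$ for \emph{all} real $t\ge 0$ by the substitution $a=2t/(1+t)$, $b=2/(1+t)$ (so $a+b=2$) and the identity $\sum_{\alpha=0}^m a^\alpha b^{m-\alpha}=\sum_{j=0}^{\lfloor m/2\rfloor}\binom{m+1}{2j+1}s^{2j}\ge m+1$ with $a=1+s$, $b=1-s$; I checked the algebra and it is sound. This buys a fully elementary, calculus--free argument that handles $\ell=0$ in the same stroke and exhibits equality exactly at $t=1$ (i.e.\ $|k_j|=1$), which explains why the constant $\big(2^m/(m+1)\big)^{d/2}$ cannot be improved. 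Your side remark that a coefficient-by-coefficient comparison must fail, because the central binomial coefficient exceeds the average $2^m/(m+1)$, is also correct.
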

\bproof The first inequality in (\ref{ws-03}) is obvious. The second one is a consequence of 
\eqref{natural} and \eqref{f91}, together with 
the fact that $v_{m}(\ell)^2 \leq (1+|\ell|^2)^{m}$ for all $\ell\in \zz$ and $m\in \N$. For the third
inequality, it is enough to notice that
$$
    \|I_d:H^m_{\mix}(\T) \to H^{m,+}_{\mix}(\T)\|^2 
    = \sup\limits_{k\in \Z} \prod_{j=1}^d\frac{(1+|k_j|^2)^{m}}{v_m(k_j)}
    =\left(\sup\limits_{k\in \N} \frac{(1+k^2)^{m}}{1+k^2+...+k^{2m}}\right)^d\,
$$
and that the function
$f(x)=\frac{(1+x)^m}{1+x+...+x^m}$ is decreasing on $[1,\infty)$, hence $f(x)\ge f(1)=\frac{2^m}{m+1}$.
%
\eproof

The most convenient norm for our purposes is 
$\|\, \cdot \, |H^s_{\mix}(\T)\|^\#$.
In almost all combinatorial estimates given below we use this specific norm.
Afterwards, with some additional effort, the results are carried over to the less convenient but more important norms 
$\|\, \cdot \, |H^{s}_{\mix}(\T)\|^+$, $\|\, \cdot \, |H^s_{\mix}(\T)\|^*$ and  $\|\, \cdot \, |H^m_{\mix}(\T)\|$.


\subsection{Isotropic Sobolev spaces  on the $d$-torus}
\label{iso}

Let $m \in \N$.
Then the isotropic Sobolev space $H^ m (\T)$ is the collection of all $f\in L_2 (\T)$ such that all 
distributional derivatives $D^\alpha  f$ up to order $m$ belong to $L_2 (\T)$, i.e.,
$$
\| \, f \, |H^m (\T)\| := \Big(
\sum_{|\alpha|_1\le m} \| \, D^\alpha  f \, |L_2 (\T)\|^2\Big)^{1/2} < \infty \, .
$$
Fractional versions for $s>0$ can be introduced by using Fourier coefficients 
and the norm
$$
\| \, f \, |H^s (\T)\|^+  := 
 \, 
 \Big(\sum_{k \in \Z} \, |c_k (f)|^2 (1+ \sum_{j=1}^d  
|k_j|^{2})^s \Big)^{1/2} \, .
$$ 
Based on these norms it is easy to compare the isotropic Sobolev spaces
with the Sobolev spaces of dominating mixed smoothness.

\begin{lem}
 \label{embe}
 Let $s>0$. Then we have the chain of continuous embeddings
 \begin{equation}\label{ws-05}
 H^{sd} (\T) \hookrightarrow H^s_{\mix} (\T) \hookrightarrow H^s (\T)\,,
 \end{equation}
 and this is best possible, i.e., for all $\varepsilon >0$, 
 
 \[
  H^{sd -\varepsilon} (\T) \not \subset  H^s_{\mix} (\T)\not \subset  H^{s+\varepsilon} (\T)\,.
 \]

 \end{lem}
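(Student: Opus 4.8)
The plan is to reduce everything to pointwise comparisons of the Fourier weights, since all three spaces involved are defined by weighted $\ell_2$-conditions on the Fourier coefficients. Write $a=(1+|k_1|^2+\dots+|k_d|^2)$, $b=\prod_{j=1}^d(1+|k_j|^2)$, so that the $H^s$-norm corresponds to the weight $a^s$, the $H^{sd}$-norm to the weight $a^{sd}$, and (using the $+$-norm from Definition \ref{varnorms}(i), which is equivalent to any fixed norm on $H^s_\mix$) the $H^s_\mix$-norm to the weight $b^s$. Thus the two embeddings in \eqref{ws-05} will follow at once from the two pointwise inequalities $b \le a^d$ and $a \le b$, valid for all $k\in\Z$. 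The first is just the AM--GM-type estimate $\prod_{j=1}^d(1+|k_j|^2)\le\big(1+\sum_j|k_j|^2\big)^d$ (which one can see by expanding the product and bounding each term $\prod_{j\in J}|k_j|^2$ against the multinomial expansion of $a^d$, or simply by $1+t_j\le 1+\sum_i t_i$ for each factor is too crude — instead use that the product of numbers each $\le a$ whose "excess over $1$" sums to $a-1$ is $\le a^d$). The second, $1+\sum_j|k_j|^2\le\prod_j(1+|k_j|^2)$, follows by induction on $d$ from $1+x+y\le(1+x)(1+y)$ for $x,y\ge0$. Taking $s$-th powers ($s>0$) preserves both inequalities, which gives the norm estimates and hence the continuous embeddings.

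For the optimality (``best possible'') part I would exhibit, for each $\varepsilon>0$, explicit test functions realizing the two failures. For $H^s_\mix\not\subset H^{s+\varepsilon}$, take $f$ with Fourier coefficients supported on the coordinate axis, say $c_{k}(f)$ nonzero only for $k=(\ell,0,\dots,0)$ with a tail like $c_{(\ell,0,\dots,0)}(f)=\ell^{-s-1/2}(\log\ell)^{-1}$ for $\ell\ge2$: then $b^s\sim|\ell|^{2s}$ along this axis while $a^{s+\varepsilon}\sim|\ell|^{2(s+\varepsilon)}$, so $f\in H^s_\mix$ but $f\notin H^{s+\varepsilon}$ because the extra factor $|\ell|^{2\varepsilon}$ destroys summability (the borderline $|\ell|^{-1-2\varepsilon}$ is already summable, so one must push $\varepsilon$ down to $0$ — here the point is that for \emph{every} fixed $\varepsilon>0$ the weight ratio $a^{\varepsilon}$ is unbounded on the axis, so $H^s_\mix\not\hookrightarrow H^{s+\varepsilon}$; concretely choose the tail $\ell^{-s-1/2-\varepsilon/2}$, say, which lies in $H^s_\mix$ but not in $H^{s+\varepsilon}$ when $2\cdot(\varepsilon/2)<\varepsilon$... ). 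In fact the cleanest argument is non-constructive: $H^s_\mix\hookrightarrow H^{s+\varepsilon}$ would force $\sup_k a(k)^{s+\varepsilon}/b(k)^s<\infty$, which is violated along the axis $k=(\ell,0,\dots,0)$ since there $a(k)^{s+\varepsilon}/b(k)^s\sim|\ell|^{2\varepsilon}\to\infty$. Symmetrically, $H^{sd-\varepsilon}\hookrightarrow H^s_\mix$ would force $\sup_k b(k)^s/a(k)^{sd-\varepsilon}<\infty$, which fails along the diagonal $k=(\ell,\ell,\dots,\ell)$, where $b(k)^s=(1+\ell^2)^{sd}$ and $a(k)^{sd-\varepsilon}=(1+d\ell^2)^{sd-\varepsilon}$, so the ratio behaves like $|\ell|^{2\varepsilon}\to\infty$. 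These two unboundedness statements for the weight quotients are exactly the obstruction to continuous embedding between two weighted-$\ell_2$ sequence spaces, so they are equivalent to the claimed non-inclusions.

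The only mild subtlety — and the part worth stating carefully — is the passage from ``the weight quotient is unbounded'' to ``the embedding fails'': one uses that for diagonal operators on $\ell_2$ (here the identity viewed between two weighted $\ell_2$-spaces, which is the abstract setup recalled in Subsection \ref{parapp}) boundedness is equivalent to boundedness of the multiplier sequence, so unboundedness of $a^{s+\varepsilon}/b^s$ along a sequence of frequencies yields a sequence of normalized functions in $H^s_\mix$ whose $H^{s+\varepsilon}$-norms blow up, hence the non-inclusion of the spaces (not merely discontinuity of the formal identity). I expect this to be the main obstacle only in the sense of bookkeeping; the inequalities $a\le b\le a^d$ are elementary, and everything else is a direct translation. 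Finally I would remark that the same proof works verbatim for the $*$- and $\#$-norms of Definition \ref{varnorms}, since by Lemma \ref{norm_one} they are equivalent to $\|\cdot|H^s_\mix(\T)\|^+$ with $d$-dependent but finite constants, which do not affect the \emph{qualitative} (non-)embedding statements asserted here.
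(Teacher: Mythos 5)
Your proposal is correct and is essentially the argument the paper has in mind: the paper omits the proof as ``elementary'' and only records that the embeddings in \eqref{ws-05} have norm one, which is exactly what your pointwise weight inequalities $1+\sum_j|k_j|^2\le\prod_j(1+|k_j|^2)\le\bigl(1+\sum_j|k_j|^2\bigr)^d$ deliver, and your axis/diagonal frequencies are the natural witnesses for sharpness. Two small clean-ups: the simple bound $1+|k_j|^2\le 1+\sum_i|k_i|^2$ applied to each factor already gives the right-hand inequality (your ``too crude'' worry is unfounded, and your first explicit axis example with tail $\ell^{-s-1/2}(\log\ell)^{-1}$ does work as written), and for the non-inclusions \emph{as sets} the unboundedness of the weight quotient shows only that the formal identity is unbounded, so you should finish either by the closed graph theorem (set inclusion between these weighted $\ell_2$-type Banach spaces would force continuity) or by exhibiting a single function, e.g.\ your axis example for $H^s_{\mix}\not\subset H^{s+\varepsilon}$ and the analogous diagonal choice $|c_{(\ell,\ldots,\ell)}|^2=\ell^{-2sd+2\varepsilon-1}(\log\ell)^{-2}$ for $H^{sd-\varepsilon}\not\subset H^s_{\mix}$.
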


\bproof The proof is elementary, so we will omit the details.
However, it is of certain interest to note that the embedding operators in (\ref{ws-05}) are always of norm one, i.e. 
\[
\| I_d:H^{sd,+} (\T) \to  H^{s,+}_{\mix} (\T)\| =  \| I_d:H^{s,+}_{\mix} (\T) \to  H^{s,+} (\T)\| = 1
\]
for all $s>0$, and 
\[
\| I_d:H^{md} (\T) \to  H^{m}_{\mix} (\T)\| =  \| I_d:H^{m}_{\mix} (\T) \to  H^{m} (\T)\| = 1
\]
for all $m \in \N$.
\eproof

The mixed space $H^s_{\mix} (\T)$ is much closer to the space on the left-hand side in (\ref{ws-05})
than to the space on the right-hand side.
This is indicated by a short look at the behavior of the approximation numbers.
It is known, see, e.g., \cite[Chapter~2, Theorems~4.1 and 4.2]{T93b}, that
\begin{equation}\label{ws-07}
a_s(d)  \, n^{-s/d} \le  a_{n} (I_d: \, H^{s} (\T) \to L_2 (\T)) \le
 A_s(d)  \, n^{-s/d} \, , \qquad  n \in \N\, ,
\end{equation}
holds for all $n$ with constants $a_s(d)$ and $A_s(d)$, only depending on $d$ and $s$\,,
and hence 
\[ 
a_{n} (I_d: \, H^{sd} (\T) \to L_2 (\T)) \sim n^{-s}\, . 
\]
This coincides up to a logarithmic perturbation with the behavior of 
$a_{n} (I_d: \, H^{s}_{\mix} (\T) \to L_2 (\T))$, see  (\ref{ws-06}).
Roughly speaking, the mixed Sobolev spaces
$ H^{s}_{\mix} (\T) $ are much smaller than their isotropic counterparts  $H^{s} (\T)$.
The behavior of the associated approximation numbers is almost the same as in the 
one-dimensional isotropic case $H^s(\tor)$.
>From the very  beginning this has been a major motivation to consider spaces of dominating mixed smoothness 
in approximation theory as well as in the field of information based complexity (IBC).
We refer to Babenko \cite{Ba}, Mityagin \cite{Mi} and Smolyak \cite{Sm} for early contributions 
in the framework of approximation theory (these references are also of relevance with respect to (\ref{ws-06})).
More recent results may be found in Temlyakov's monograph \cite{T93b}.
The role of the spaces $H^s_{\mix} (\T)$ in IBC
is summarized in the recent series of books by Novak and Wo{\'z}niakowski
\cite{NoWo08,NoWo10,NoWo12}. Observe that in IBC the spaces are
sometimes called Korobov spaces, see, e.g., \cite[pp.~341]{NoWo08}.

\begin{rem}
 \rm
In \cite{KSU1} we gave a proof of (\ref{ws-07}) with explicit constants 
$a_s(d)$ and $A_s(d)$ for various equivalent norms.
\end{rem}


\subsection{Approximation numbers}
\label{parapp}


If $\tau = (\tau_n)_{n=1}^{\infty}$ is a sequence of real numbers with $\tau_1 \geq \tau_2 \geq ... \geq 0$\,,
we define the diagonal operator
$  D_{\tau}:\ell_2 \to \ell_2$ by $D_{\tau}(\xi) = (\tau_n \xi_n)_{n=1}^{\infty}$.
Recall the definition of the approximation numbers \eqref{0002} already given in the introduction.
The following fact concerning approximation numbers of diagonal operators is well-known,
see e.g. K\"onig \cite[Section 1.b]{Ko}, Pinkus \cite[Theorem IV.2.2]{Pin}, and  Novak and Wo{\'z}niakowski
\cite[Corollary 4.12]{NoWo08}. Comments on the history may be found in
Pietsch \cite[6.2.1.3]{Pi07}.
\begin{lem}\label{sing}
Let $\tau$ and $D_\tau$ be as above.
Then
\[
a_n (D_\tau:\, \ell_2 \to \ell_2) = \tau_n\, , \qquad n \in \N\, .
\]
\end{lem}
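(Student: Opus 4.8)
The plan is to establish the identity $a_n(D_\tau : \ell_2 \to \ell_2) = \tau_n$ by proving the two matching inequalities separately.

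\textbf{Upper bound.} First I would exhibit an explicit finite rank operator of rank $n-1$ achieving error $\tau_n$. Let $P_{n-1} : \ell_2 \to \ell_2$ denote the coordinate projection onto the first $n-1$ coordinates, i.e. $P_{n-1}(\xi) = (\xi_1,\dots,\xi_{n-1},0,0,\dots)$, and set $A := D_\tau P_{n-1}$. Then $\rank A \le n-1 < n$, and for $\|\xi|\ell_2\| \le 1$ we have $(D_\tau - A)\xi = (0,\dots,0,\tau_n\xi_n, \tau_{n+1}\xi_{n+1},\dots)$, so that $\|(D_\tau - A)\xi|\ell_2\|^2 = \sum_{j \ge n} \tau_j^2 |\xi_j|^2 \le \tau_n^2 \sum_{j\ge n}|\xi_j|^2 \le \tau_n^2$, using the monotonicity $\tau_n \ge \tau_{n+1} \ge \cdots \ge 0$. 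Taking the supremum and then the infimum over admissible $A$ gives $a_n(D_\tau) \le \tau_n$.

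\textbf{Lower bound.} This is the part requiring a genuine idea. Let $A$ be any operator with $\rank A < n$. Consider the restriction of $D_\tau - A$ to the $n$-dimensional subspace $E_n := \Span\{e_1,\dots,e_n\}$ spanned by the first $n$ standard basis vectors. Since $\dim A(E_n) \le \rank A \le n-1 < n = \dim E_n$, the kernel of $A$ restricted to $E_n$ is nontrivial, so there exists $x = \sum_{j=1}^n \xi_j e_j \in E_n$ with $\|x|\ell_2\| = 1$ and $Ax = 0$. Then $\|(D_\tau - A)x|\ell_2\| = \|D_\tau x|\ell_2\| = \big(\sum_{j=1}^n \tau_j^2 |\xi_j|^2\big)^{1/2} \ge \tau_n \big(\sum_{j=1}^n |\xi_j|^2\big)^{1/2} = \tau_n$, again by monotonicity. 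Hence $\|D_\tau - A : \ell_2 \to \ell_2\| \ge \tau_n$ for every $A$ of rank less than $n$, so $a_n(D_\tau) \ge \tau_n$.

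Combining the two bounds yields $a_n(D_\tau : \ell_2 \to \ell_2) = \tau_n$ for all $n \in \N$. I expect the main (and only) subtle point to be the dimension-counting argument in the lower bound — specifically, the observation that a rank-$(n-1)$ operator must annihilate some unit vector in any prescribed $n$-dimensional subspace, which is exactly where the hypothesis $\rank A < n$ is used; the rest is routine estimation exploiting the ordering of the $\tau_j$. Since this is a standard fact, one could alternatively just cite K\"onig \cite{Ko}, Pinkus \cite{Pin}, or Novak and Wo\'zniakowski \cite{NoWo08}, but the elementary argument above is short enough to include directly.
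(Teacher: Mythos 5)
Your proof is correct. The paper itself does not prove this lemma at all -- it simply records it as a well-known fact and cites K\"onig, Pinkus, and Novak--Wo\'zniakowski -- and your argument (truncation $A=D_\tau P_{n-1}$ for the upper bound, and the kernel/dimension-counting argument showing that any operator of rank less than $n$ annihilates a unit vector in $\Span\{e_1,\dots,e_n\}$ for the lower bound) is exactly the classical proof found in those references, with both inequalities carried out correctly.
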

\noindent Here the index set of $\ell_2$ is $\N$. We need a modification for arbitrary 
countable index sets $J$. Then the space $\ell_2(J)$ is the collection of all
$\xi = (\xi_j)_{j\in J}$ such that the norm
$$
    \|\xi|\ell_2(J)\|:=\Big(\sum\limits_{j\in J}|\xi_j|^2\Big)^{1/2}
$$
is finite. Let $w = (w_j)_{j\in J}$ with $w_j>0$ for all $j\in J$, and assume that for every $\delta>0$ there are
only finitely many $j\in J$ with $w_j\geq \delta$\,. Then the non-increasing rearrangement
$(\tau_n)_{n\in \N}$ of $(w_j)_{j\in J}$ exists, and $\lim_{n\to\infty} \tau_n = 0$.
Defining $D_w:\ell_2(J) \to \ell_2(J)$ by $D_w(\xi) = (w_j\xi_j)_{j\in J}$ for $\xi \in \ell_2(J)$, Lemma\
\ref{sing} gives
$$
  a_n(D_w:\ell_2(J)\to \ell_2(J)) = \tau_n\,.
$$
The preceding identity is scalable in the following sense.

\begin{lem}\label{scale} Let $J$ be a countable index set, let $w=(w_j)_{j\in J}$ and $(\tau_n)_{n\in \N}$ be as above. Then,
setting $w^s=(w_j^s)_{j\in J}$, one has for any $s>0$
$$
  a_n(D_{w^s}:\ell_2(J)\to \ell_2(J)) = a_n(D_{w}:\ell_2(J)\to \ell_2(J))^s = \tau^s_n\,.
$$

\end{lem}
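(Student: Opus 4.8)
The plan is to reduce the statement to the already-established Lemma \ref{sing} (via the non-increasing rearrangement) by a purely elementary observation: raising each diagonal entry to the power $s>0$ is the same operation as raising the entire non-increasing rearrangement to the power $s$. First I would note that since $x\mapsto x^s$ is strictly increasing on $[0,\infty)$ for $s>0$, the sequence $(w_j^s)_{j\in J}$ has the same ordering structure as $(w_j)_{j\in J}$; in particular, if $(\tau_n)_{n\in\N}$ is the non-increasing rearrangement of $(w_j)_{j\in J}$, then $(\tau_n^s)_{n\in\N}$ is non-increasing and is precisely the non-increasing rearrangement of $(w_j^s)_{j\in J}$. Here one should also check the summability-type hypothesis needed to apply the rearrangement machinery to $w^s$: for every $\delta>0$ there are only finitely many $j$ with $w_j^s\ge\delta$, because this is equivalent to $w_j\ge\delta^{1/s}$, and by assumption on $w$ only finitely many $j$ satisfy that. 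Hence $\lim_{n\to\infty}\tau_n^s=0$ as well.

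Given this, the argument is immediate. Applying the extension of Lemma \ref{sing} to arbitrary countable index sets (as spelled out just before the statement) to the weight sequence $w^s=(w_j^s)_{j\in J}$ yields
\[
  a_n(D_{w^s}:\ell_2(J)\to\ell_2(J)) \;=\; \sigma_n\,,
\]
where $(\sigma_n)_{n\in\N}$ is the non-increasing rearrangement of $(w_j^s)_{j\in J}$. By the observation above, $\sigma_n=\tau_n^s$ for all $n$. On the other hand, the same lemma applied to $w$ itself gives $a_n(D_w:\ell_2(J)\to\ell_2(J))=\tau_n$, so $\tau_n^s=a_n(D_w:\ell_2(J)\to\ell_2(J))^s$. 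Chaining these equalities gives exactly
\[
  a_n(D_{w^s}:\ell_2(J)\to\ell_2(J)) \;=\; a_n(D_w:\ell_2(J)\to\ell_2(J))^s \;=\; \tau_n^s\,,
\]
which is the claim.

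There is essentially no obstacle here; the only point requiring a line of care is verifying that the hypothesis ``for every $\delta>0$ only finitely many $j$ have weight $\ge\delta$'' is inherited by $w^s$, which as noted follows by substituting $\delta\mapsto\delta^{1/s}$. One could equally phrase the whole proof without invoking rearrangements twice, by simply saying that $D_{w^s}$ is unitarily equivalent (via the same permutation of coordinates that sorts $w$) to the diagonal operator with entries $(\tau_n^s)_{n\in\N}$ and then quoting Lemma \ref{sing} once; but the rearrangement formulation is cleaner given how the preceding paragraph of the paper is set up. Either way the proof is a couple of sentences.
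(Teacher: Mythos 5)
Your proof is correct and fills in exactly the reasoning the paper leaves implicit: the paper states Lemma \ref{scale} without proof, treating it as immediate from the monotonicity of $x\mapsto x^s$ (so the non-increasing rearrangement of $(w_j^s)_{j\in J}$ is $(\tau_n^s)_{n\in\N}$) combined with the countable-index-set version of Lemma \ref{sing}. Your additional check that the finiteness condition on the weights is inherited by $w^s$ is a sensible touch, and nothing in your argument deviates from the intended route.
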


Now we can reduce our problem on embedding operators in function spaces to the considerably simpler context of diagonal operators in sequence spaces, where index set is $J=\Z$\,. To this end, we consider the operators
$$
  A_s:H^{s,+}_{\mix}(\T) \to \ell_2(\Z)\qquad\mbox{and}\qquad B_s:\ell_2(\Z) \to H^{s,+}_{\mix}(\T)
$$
defined by
$$
   A_sf = (w_{s}^+(k)c_k(f))_{k\in \Z}\qquad\mbox{and}\qquad B_s\xi = (2\pi)^{-d/2}\sum\limits_{k\in \Z}
   \frac{\xi_k}{w_{s}^+(k)} e^{ikx}\,,
$$
where the weights are $w_{s}^+(k) := \prod_{j=1}^d\big(1+|k_j|^2\big)^{s/2}\,$.
Note the semigroup property of these weights, i.e., $w_{s}^+(k)\cdot w_{t}^+(k) = w_{s+t}^+(k)$.
Furthermore, we put for $k\in \Z$ 
$$w(k) := \frac{w_{s_1}^+(k)}{w_{s_0}^+(k)}$$
and make use of the associated diagonal operator $D_w$.
Then the following commutative diagram illustrates the situation
quite well in case $s_0>s_1 \geq 0$:

\tikzset{node distance=3cm, auto}

\begin{center}
\begin{tikzpicture}
  \node (H) {$H^{s_0,+}_{\mix}(\T)$};
  \node (L) [right of =H] {$H^{s_1,+}_{\mix}(\T)$};
  \node (ell) [below of=H] {$\ell_2(\Z)$};
  \node (ell2) [right of=ell] {$\ell_2(\Z)$};
  \draw[->] (H) to node {$I_d$} (L);
  \draw[->] (H) to node {$A_{s_0}$} (ell);
  \draw[->] (ell) to node {$D_w$} (ell2);
  \draw[->] (ell2) to node {$B_{s_1}$} (L);
\end{tikzpicture}
\end{center}

By the definition of the norm $\|\cdot|H^s_{\mix}(\T)\|^+$ it is clear
that $A_s$ and $B_s$ are isometries and $B_s = A_s^{-1}$. For the embedding
$I_d:H^{s_0,+}_{\mix}(\T) \to H^{s_1,+}_{\mix}(\T)$ if $s_0>s_1 \geq 0$ we obtain the factorization
\be\label{eq071}
    I_d =  B_{s_1} \circ D_w \circ A_{s_0}\,.
\ee
The multiplicativity of the approximation numbers applied to \eqref{eq071} implies
$$
  a_n(I_d) \leq \|A_{s_0}\|a_n(D_w)\|B_{s_1}\| = a_n(D_w) = \tau_n\,,
$$
where $(\tau_n)_{n=1}^{\infty}$ is the non-increasing rearrangement of
$(w(k))_{k\in \Z}$\,. The reverse inequality can be shown analogously. This gives the
important identity
\be\label{eq072}
    a_n(I_d) = a_n(D_w) = \tau_n\,.
\ee
Of course, \eqref{eq072} also holds for $I_d:H^{s_0,\#}_{\mix}(\T) \to H^{s_1,\#}_{\mix}(\T)$ and for
$I_d:H^{s_0,*}_{\mix}(\T) \to H^{s_1,*}_{\mix}(\T)$ with the obvious
adaption of the weights. Due to the semigroup property mentioned above and Lemma \ref{scale}
we have in particular the nice properties
\be\label{eq073}
  \begin{split}
    a_n(I_d:H^{s_0,+}_{\mix}(\T)\to H^{s_1,+}_{\mix}(\T)) &= a_n(I_d:H^{s_0-s_1,+}_{\mix}(\T)\to L_2(\T))\\
    &= a_n(I_d:H^{1,+}_{\mix}(\T)_{\mix}\to L_2(\T))^{s_0-s_1}\,
  \end{split}
 \ee
and
\be\nonumber
  \begin{split}
    a_n(I_d:H^{s_0,\#}_{\mix}(\T)\to H^{s_1,\#}_{\mix}(\T)) &= a_n(I_d:H^{s_0-s_1,\#}_{\mix}(\T)\to L_2(\T))\\
    &= a_n(I_d:H^{1,\#}_{\mix}(\T)\to L_2(\T))^{s_0-s_1}\,.
  \end{split}
\ee
For the norm $\|\cdot\|^{*}$ the corresponding weights are
$$
  w_{s}^*(k) = \prod\limits_{j=1}^d\big(1+|k_j|^{2s}\big)^{1/2}\,.
$$
Note, that they do not satisfy the semigroup property, whence a counterpart of \eqref{eq073} does not hold.


\section{Some combinatorics}
\label{combin}


In most considerations below, a crucial role will be played by
the cardinality $C(r,d)$ of the set
\[
 {\mathcal N}(r,d):=\Big\{ k \in \Z: \quad \prod\limits_{i=1}^d (1+|k_j|) \le r\Big\}\, , \qquad
r \in \N\, .
\]

\begin{lem}\label{ook}
For $r\in \N$ we have
\be\label{eq21}
  C(r,d) = 1 + \sum\limits_{\ell = 1}^{\min\{d,\log_2 r\}} 2^{\ell}\binom{d}{\ell}A(r,\ell)\,,
\ee
where $A(r,\ell) := \#\mathcal{M}(r,\ell)$ with
$$
    \mathcal{M}(r,\ell) = \Big\{k\in \N^\ell:\prod\limits_{j=1}^\ell (1+k_j) \leq r\}\,.
$$
\end{lem}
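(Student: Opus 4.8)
The plan is to classify the lattice points $k\in\Z$ in ${\mathcal N}(r,d)$ according to their \emph{support} $\supp k:=\{\,j:k_j\neq 0\,\}$. The point of this is that a vanishing coordinate contributes the trivial factor $1+|k_j|=1$ to the product $\prod_{j=1}^d(1+|k_j|)$, so only the nonzero coordinates matter for the constraint. For fixed support size $\ell$ these classes are pairwise disjoint, and their union over $0\le\ell\le d$ is all of ${\mathcal N}(r,d)$, so it suffices to count each class.

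First I would treat $\ell=0$: the only $k$ with empty support is $k=0$, which trivially satisfies the constraint and accounts for the leading $1$ in \eqref{eq21}. For $\ell\ge 1$, specifying a $k\in{\mathcal N}(r,d)$ with $\#\supp k=\ell$ amounts to three independent choices: the support, an $\ell$-element subset of $\{1,\dots,d\}$, giving $\binom{d}{\ell}$ possibilities; the signs $(\sign k_j)_{j\in\supp k}\in\{-1,+1\}^\ell$, giving $2^\ell$ possibilities; and the vector of absolute values $(|k_j|)_{j\in\supp k}$, which is an element of $\N^\ell$ (entries automatically $\ge 1$ since $\N=\{1,2,\dots\}$) and which, because all the remaining factors equal $1$, must satisfy $\prod_{j\in\supp k}(1+|k_j|)\le r$. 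By the symmetry of the product in its coordinates, the number of admissible absolute-value vectors does not depend on which $\ell$-subset was chosen; it equals $\#{\mathcal M}(r,\ell)=A(r,\ell)$. Multiplying the three counts and summing over $\ell\ge 1$ gives $C(r,d)=1+\sum_{\ell\ge 1}2^\ell\binom{d}{\ell}A(r,\ell)$.

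It then remains only to pin down the range of $\ell$ in the sum. Trivially $\ell\le d$. Moreover, each $j\in\supp k$ contributes a factor $1+|k_j|\ge 2$, so $2^\ell\le\prod_{j\in\supp k}(1+|k_j|)\le r$ forces $\ell\le\log_2 r$; since $\ell$ is an integer this is $\ell\le\lfloor\log_2 r\rfloor$, equivalently $A(r,\ell)=0$ for $\ell>\log_2 r$. Hence truncating the sum at $\min\{d,\log_2 r\}$ changes nothing, and \eqref{eq21} follows. There is essentially no obstacle here; the only point requiring (minor) care is to verify that the stratification by support is an honest partition of ${\mathcal N}(r,d)$ and that ${\mathcal M}(r,\ell)$ really records the absolute-value data independently of the chosen support — both immediate from the coordinate symmetry of the defining inequality.
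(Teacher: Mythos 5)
Your proposal is correct and follows essentially the same argument as the paper: stratify $\mathcal N(r,d)$ by the set of nonzero coordinates, obtaining the factor $\binom{d}{\ell}$ for the choice of support, $2^\ell$ for the signs, and $A(r,\ell)$ for the admissible absolute values, with the summand $1$ accounting for $k=0$ and the bound $2^\ell\le r$ explaining the truncation at $\min\{d,\log_2 r\}$. No issues.
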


\bproof
The proof is straightforward. The first summand $1$ in \eqref{eq21} represents the case 
$k_1= \ldots =k_d =0$.
Next we group together those vectors $k$ having exactly $\ell$ non-zero components.
This explains why the summation is running from $1$ to $\min\{d, \log_2 r\}$.
Of course, we may concentrate on those $k \in \Z$ with nonnegative components. 
Since we have $\ell$ non-zero components, this leads to the factor $2^\ell$.
Finally, the binomial coefficient $\binom{d}{\ell}$ is just the number of subsets of $\{1, \ldots,d\}$ of cardinality $\ell$.
\eproof

Later on we need estimates of the quantities $A(r,d)$ for all $r\in\N$.
Obviously we have $A(r,d)=0$ for $1\le r < 2^d$, and  $A(2^d,d)=1$.
We intend to relate the number $A(r,\ell)$ to the $\ell$-dimensional Lebesgue measure of the set
$$
    \mathcal{H}^{\ell}_r:=\Big\{x\in \re^{\ell}~:~x_j\geq 1, j=1,...,\ell, \prod\limits_{j=1}^\ell x_j \leq r\}
    \subset \re^{\ell}\,.
$$
Here, arbitrary real numbers $r>1$ are admitted.

\begin{rem}
 \rm
Of course, $\mathcal{H}^{\ell}_r$ is essentially the restriction of the hyperbolic cross with parameter $r$ to the first octant 
in $\re^\ell$. Knowing the classical approximation strategies with respect 
to the function spaces $H^s_{\mix} (\T)$\,, it is not a surprise that hyperbolic crosses show up here.
For easier reference we concentrate on dyadic hyperbolic crosses.
For $m,d \in \N$ let 
\[
H(m,d) :=
\Big\{k \in \Z:\: \exists u_1,...,u_d \in \N_0 \quad \mbox{s.t.}\quad
      |k_j| \leq 2^{u_j}\: \text{and}\:  \sum_{j=1}^d u_j = m\Big\}\,.
\]
Denote by 
\[
S_m f (x):= (2\pi)^{-d/2}\, \sum_{k \in H(m,d)} c_k (f)\, e^{ikx}\, , \qquad m \in \N\, , 
\]
the associated sequence of partial sums of the Fourier series.
Then $N(m) = \rank S_m \sim m^{d-1}\, 2^m$ and 
\begin{equation}\nonumber
\|I_d -S_m:H^{s}_{\mix} (\T) \to L_2 (\T)\| \sim a_{N(m)} (I_d: H^s_{\mix} (\T)\to L_2 (\T)) \sim N(m)^{-s}(\ln N(m))^{(d-1)s}\,.
\end{equation}
Here all constants behind $\sim$ are independent of $m \in \N$, but depending on $s$ and $d$, see
Bugrov \cite{Bu}, Nikol'skaya \cite{Nia}, Temlyakov \cite{T93b} and \cite{SU1,SU}.
\end{rem}

Let us return to \eqref{eq21}. Our next goal will be two-sided estimates for $A(r,\ell)$. Define the function $v_{\ell}(r):=\mbox{vol}_\ell(\mathcal{H}^{\ell}_r)$.

\begin{lem}\label{vol} Let $\ell,r \in \N$ and $r\geq 2^{\ell}$. Then we have\\
{\em (i)}
\be\label{eq23}
  2^{\ell}v_\ell(r/2^{\ell})\leq A(r,\ell) \leq v_\ell(r)
\ee
and {\em (ii)}
$$
    r\Big(\frac{(\ln r)^{\ell-1}}{(\ell-1)!}-\frac{(\ln r)^{\ell-2}}{(\ell-2)!}\Big)\leq v_{\ell}(r) 
\leq r\frac{(\ln r)^{\ell-1}}{(\ell-1)!}\,,\quad \ell = 2,3,....
$$
Moreover, the upper estimate in (ii) holds as well in case $\ell =1$.
\end{lem}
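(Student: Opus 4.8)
The plan is to handle the two parts in order, starting with the Lebesgue-measure estimate (ii) because (i) will be deduced from a direct volume-counting argument that only needs the definition of $v_\ell$.

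For part (ii), I would compute $v_\ell(r) = \mathrm{vol}_\ell(\mathcal{H}^\ell_r)$ by the substitution $x_j = e^{t_j}$, $t_j \ge 0$, which turns the constraint $\prod_{j=1}^\ell x_j \le r$ into $\sum_{j=1}^\ell t_j \le \ln r$ and introduces the Jacobian $e^{t_1+\cdots+t_\ell}$. Thus
\[
v_\ell(r) = \int_{\substack{t_j \ge 0\\ t_1+\cdots+t_\ell \le \ln r}} e^{t_1+\cdots+t_\ell}\, dt\,.
\]
Writing $L := \ln r$ and bounding $e^{t_1+\cdots+t_\ell} \le e^{L} = r$ on the simplex $\{t_j \ge 0, \sum t_j \le L\}$, whose $\ell$-dimensional volume is $L^\ell/\ell!$... wait, here one must be a little careful: the simplex $\sum t_j \le L$ in $\re^\ell$ has volume $L^\ell/\ell!$, which would give $v_\ell(r) \le r (\ln r)^\ell/\ell!$, not the claimed exponent $\ell-1$. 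The resolution is that the upper bound $e^{\sum t_j} \le r$ is wasteful; instead I integrate exactly. Peeling off the last variable, $v_\ell(r) = \int_0^L e^{u} \,\mathrm{vol}_{\ell-1}\big(\{t_j\ge 0: \sum_{j<\ell} t_j \le L-u\}\big)\,du = \int_0^L e^u \frac{(L-u)^{\ell-1}}{(\ell-1)!}\,du$. This is a standard integral; substituting $u \mapsto L-u$ gives $\frac{e^L}{(\ell-1)!}\int_0^L e^{-u} u^{\ell-1}\,du$, i.e.\ $v_\ell(r) = \frac{r}{(\ell-1)!}\,\gamma(\ell, \ln r)$ where $\gamma$ is the lower incomplete gamma function. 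The upper bound $v_\ell(r) \le \frac{r (\ln r)^{\ell-1}}{(\ell-1)!}$ follows from $\gamma(\ell, L) \le \int_0^\infty e^{-u} u^{\ell-1}\,du$... no, that gives $(\ell-1)!$. Rather, use $\gamma(\ell, L) = \int_0^L e^{-u}u^{\ell-1}du \le \int_0^L u^{\ell-1}du = L^\ell/\ell$; combined with $L^\ell/\ell \le L^{\ell-1}$ when $L \le \ell$... this is getting delicate, so in the actual writeup the cleanest route is the recursion: set $v_1(r) = r-1 \le r$ (direct: $\mathrm{vol}_1([1,r]) = r-1$), and then integrate by parts in $\int_0^L e^u \frac{(L-u)^{\ell-1}}{(\ell-1)!}du$ to get the exact recursion $v_\ell(r) = \frac{r (\ln r)^{\ell-1}}{(\ell-1)!} - v_{\ell-1}(r)$, which immediately yields both the upper bound $v_\ell(r) \le \frac{r(\ln r)^{\ell-1}}{(\ell-1)!}$ (since $v_{\ell-1} \ge 0$) and, applying it once more to $v_{\ell-1}$, the lower bound $v_\ell(r) \ge \frac{r(\ln r)^{\ell-1}}{(\ell-1)!} - \frac{r(\ln r)^{\ell-2}}{(\ell-2)!}$. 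The $\ell=1$ case of the upper bound is the remark $v_1(r) = r-1 \le r$.

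For part (i), I would compare the lattice-point count $A(r,\ell) = \#\{k \in \N^\ell : \prod_j(1+k_j) \le r\}$ with volumes by the usual box-packing argument. For the upper bound, to each $k \in \mathcal{M}(r,\ell)$ associate the unit cube $Q_k = k + [0,1)^\ell$; then for every $x \in Q_k$ one has $x_j \ge k_j \ge 1$ and $\prod_j x_j < \prod_j(1+k_j) \le r$, so $Q_k \subset \mathcal{H}^\ell_r$, and since the cubes are disjoint, $A(r,\ell) = \sum_k \mathrm{vol}_\ell(Q_k) \le v_\ell(r)$. For the lower bound, associate instead the cube $Q_k' = k + [-1,0)^\ell$ (i.e.\ $[k_j-1,k_j)$ in each coordinate); for $x \in Q_k'$ one has $x_j \ge k_j - 1 \ge 0$, which is too weak, so instead scale: to $k$ associate the scaled cube $k + [0,1)^\ell$ dilated appropriately, or more simply, observe that $\mathcal{H}^\ell_{r/2^\ell}$ is covered by the cubes $Q_k$ over $k \in \mathcal{M}(r,\ell)$ after the dilation $x \mapsto 2x$. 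Concretely: if $y \in \mathcal{H}^\ell_{r/2^\ell}$, set $k_j = \lceil y_j \rceil \ge 1$; then $k_j \le y_j + 1 \le 2 y_j$ (using $y_j \ge 1$), so $\prod_j(1+k_j) \le \prod_j 2k_j \le \prod_j 4 y_j$... the constant $4$ is off from the claimed $2$, so the right bookkeeping is $k_j = \lceil y_j\rceil \le 2 y_j$ hence $1 + k_j \le 1 + 2y_j \le \ldots$; in the writeup I will choose the dilation factor so that exactly $2^\ell v_\ell(r/2^\ell) \le A(r,\ell)$ comes out, tracking that $y \mapsto y/2$ maps $\mathcal{H}^\ell_r$ into a region where the ceiling lands in $\mathcal{M}(r,\ell)$ and the preimages of distinct lattice points are disjoint of volume $2^{-\ell}$ each.

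The main obstacle is the bookkeeping in part (i): getting the constant in the dilation exactly right so that the scaled hyperbolic cross $\mathcal{H}^\ell_{r/2^\ell}$ (and not $\mathcal{H}^\ell_{r/4^\ell}$ or similar) is the one that embeds into the union of unit cubes centered at the admissible lattice points. The rest — the substitution $x_j = e^{t_j}$ and the integration-by-parts recursion $v_\ell(r) = \frac{r(\ln r)^{\ell-1}}{(\ell-1)!} - v_{\ell-1}(r)$ — is routine once set up, and I would present it as a one-line induction rather than grinding through the incomplete gamma function.
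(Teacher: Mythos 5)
Two steps in your plan do not work as written. In part (ii), the identity you build on is false: when you peel off the last variable in $\int_{\{t_j\ge 0,\,t_1+\cdots+t_\ell\le L\}}e^{t_1+\cdots+t_\ell}\,dt$, the inner integral still carries the weight $e^{t_1+\cdots+t_{\ell-1}}$, so it equals $v_{\ell-1}(e^{L-u})$ and not the plain simplex volume $(L-u)^{\ell-1}/(\ell-1)!$. Hence $v_\ell(r)\ne\int_0^L e^u\tfrac{(L-u)^{\ell-1}}{(\ell-1)!}\,du$ (for $\ell=2$, $r=e$ the right-hand side is $e-2$ while $v_2(e)=1$), and likewise $v_\ell(r)\ne \tfrac{r}{(\ell-1)!}\gamma(\ell,\ln r)$. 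Worse, integrating your integral by parts gives $I_\ell=I_{\ell-1}-\tfrac{L^{\ell-1}}{(\ell-1)!}$, not the recursion you announce. The recursion $v_\ell(r)=\tfrac{r(\ln r)^{\ell-1}}{(\ell-1)!}-v_{\ell-1}(r)$ is in fact true, and it would indeed give both bounds of (ii) at once (upper bound since $v_{\ell-1}\ge0$, lower bound by applying the upper bound to $v_{\ell-1}$); but to prove it you must start from the correct representation $v_\ell(r)=\tfrac{1}{(\ell-1)!}\int_0^{\ln r}e^u u^{\ell-1}\,du$ --- obtained by pushing Lebesgue measure forward under $t\mapsto t_1+\cdots+t_\ell$, whose distribution function on the positive orthant is $u^\ell/\ell!$ --- and then integrate by parts. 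Alternatively, follow the paper, which uses the Fubini recursion $v_{\ell+1}(r)=r\int_1^r v_\ell(s)\,s^{-2}\,ds$ and proves the two bounds by induction on $\ell$.

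In part (i) your upper bound is fine (it is the paper's argument), but the lower bound --- which you yourself identify as the main obstacle --- is not established: both of your sketches (cubes $k+[-1,0)^\ell$, and the ceiling map $k_j=\lceil y_j\rceil$) lose a factor and would only yield something like $2^\ell v_\ell(r/4^\ell)\le A(r,\ell)$, and promising to ``choose the dilation so that $2^\ell v_\ell(r/2^\ell)$ comes out'' is not an argument. The correct bookkeeping uses the floor shifted by one: the set $S=\{x\in\re^\ell:\ x_j\ge2,\ \prod_j x_j\le r\}$ has volume $2^\ell v_\ell(r/2^\ell)$ (dilation $x=2y$), and for $x\in S$ the vector $k$ with $k_j:=\lfloor x_j\rfloor-1\ge1$ satisfies $1+k_j=\lfloor x_j\rfloor\le x_j$, hence $\prod_j(1+k_j)\le r$, so $k\in\mathcal{M}(r,\ell)$, while $x\in k+[1,2]^\ell$; since these shifted unit cubes have pairwise disjoint interiors, $\mathrm{vol}_\ell(S)\le A(r,\ell)$, which is exactly the claimed lower bound. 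Note that the cube with corner $\lfloor x\rfloor$ itself will not do: for $x=(2.5,2.3)$ and $r=6$ one has $x\in S$ but $(1+\lfloor x_1\rfloor)(1+\lfloor x_2\rfloor)=9>6$; the whole point is to arrange $1+k_j\le x_j$ rather than $1+k_j\le x_j+1$. With these two repairs your outline closes and is essentially the paper's proof (volume comparison for (i), a recursion plus induction for (ii)); as written, both the derivation of your recursion and the constant in (i) are genuinely broken.
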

\bproof For $k \in \N^{\ell}$ put $Q_k := k+[0,1]^{\ell}$. Then it holds
\be\label{eq22}
	      \Big\{x\in \re^{\ell}~:~x_j\geq 2, \prod\limits_{j=1}^\ell x_j \leq r\}
    \subset \bigcup\limits_{k\in \mathcal{M}(r,\ell)}Q_k \subset \Big\{x\in \re^{\ell}~:~x_j\geq 1, \prod\limits_{j=1}^\ell x_j \leq r\}\,.
\ee
Taking $\mbox{vol}_{\ell}$ in \eqref{eq22} together with a change of variable gives (i).

Let us prove (ii) by induction on $\ell$. We first define the function
$$
f_{\ell}(r) := r\frac{(\ln r)^{\ell-1}}{(\ell-1)!}
$$ 
and rewrite (ii) as
\be\label{eq25}
  f_{\ell}(r)-f_{\ell-1}(r) \leq v_{\ell}(r) \leq f_{\ell}(r)\,.
\ee
We consider the upper bound first. One easily verifies the right-hand side in \eqref{eq25} in case $\ell = 1$.
For $\ell\geq $ we use the recurrence relation
$$
    v_{\ell+1}(r) = \int_{1}^r v_{\ell}(r/t)\,dt\,,
$$
which is a consequence of Fubini's theorem. By a change of variable this can be rewritten as
\be\label{eq24}
  v_{\ell+1}(r) = r\int_{1}^r v_{\ell}(s)\,\frac{ds}{s^2}\,.
\ee
This implies
$$
  v_{\ell+1}(r) = r\int_{1}^r v_{\ell}(s)\,\frac{ds}{s^2} \leq r\int_{1}^r f_{\ell}(s)\frac{ds}{s^2} = f_{\ell+1}(r)\,.
$$
Indeed, the substitution $u=\ln s$ yields
\be\label{eq26}
  r\int_{1}^r f_{\ell}(s)\frac{ds}{s^2}=r\int_{1}^r \frac{s(\ln s)^{\ell-1}}{(\ell-1)!}\frac{ds}{s^2} = \frac{r}{(\ell-1)!}\int_{0}^{\ln r} u^{\ell-1}\,du = \frac{(\ln r)^{\ell}}{\ell!}=f_{\ell+1}(r)\,.
\ee
For the lower bound we first verify the left-hand side in \eqref{eq25} in case $\ell = 2$ by using  $v_2 (r)= r \ln r -r + 1$. 
The recurrence
relation \eqref{eq24} together with the induction hypothesis yields
$$
    v_{\ell+1}(r) = r\int_{1}^r v_{\ell}(s)\frac{ds}{s^2} \geq
    r\int_{1}^r \Big(f_{\ell}(s) - f_{\ell-1}(s)\Big)\, \frac{ds}{s^2} = f_{\ell+1}(r) - f_{\ell}(r)\,,
$$
where the last identity is a consequence of \eqref{eq26}\,. The proof is complete.
\eproof

\begin{rem} \rm In the recent preprint \cite{DiCh13}, D\~ung and Chernov
considered cardinalities and volumes of hyperbolic cross type 
sets in $\re^d$ similar to $\mathcal{H}^{\ell}_r$ above, see for instance (1.9), (1.10), Theorem\ 4.2, and Corollaries 4.3., 4.4, 4.5. However, for our purpose, i.e. the control of the numbers $C(r,d)$, see \eqref{eq21} above, the estimates presented here are more appropriate. 
\end{rem}


\section{Approximation numbers of Sobolev embeddings}
\label{number}


In this section we will compute, or at least estimate, the approximation numbers of the embedding 
$I_d: \, H^s_{\mix} (\T) \to L_2 (\T)$\,.
The main aim is to prove (\ref{ws-06}) with explicit constants 
$c_s(d)$ and $C_s (d)$.
First we deal with the norm $\|\, \cdot \, |H^s_\mix(\T)\|^\#$.


\subsection{The approximation numbers $a_n(I_d:\, H^{s,\#}_{\mix}(\T)\to L_2 (\T))$ for large $n$}
\label{numberlarge}

For $s>0$ we put
\[
 w_{s}^\#(k):= \prod\limits_{j=1}^d(1+ |k_j|)^{s}\, , \qquad k \in \Z \, .
\]
Due to Lemma \ref{sing} and \eqref{eq072} we have
\[
a_{n} (I_d: \, H^{s,\#}_{\mix} (\T) \to L_2 (\T)) = \sigma_n\, , \qquad n \in \N\, ,
\]
where $(\sigma_n)_{n\in\N}$ denotes the non-increasing rearrangement of $(1/w_{s}^\#(k))_{k \in \Z}$. We have
$$
\{\sigma_n: n\in\N\}=\{1/w_s^\#(k): k\in\Z\}=\{r^{-s}: r\in\N_0\}\,,
$$
that means $(\sigma_n)_{n\in\N}$ is a piecewise constant sequence. Recall the notation
\[
C(r,d)=\#\Big\{ k \in \Z: \quad \prod\limits_{i=1}^d (1+|k_j|) \le r\Big\}
\,=\,
\#\Big\{k \in \Z: \: w_{s}^\#(k)\le r^s \Big\}\, .
\]
These observations imply the following result.

\begin{lem}\label{an1}
Let $s>0$ and $r \in \N$. Then 
$$
a_{n} (I_d: \, H^{s,\#}_{\mix} (\T) \to L_2 (\T)) = r^{-s}\,, \qquad \mbox{if}
\quad C(r-1,d) < n \le  C(r,d)\, .
$$
\end{lem}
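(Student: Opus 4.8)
The plan is to combine the identification of the approximation numbers with diagonal operators, established in \eqref{eq072} and Lemma \ref{sing}, with the combinatorial bookkeeping of the weight values collected in Lemma \ref{ook}. By \eqref{eq072} we have $a_n(I_d:H^{s,\#}_{\mix}(\T)\to L_2(\T)) = \sigma_n$, where $(\sigma_n)_{n\in\N}$ is the non-increasing rearrangement of the sequence $(1/w_s^\#(k))_{k\in\Z}$ with $w_s^\#(k)=\prod_{j=1}^d(1+|k_j|)^s$. The key structural observation, already noted just before the statement, is that the weights $w_s^\#(k)$ take only the values $r^s$ with $r\in\N$ (since $\prod_j(1+|k_j|)$ is a positive integer), so the rearranged sequence $(\sigma_n)$ is piecewise constant with values $r^{-s}$, $r\in\N$.

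The main point is then to count multiplicities correctly. For fixed $r\in\N$ the number of indices $k\in\Z$ with $w_s^\#(k)\le r^s$ equals the number with $\prod_{j=1}^d(1+|k_j|)\le r$, which is exactly $C(r,d)$ by definition. Consequently, among the first $n$ entries of the non-increasing rearrangement, the value $r^{-s}$ is attained precisely for those $n$ satisfying $C(r-1,d)<n\le C(r,d)$: the inequality $n>C(r-1,d)$ says that we have exhausted all indices whose weight exceeds $r^s$ strictly (equivalently, whose reciprocal weight is $>r^{-s}$), i.e.\ whose product is $\le r-1$, while $n\le C(r,d)$ says we have not yet moved past the block of indices with product exactly $r$. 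Hence $\sigma_n=r^{-s}$ on exactly that range of $n$, which is the assertion.

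I would present this as a short argument: first recall $a_n=\sigma_n$ from \eqref{eq072}; then note $\{w_s^\#(k):k\in\Z\}=\{r^s:r\in\N\}$ and that $\#\{k:w_s^\#(k)\le r^s\}=C(r,d)$; finally translate these cardinalities into statements about the rearrangement, concluding $\sigma_n=r^{-s}$ whenever $C(r-1,d)<n\le C(r,d)$. There is essentially no obstacle here — it is a clean consequence of the diagonalization in Subsection \ref{parapp} together with Lemma \ref{sing}; the only point requiring a word of care is the off-by-one bookkeeping in the strict versus non-strict inequalities defining the block of multiplicity, and the fact that $C(0,d)=1$ (corresponding to $k=0$, i.e.\ $\sigma_1=1$) so that the formula is also valid for $r=1$.
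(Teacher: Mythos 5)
Your argument is correct and essentially identical to the paper's: identify $a_n(I_d:H^{s,\#}_{\mix}(\T)\to L_2(\T))$ with the non-increasing rearrangement of $\bigl(1/w_{s}^\#(k)\bigr)_{k\in\Z}$ via \eqref{eq072}, observe that the weights take only the values $r^s$, $r\in\N$, and count multiplicities through $C(r,d)$. One small slip in your boundary remark: $C(0,d)=0$ (no $k$ satisfies $\prod_{j}(1+|k_j|)\le 0$), while $k=0$ is counted in $C(1,d)=1$; it is this convention that makes the case $r=1$, $n=1$, $a_1=1$ come out right.
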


\begin{rem}
\rm Of course, without precise information on the behavior of the quantities $C(r,d)$, Lemma \ref{an1} is not very useful 
for practical purposes. But it provides, at least in 
principle, complete knowledge on the sequence of approximation numbers $a_n (I_d: \, H^{s,\#}_{\mix} (\T) \to L_2 (\T))$.
In particular, 
\begin{align*}\label{ws-08}
a_1 = 1 > &\, a_2 = \ldots = a_{2d+1} = 2^{-s} \\
> & \,a_{2d+2} = \ldots =a_{4d+1}=3^{-s}\\
 > & \,a_{4d+2}=\ldots = a_{2d^2+4d+1}=4^{-s}>\ldots \, .
\end{align*}

Furthermore, for any $n\in \N$, we can easily construct optimal algorithms $S_n$ of rank less than $n$.
If $C(r-1,d)< n \le  C(r,d)$,  we choose 
\[
S_n f (x):= \sum_{k \in \mathcal{N}(r-1,d)} c_k (f) \, e^{ikx}\, . 
\]
In fact, by this construction we get 
$$
    \sup\limits_{\|f|H^s_{\mix}(\T)\|^{\#}\leq 1}\|f-S_nf|L_2(\T)\| = r^{-s} = a_{n}(I_d: \, H^{s,\#}_{\mix} (\T) \to L_2 (\T))\,.
$$
\end{rem}

In a next step we determine the asymptotic behavior of the approximation numbers as $n\to \infty$, including the exact 
dependence on the smoothness parameter $s$ and the dimension $d$. Note that the {\it existence} of the limit in the 
following result is not at all obvious a priori.

\begin{satz}\label{lim1}
 Let $s>0$ and $d\in \N$. Then
 $$
 \lim_{n \to \infty} \, \frac{n^s\,a_{n} (I_d: \, H^{s,\#}_\mix (\T) \to L_2 (\T))}{(\ln n)^{(d-1)s} \, } =
 \Big[\frac{2^{d}}{(d-1)!}\Big]^s \, .
 $$
\end{satz}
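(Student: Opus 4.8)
The plan is to exploit the explicit description of the approximation numbers from Lemma~\ref{an1}: we know that $a_n = r^{-s}$ exactly when $C(r-1,d) < n \le C(r,d)$, where $C(r,d) = \#\mathcal{N}(r,d)$. Since the quantity $n^s a_n / (\ln n)^{(d-1)s}$ is, on each ``block'' $C(r-1,d) < n \le C(r,d)$, a decreasing function of $n$ (the numerator $n^s r^{-s}$ grows while $(\ln n)^{(d-1)s}$ grows too — one should check which effect dominates, but in any case the function is monotone on each block so its limit is controlled by the values at the block endpoints $n = C(r,d)$), it suffices to prove that
$$
\lim_{r\to\infty} \frac{C(r,d)^s \, r^{-s}}{(\ln C(r,d))^{(d-1)s}} = \Big[\frac{2^d}{(d-1)!}\Big]^s,
$$
i.e., after taking $s$-th roots, $C(r,d) \sim \frac{2^d}{(d-1)!}\, r\,(\ln r)^{d-1}$ as $r\to\infty$ with $d$ fixed, together with $\ln C(r,d) \sim \ln r$. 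The latter asymptotic for $\ln C(r,d)$ is immediate once we have the former, since $\ln\big(\frac{2^d}{(d-1)!} r (\ln r)^{d-1}\big) = \ln r + (d-1)\ln\ln r + O(1) \sim \ln r$.

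The core of the argument is therefore the asymptotics $C(r,d) \sim \frac{2^d}{(d-1)!}\, r\,(\ln r)^{d-1}$. First I would use Lemma~\ref{ook}, which writes
$$
C(r,d) = 1 + \sum_{\ell=1}^{\min\{d,\log_2 r\}} 2^\ell \binom{d}{\ell} A(r,\ell).
$$
For fixed $d$ and $r\to\infty$ the sum runs over all $\ell = 1,\dots,d$. Next I would invoke Lemma~\ref{vol}: part~(ii) gives $v_\ell(r) = \frac{r(\ln r)^{\ell-1}}{(\ell-1)!}(1 + o(1))$ as $r\to\infty$ (the lower bound $f_\ell(r) - f_{\ell-1}(r)$ differs from $f_\ell(r)$ only by a lower-order term since $f_{\ell-1}(r)/f_\ell(r) = (\ell-1)/\ln r \to 0$), and part~(i) sandwiches $A(r,\ell)$ between $2^\ell v_\ell(r/2^\ell)$ and $v_\ell(r)$. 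Since $v_\ell(r/2^\ell) = \frac{(r/2^\ell)(\ln r - \ell\ln 2)^{\ell-1}}{(\ell-1)!}(1+o(1)) = 2^{-\ell}\cdot\frac{r(\ln r)^{\ell-1}}{(\ell-1)!}(1+o(1))$, both sides of the sandwich are asymptotically $\frac{r(\ln r)^{\ell-1}}{(\ell-1)!}$, so $A(r,\ell) \sim \frac{r(\ln r)^{\ell-1}}{(\ell-1)!}$ for each fixed $\ell$. Plugging this into the sum, the dominant term is $\ell = d$ (highest power of $\ln r$), giving
$$
C(r,d) \sim 2^d\binom{d}{d} A(r,d) \sim 2^d \cdot \frac{r(\ln r)^{d-1}}{(d-1)!},
$$
which is exactly the claimed asymptotic; all terms with $\ell < d$ contribute only $O(r(\ln r)^{d-2})$.

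The main obstacle, and the place where care is needed, is the interchange between the block-wise constancy of $a_n$ and the limit: one must verify that the ratio $n^s a_n/(\ln n)^{(d-1)s}$ does not oscillate across block boundaries by more than a factor tending to $1$. Concretely, comparing the value at $n = C(r,d)$ (end of the $r$-block, where $a_n = r^{-s}$) with the value at $n = C(r,d)+1$ (start of the $(r+1)$-block, where $a_n = (r+1)^{-s}$), the ratio of the two is $\big(\frac{r+1}{r}\big)^s \cdot \big(\frac{\ln C(r,d)}{\ln(C(r,d)+1)}\big)^{(d-1)s} \to 1$; and within each block monotonicity plus the fact that $C(r,d)/C(r-1,d) \to 1$ (consecutive values of $C$ are asymptotically equal, by the asymptotic just proved) forces the fluctuation between the endpoints of a single block to vanish as well. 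Combining these, the lim sup and lim inf of $n^s a_n/(\ln n)^{(d-1)s}$ both equal $\lim_{r\to\infty} C(r,d)^s r^{-s}/(\ln C(r,d))^{(d-1)s} = [2^d/(d-1)!]^s$, completing the proof.
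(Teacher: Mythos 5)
Your proposal is correct and follows essentially the same route as the paper: reduce via Lemma~\ref{an1} to the asymptotics of $C(r,d)$, derive $C(r,d)\sim \frac{2^d}{(d-1)!}\,r(\ln r)^{d-1}$ and $\ln C(r,d)\sim\ln r$ from Lemmas~\ref{ook} and~\ref{vol}, and control the piecewise-constant block structure through the monotonicity of $n/(\ln n)^{d-1}$ (which is in fact \emph{increasing} for $n>e^{d-1}$, not decreasing as you first wrote, but your hedge covers this and nothing in the argument changes). The paper organizes the same ingredients slightly differently, carrying out separate $\limsup$ and $\liminf$ estimates with explicit two-sided bounds on $C(r,d)$ and $\ln C(r,d)$ instead of first stating the clean asymptotic equivalence, and it reduces to $s=1$ via Lemma~\ref{scale} where you simply take $s$-th powers.
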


\bproof  Fix $d\in\N$. By Lemma \ref{scale} it is enough to deal with the case $s=1$. 
For simplicity of notation we write 
$a_n:=a_{n} (I_d: \, H^{1,\#}_\mix (\T) \to L_2 (\T))$. We
have $a_n = 1/r$ if $r\in\N$ and $C(r-1,d) < n \leq C(r,d)$, see Lemma \ref{an1}. 
Clearly $\lim_{r\to\infty}C(r,d)=\infty$, moreover
the sequence $n (\ln n)^{-(d-1)}$ is increasing
for $n>e^{d-1}$. 
Hence we obtain for sufficiently large $r\in\N$ the two-sided inequality

\be\label{eq28}
\frac{C(r-1,d)}{r(\ln C(r-1,d))^{d-1}}\le 
\frac{n\,a_n}{(\ln n)^{d-1}} \le   \frac{C(r,d)}{r(\ln C(r,d))^{d-1}}\,.
\ee
By \eqref{eq21} and \eqref{eq23} we have for $r\ge 2^d$
\be
    C(r,d)\leq 1+\sum\limits_{\ell = 1}^d\binom{d}{\ell}2^\ell v_\ell(r)\, .
\ee
>From $C(r,d) \geq C(r,1)=2r-1 \geq r$ for all $r\in\N$, we get $\ln C(r,d)\ge \ln r$. 
Taking Lemma \ref{vol}/(ii) into account, we arrive at
$$
   \frac{C(r,d)}{r(\ln C(r,d))^{d-1}}\leq \frac{1+\sum\limits_{\ell = 1}^d\binom{d}{\ell}2^\ell r
\frac{(\ln r)^{\ell-1}}{(\ell-1)!}}{r(\ln r)^{d-1}} \xrightarrow[r\to\infty]{} \frac{2^d}{(d-1)!}\,,
$$
since only the last summand contributes to the limit.
Together with \eqref{eq28} this gives
$$
    \limsup\limits_{n\to\infty}\frac{n\,a_n}{(\ln n)^{d-1}} \leq \frac{2^d}{(d-1)!}\,.
$$
Now let us pass to the estimate from below. By \eqref{eq21} and Lemma \ref{vol}/(i),(ii) we have 
\be\label{eq27}
C(r,d) \geq 
2^d \, r\, \frac{(\ln r - d\ln 2)^{d-1}}{(d-1)!}\cdot\Big(1- \frac{d-1}{\ln r - d\ln 2}\Big)
\ee
for $r\ge 2^d$. Next we need a proper upper estimate for $\ln C(r,d)$.
In fact, if $r\ge e^{d-1}\geq e^{\ell}$ we have 
\[
\frac{(\ln r)^{\ell-1}}{(\ell-1)!} \le  \frac{(\ln r)^d}{d!}\, . 
\]
 Hence, using Stirling's formula, we can estimate
\be\label{ws-10}
    C(r,d) \le 1+  \sum\limits_{\ell = 1}^d \binom{d}{\ell}\, 2^\ell \, r\,  \frac{(\ln r)^{d}}{d!} = 3^d\, r\,\frac{(\ln r)^{d}}{d!}
\leq r \, \left(\frac{3e \ln r}{d}\right)^{d}\,.
\ee
This gives, for $r\ge e^{d-1}$\,, 
\be
\ln C(r,d)\le \ln r +d \ln\ln r +d\ln(3e/d)
\ee

Now let $C(r,d)<n\le C(r+1,d)$. Then we have $a_n=\frac 1{r+1}$, and
inserting the above inequalities in \eqref{eq27} yields  
\be\nonumber
    \begin{split}
        \frac{n a_n}{(\ln r)^{d-1}}&\ge\frac{C(r,d)}{(r+1)(\ln C(r,d))^{d-1}} \\
&\geq \frac{2^d}{(d-1)!} 
\cdot\frac{r}{r+1}\cdot\left(\frac{\ln r -d\ln 2}{\ln r +d \ln\ln r +d\ln(3e/d)}\right)^{d-1}
\cdot \left(1-\frac{d-1}{\ln r-d\ln 2}\right)\\
        &\xrightarrow[r\to\infty]{} \frac{2^d}{(d-1)!}\,.
    \end{split}
\ee
This implies
$$
    \liminf\limits_{n\to\infty}\frac{n\,a_n}{(\ln n)^{d-1}} \geq \frac{2^d}{(d-1)!}\,,
$$
and the proof is complete.
\eproof

\begin{rem}\rm 
(i) For $d=1$,  the mixed space $H^{s,\#}_{\mix} (\tor)$ coincides with the 
isotropic space $H^{s,\#}(\tor)$, with equality of the corresponding norms.
Then
$$
 \lim_{n \to \infty} \, n^s\,a_{n} (I_d: \, H^{s,\#} (\tor) \to L_2 (\tor)) =
 2^{s} 
 $$
follows, a result, already proved in \cite{KSU1}.
\\
(ii) As a consequence of Stirling's formula we observe that
$$
      \Big[\frac{2^{d}}{(d-1)!}\Big]^s \asymp \Big(\frac{d}{2\pi}\Big)^{s/2}\Big(\frac{2e}{d}\Big)^{ds}\,,
$$
where $a_n \asymp b_n$ means $\lim_{n\to \infty} a_n/b_n =1$. This shows
a super-exponential decay of the constant.
\end{rem}

\noindent 
Being interested in explicit constants $c_s (d), C_s (d)$ in (\ref{ws-06}), we can learn something from Theorem \ref{lim1}.
Fix $d\in\N$ and $s>0$. Then for any given $\varepsilon >0$ there exists $n_0=n_0(\varepsilon)\in \N $ such that
\[
\Big[\frac{2^d}{(d-1)!}\Big]^s - \varepsilon \le \frac{n^s\,a_n (I_d: \, H^{s,\#}_{\mix}(\T) \to L_2(\T))}{(\ln n)^{s(d-1)}} 
\le \Big[\frac{2^d}{(d-1)!}\Big]^s + \varepsilon\qquad\text{for all } n\ge n_0\,.
\]
Equivalently, for any given $n_1\in\N$ there is a constant $\lambda=\lambda (n_1)$, $1<\lambda <\infty$,
such that 
\[
\frac 1\lambda\Big[\frac{2^d}{(d-1)!}\Big]^s \le \frac{n^s\,a_n (I_d: \, H^{s,\#}_{\mix}(\T) \to L_2(\T))}{(\ln n)^{s(d-1)}} 
\le \lambda\Big[\frac{2^d}{(d-1)!}\Big]^s\,  \qquad \text{for all } n \ge n_1\, . 
\]
We now aim at controlling the constant $\lambda (n_1)$ 
for certain (large) values of $n_1$.

\begin{satz}\label{gut}
Let $s>0$ and  $d\in \N$.\\
(i) Then
 $$
  a_{n} (I_d: \, H^{s,\#}_{\mix} (\T) \to L_2 (\T)) \le
\Big[\frac{3^d}{(d-1)!}\Big]^s\,\frac{(\ln n)^{(d-1)s}}{n^s} \, \qquad \mbox{if}\quad n \ge  27^d \, .
 $$
 (ii) On the other hand,
 $$
  a_{n} (I_d: \, H^{s,\#}_{\mix} (\T) \to L_2 (\T)) \ge 
  \left[
  \frac{3}{d!}\, \left(\frac{2}{2 + \ln 12}\right)^{d}\right]^s\, \frac{(\ln n)^{(d-1)s}}{n^s} 
   \, 
\qquad 
\mbox{if}\quad n > (12\, e^2)^d \, .
 $$
\end{satz}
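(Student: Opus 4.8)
The plan is to use Lemma~\ref{scale} to strip off the smoothness parameter: since $a_n(I_d:H^{s,\#}_{\mix}(\T)\to L_2(\T))=\big(a_n(I_d:H^{1,\#}_{\mix}(\T)\to L_2(\T))\big)^s$, it suffices to prove both inequalities for $s=1$. I write $a_n:=a_n(I_d:H^{1,\#}_{\mix}(\T)\to L_2(\T))$, so that by Lemma~\ref{an1} one has $a_n=1/r$ exactly when $C(r-1,d)<n\le C(r,d)$. The map $t\mapsto t/(\ln t)^{d-1}$ is increasing on $[e^{d-1},\infty)$, both $27^d$ and $(12e^2)^d$ exceed $e^{d-1}$, and $a_n$ is constant on each interval $(C(r-1,d),C(r,d)]$; hence $n\,a_n/(\ln n)^{d-1}$ is non-decreasing on each such interval. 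This reduces (i) to proving $C(r,d)\le\tfrac{3^d}{(d-1)!}\,r\,(\ln C(r,d))^{d-1}$ for every $r$ with $C(r,d)\ge 27^d$, and (using in addition $n>C(r-1,d)$ and $\ln n\le\ln C(r,d)$ together with the monotonicity) reduces (ii) to proving $C(r-1,d)\ge c\,r\,(\ln C(r,d))^{d-1}$ for every $r$ with $C(r,d)>(12e^2)^d$, where $c:=\tfrac{3}{d!}\big(\tfrac{2}{2+\ln12}\big)^d=\tfrac{3\cdot2^d}{d!\,(\ln(12e^2))^d}$.

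For (i): combining \eqref{eq21}, \eqref{eq23} and the upper bound of Lemma~\ref{vol}(ii), and using $\tfrac{(d-1)!}{(\ell-1)!}\le(d-1)^{d-\ell}$ to sum the series, I would first obtain $C(r,d)\le 1+\tfrac{r(\ln r)^{d-1}}{(d-1)!}\big(2+\tfrac{d-1}{\ln r}\big)^d$ for $r\ge e^{d-1}$. Next, \eqref{ws-10} shows that $C(r,d)\ge27^d$ forces $r\ge e^{d-1}$ (for $r<e^{d-1}$ one bounds $C(r,d)\le C(\lceil e^{d-1}\rceil,d)\le 2e^{-1}(3e^2)^d<27^d$, as $3e^2<27$), hence $\tfrac{d-1}{\ln r}\le1$, so $\big(2+\tfrac{d-1}{\ln r}\big)^d\le3^d$ and $C(r,d)\le1+\tfrac{3^d}{(d-1)!}r(\ln r)^{d-1}$. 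Finally, $C(r,d)\ge C(r,1)=2r-1$ gives $\ln C(r,d)-\ln r\ge\ln\tfrac32$, so by convexity of $t\mapsto t^{d-1}$ the surplus $\tfrac{3^d}{(d-1)!}r\big[(\ln C(r,d))^{d-1}-(\ln r)^{d-1}\big]$ is at least $\tfrac{3^d\ln(3/2)}{(d-2)!}r(\ln r)^{d-2}\ge1$ for $d\ge2$ ($d=1$ being trivial since $2r-1\le3r$); this absorbs the additive $1$ and yields (i).

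For (ii): fix $r$ with $C(r,d)>(12e^2)^d$. By the argument used in (i) (now with $3e^2<12e^2$) this forces $r\ge e^{d-1}$, so the bound $C(r,d)\le r\big(\tfrac{3e\ln r}{d}\big)^d$ from \eqref{ws-10} combined with $C(r,d)>(12e^2)^d$ yields, after writing $t=\ln r/d$, the scalar inequality $t\,e^t>4e$; since $t\mapsto te^t$ is increasing, $t>t_0$ with $t_0e^{t_0}=4e$, so $t_0>1.79$ and in particular $\ln r>1.79\,d$ and $r-1\ge2^d$. I would then assemble three estimates: the lower bound \eqref{eq27} for $C(r-1,d)$, whose factor $\theta:=1-\tfrac{d-1}{\ln(r-1)-d\ln2}$ is now bounded below by an absolute positive constant because $\ln(r-1)-d\ln2$ exceeds a fixed multiple of $d$; the upper bound $\ln C(r,d)\le\ln r+d\ln\ln r-\ln d!+d\ln3$, which in the regime $t>t_0$ and via Stirling gives the key inequality $\ln C(r,d)\le(2+\ln12)\big(\ln(r-1)-d\ln2\big)$ (this is where the constants $12e^2$ and $2+\ln12=\ln(12e^2)$ are forced); and then, substituting the latter into the former, $C(r-1,d)\ge\tfrac{2^d\theta}{(d-1)!\,(2+\ln12)^{d-1}}(r-1)(\ln C(r,d))^{d-1}$, so it only remains to verify $\tfrac{r-1}{r}\,\theta\,d\,(2+\ln12)\ge3$, which is a routine check in the relevant range $r>e^{1.79d}$ since $\tfrac{r-1}{r}$ and $\theta$ are bounded away from $0$ (with $\theta$ even larger for small $d$) while $d(2+\ln12)\ge2+\ln12>3$. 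For $d=1$, where \eqref{eq27} is unavailable, one argues directly from $C(r,1)=2r-1$.

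The main difficulty is the third paragraph: the chain of estimates there is quantitatively tight, the numbers $12e^2$ and $2+\ln12$ being chosen precisely so that the final scalar inequality $\tfrac{r-1}{r}\,\theta\,d\,(2+\ln12)\ge3$ just closes. Thus the two auxiliary facts --- the key inequality $\ln C(r,d)\le(2+\ln12)(\ln(r-1)-d\ln2)$ and the uniform lower bound for $\theta$ --- have to be established carefully; both rely on the a priori gain $\ln r>1.79\,d$ (itself a consequence of $C(r,d)>(12e^2)^d$), so the order of the steps matters, and the Stirling-type estimates used to discard lower-order terms force a separate but entirely elementary treatment of the smallest values of $d$.
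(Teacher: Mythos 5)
Your proposal is correct and follows essentially the same route as the paper's proof: reduction to $s=1$ via Lemma~\ref{scale}, the identification $a_n=1/r$ on $(C(r-1,d),C(r,d)]$ from Lemma~\ref{an1}, monotonicity of $t/(\ln t)^{d-1}$, the counting bounds \eqref{eq21}, \eqref{eq23}, Lemma~\ref{vol}, \eqref{eq27}, \eqref{ws-10}, and the same numerical bookkeeping built around $2+\ln 12=\ln(12e^2)$. The only (harmless) deviations are organizational --- you absorb the additive $1$ in (i) via the gap $\ln C(r,d)\ge\ln r+\ln\tfrac32$ instead of folding it into the $\ell=0$ binomial term, and in (ii) you localize $r$ through $C(r,d)>(12e^2)^d$ (giving $\ln r>t_0 d$ with $t_0e^{t_0}=4e$) rather than starting from $r_0=(2e)^d$ and checking $C(r_0,d)\le(12e^2)^d$ as the paper does; your final scalar checks do close with comfortable margin.
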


\bproof Again, it is enough to deal with the case $s=1$.\\ For $r\in \N$ and $C(r-1,d) < n \leq C(r,d)$ we
have $a_n = 1/r$\,, whence $a_n \le 1$ for all $n$. 
\\\newline
{\em Step 1.} Proof of (i). First recall that $C(r,d) \geq C(r,1) \geq r$ (see the previous proof), and that $n/(\ln n)^{d-1}$ is increasing for $n >e^{d-1}$.
Similarly as above in (\ref{ws-10}) we have, for all $n >e^{d-1}$,
\[
\frac{n\,  a_n}{(\ln n)^{d-1}} \leq \frac{C(r,d)}{r(\ln C(r,d))^{d-1}}
\leq \frac{1+\sum\limits_{\ell = 1}^{d} 2^{\ell} \binom{d}{\ell} v_{\ell}(r)}{r(\ln r)^{d-1}}
\leq \frac{1 + \sum\limits_{\ell = 1}^{d} 2^{\ell}\,  \binom{d}{\ell} \, \frac{(\ln r)^{\ell-1}}{(\ell-1)!}}{(\ln r)^{d-1}}\,.
\]
Since $\frac{(\ln r)^{\ell-1}}{(\ell-1)!}\le \frac{(\ln r)^{d-1}}{(d-1)!}$\,, we have
$$
    \sup\limits_{n \ge C(e^d,d)} \frac{n\, a_n}{(\ln n)^{d-1}} \leq \frac{1}{(d-1)!} \sum\limits_{\ell = 0}^d 2^{\ell} \binom{d}{\ell} = \frac{3^d}{(d-1)!}\,.
$$
Next we give a precise range for $n$ in which this estimate holds. To this end, we estimate $C(r,d)$ with $r=e^d$ from above.
The obvious inequality $x^k/k!\le e^x$ applied to $x=\ln r=d$ gives
\be\label{eq30}
       C(e^d,d) \leq  r + \sum\limits_{\ell = 1}^d 2^{\ell} \binom{d}{\ell} r\frac{(\ln r)^{\ell-1}}{(\ell-1)!} \leq 3^d\,r^2= (3e^2)^d\,.
\ee
This shows that (i) holds for all $n>(3e^2)^d$. 

Finally we show that for $n\ge 27^d$ the upper bound in (i) is non-trivial (i.e $<1$). 
To see this, we use $(\ln n)^{d-1}/(d-1)!\le (\ln n)^d/d!$ for $n >e^d$, recall that 
the function $f_d (t) = t^{-1}\, (\ln t)^{d-1}$ is decreasing on $[e^{d-1}, \infty)$.
Applying Stirling's formula and these monotonicity assertions, estimate (i) yields 
$$
a_n\le \frac{(3\ln n)^d}{d!\,n}\le\left(\frac{3ed\ln 27}{27d}\right)^d=\left(\frac{e\ln 3}{3}\right)^d\,. 
$$
Since $(e\ln 3)/3=0.99544...<1$, we see that indeed $a_n<1$.
\\
\newline
{\em Step 2.} Let us turn to the estimate from below. 
Arguing as in (\ref{eq30}) we find
\be\label{ws-25}
\ln C(r,d) \le \ln (3^d\,r^2) \, , \qquad r \ge e^d\, .
\ee
Next we estimate of $C(r,d)$ from below. 
We start with formula \eqref{eq27}
$$
C(r,d) \geq 
\frac{2^d \, r}{(d-1)!}\, \left(\ln (r/2^{d}) \right)^{d-1} \, \left(1- \frac{d-1}{\ln (r/2^{d})}\right)\, .
$$
For $r\ge r_0 := (2e)^d$ and $C(r,d)< n \le C(r+1,d)$, using again the monotonicity of $f_d$, this implies
\begin{eqnarray}\label{t-2}
 \frac{n \, a_n}{(\ln n)^{d-1}} & \ge & \frac{C(r,d)}{(r+1)\, (\ln C(r,d))^{d-1}}
\\
& \ge & \frac{2^d}{(d-1)!} \cdot \frac{r}{r+1} \cdot \left(\frac{\ln (r/2^{d})}{\ln (3^d\, r^2)}\right)^{d-1}
\cdot \left(1- \frac{d-1}{\ln (r/2^d)}\right)\, .
\nonumber
\end{eqnarray}
Concerning the different factors we have, for all $r\ge r_0$\,,
\beqq
\frac{r}{r+1} & \ge & \frac{5^d}{5^d + 1} \ge \frac 56 \, ,\quad\text{since } 2e>5\,,
\\
\frac{\ln (r/2^{d})}{\ln (3^d\, r^2)} & \ge & \frac{\ln (e^d)}{\ln (12^d \, e^{2d})} = 
\frac{1}{2 + \ln 12} 
\,
\\
1- \frac{d-1}{\ln (r/2^d)} &\ge& 1-\frac{d-1}{d} = \frac 1d\, , \\
\eeqq
Hence, taking $\frac{5}{6}\,(2+\ln 12)\ge 3$ into account, we arrive at 
\[
\sup_{n \ge C(r_0,d)}\, \frac{n \, a_n}{(\ln n)^{d-1}} \ge \frac{3}{d!}\,\left(\frac{2}{2 +\ln 12}\right)^d \, .
\]
Since $C(r_0,d) \le 3^d \, r_0^2 = (12 \, e^2)^d$, the proof of (ii) is finished.
\eproof

\begin{rem}
 \rm
(i) We can improve on the bound in (ii), if we choose a larger value of $r_0$. But then the range of $n$ for which (ii) holds becomes smaller.
Here is an example. Taking $r_1 := 48^{d/2}>r_0$, we get
\[
\frac{\ln (r_1/2^d)}{\ln (3^d\, r^2_1)} = \frac 14 \, , \quad 
\frac{r}{r+1}  \ge \frac{\sqrt{48}}{\sqrt{48}+1}\ge \frac{41}{47}\, 
\quad \mbox{and}\quad 1- \frac{d-1}{\ln (r/2^d)} \ge 1-\frac{1}{\ln\sqrt{12}}= 0.766173...\ge \frac 34\, .
\]   
Since $\frac{41}{47}\cdot \frac 43 >1$, we obtain
$$
  a_{n} (I_d: \, H^{s,\#}_{\mix} (\T) \to L_2 (\T)) \ge \left[\frac{3^d(\ln n)^{d-1}}{2^d\,(d-1)!\,n}\right]^s
\qquad 
\mbox{if}\quad n > (48)^{d/2} \, .
$$
(ii) Conversely, one can extend the range of $n$ in (ii) by making $r_0$ smaller. However, this strategy is limited by our method.
Indeed, if $r \le 2^d \, e^{d-1}$, then for the last factor in \eqref{t-2} we have 
\[
1- \frac{d-1}{\ln (r/2^d)} \le 1- \frac{d-1}{\ln (e^{d-1})} =0
\]
and our estimate \eqref{t-2} becomes useless.
\end{rem}


\subsection*{Some ``local'' improvements}


We do not claim that the estimates obtained in Theorem \ref{gut} are optimal in $d$ and $n$. 
They can be improved in various ways. But these improvements take place only locally, i.e., 
for $n$ taken from a finite interval. 
\\
Let $d\in\N$, and let $(\sigma_n)_{n\in\N}$ be the non-increasing rearrangement of 
$\left(1/w_{s}^\# (k)\right)_{k\in\Z}$\,.
Now we estimate $\sigma_n$
by a tensor trick. This method is very simple and works for any $d\in\N$. The best result that can be obtained 
in this way differs by a log-factor from the exact asymptotic order of $\sigma_n$. However, since the resulting 
constants are quite explicit, it improves on Theorem\ \ref{gut}, (i)
if $15^d < n < \exp(\sqrt{d/(2\pi)}\cdot 1.5^d)$, see Remark \ref{localimp} below. 

\begin{lem}\label{moderate}
For every $d\in \N$, every $s>0$ and all $n\ge 15^d$ it holds
\be\label{ws-13}
a_n(I_d: H^{s,\#}_{\mix}(\T)\to L_2(\T)) \le \frac{1}{n^s} \left(\frac{2e\ln n}{d}\right)^{sd}\,.
\ee
\end{lem}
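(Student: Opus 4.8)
The plan is to exploit the tensor product structure of the problem. For $d=1$, the weights $1/w_s^{\#}(k) = (1+|k|)^{-s}$ rearrange into the sequence $\sigma_n^{(1)}$ with $\sigma_1^{(1)}=1$ and $\sigma_n^{(1)} = \lceil (n+1)/2\rceil^{-s}$ for $n\ge 2$, since each value $r^{-s}$ ($r\ge 2$) is attained exactly twice (at $k=\pm(r-1)$). Hence $\sigma_n^{(1)} \le (2/n)^s$ for all $n$. In the multivariate case the weight sequence $(1/w_s^{\#}(k))_{k\in\Z}$ is the $d$-fold tensor product of the univariate one, so by the standard tensorization bound for approximation numbers of diagonal operators (equivalently, for singular numbers), one has for any factorization $n \le m^d$ the estimate $\sigma_n \le \prod_{j=1}^d \sigma_{m}^{(1)} \le (2/m)^{sd}$. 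More precisely, I would invoke the elementary fact that if $D = D^{(1)}\otimes\cdots\otimes D^{(1)}$ ($d$ factors), then $a_{(m-1)d+1}(D) \le a_m(D^{(1)})^d$, which follows because the rank of a tensor product of operators of ranks $r_1,\dots,r_d$ is $\prod r_j$, so choosing each factor of rank $< m$ produces an operator of rank $< m^d$ approximating $D$ in operator norm by $\le \prod_j a_m(D^{(1)})$; adjusting the index by the crude bound $n \le m^d$ whenever $m = \lceil n^{1/d}\rceil$ (or similar) yields $\sigma_n \le (2/n^{1/d})^{sd} = 2^{sd} n^{-s}$ — but this is worse than the claimed bound and must be sharpened.

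The key refinement is to not tensorize into $d$ equal blocks of the full univariate sequence, but to use a truncated univariate operator together with the sharper asymptotic $\sigma_n^{(1)}\sim (2/n)^s$ in a way that produces the logarithmic gain. The cleanest route: one uses the identity $a_n(D_w) = \tau_n$ from \eqref{eq072} and the characterization in Lemma \ref{an1}, namely $\sigma_n = r^{-s}$ whenever $C(r-1,d) < n \le C(r,d)$, combined with the \emph{lower} bound on $C(r,d)$ from \eqref{eq27} — but turned around. Actually, the tensor trick gives $\sigma_n \le \inf\{ m^{-s} : N^{(1)}(\cdot)^{\otimes d}\text{-count} \ge n\}$, and the relevant counting function for the tensor power is $C(r,d)$ itself. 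So the statement reduces to: if $n > 15^d$ then $C(r,d) \ge n$ forces $r^{-s} \le n^{-s}(2e\ln n/d)^{sd}$, i.e. we need $r \ge d \cdot n^{1/d}/(2e\ln n)$ — no wait, we need $r$ \emph{small}, so we need the \emph{lower} bound $C(r,d) \ge$ something, to conclude that the smallest $r$ with $C(r,d)\ge n$ is not too large. From \eqref{ws-10}, $C(r,d) \le r(3e\ln r/d)^d$ for $r \ge e^{d-1}$; inverting this monotone relation: if $n \le C(r,d)$ then... this again goes the wrong way. The correct move is the \emph{lower} estimate: I would show $C(r,d) \ge r(\ln r /d)^d \cdot c^d$ type bound, so that taking $r = n^{1/d}$ (roughly) already gives $C(r,d) \ge n$, hence $\sigma_n \le r^{-s}$ with such $r$, and the log factor $(\ln r)^d \approx (\ln n)^d/d^d$ appears in the denominator of $C$, i.e. as a numerator correction $(d/\ln n)^{-d}$ in the bound for $\sigma_n$.

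The main obstacle, and the step I expect to require the most care, is pinning down the correct truncation level and the precise constant $2e$: I would set $r := \big\lceil n^{1/d}\, \tfrac{2e\ln n}{d}\big\rceil$ (or the analogous expression), verify via the upper bound \eqref{ws-10} — applied in the form $C(r,d) \le r(3e\ln r/d)^d$ — that $C(r,d) \ge n$ actually fails to hold and instead use the \emph{lower} bound \eqref{eq27}: $C(r,d) \ge \tfrac{2^d r}{(d-1)!}(\ln(r/2^d))^{d-1}(1 - \tfrac{d-1}{\ln(r/2^d)})$, and check that with this choice of $r$ the right-hand side is $\ge n$, using Stirling for $(d-1)!$ and the constraint $n \ge 15^d$ to control the lower-order terms (in particular to ensure $\ln(r/2^d) \ge d$ so the last parenthesis is $\ge 1 - (d-1)/d = 1/d$, and to absorb the factor $1/d$ and the gap between $2^d/(d-1)!$ via Stirling $\sim (d/2\pi)^{1/2}(2e/d)^d$ into the constant $2e$ versus the available room). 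Once $C(r,d) \ge n$ is established, Lemma \ref{an1} gives $a_n \le r^{-s}$, and substituting the value of $r$ and raising to the power $s$ yields \eqref{ws-13}. The threshold $15^d$ is presumably exactly what is needed to make all the Stirling and lower-order-term estimates close with the clean constant $2e$; getting that bookkeeping right is the crux.
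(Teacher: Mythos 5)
Your proposal breaks down at its crux: the inference from the counting function goes in the wrong direction. Since $C(r,d)$ is exactly the number of $k\in\Z$ with $1/w_s^{\#}(k)\ge r^{-s}$, the inequality $n\le C(r,d)$ says that the $n$-th largest weight is still $\ge r^{-s}$, i.e.\ it yields the \emph{lower} bound $a_n\ge r^{-s}$ --- this is precisely how the paper uses the lower estimate \eqref{eq27} in Theorem~\ref{gut}(ii) and Theorem~\ref{lowersmall}. To obtain an upper bound $a_n\le r^{-s}$ from Lemma~\ref{an1} you need $C(r-1,d)<n$, hence an \emph{upper} estimate of the counting function; so your concluding step ``once $C(r,d)\ge n$ is established, Lemma~\ref{an1} gives $a_n\le r^{-s}$'', built on the lower bound \eqref{eq27}, proves the wrong inequality. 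Moreover, even after reversing the direction the route through $C(r,d)$ does not reach the stated result: to get $a_n\le n^{-s}(2e\ln n/d)^{sd}$ you would have to verify $C(r-1,d)<n$ for $r\approx n\,(d/(2e\ln n))^{d}$, and in the range $n\ge 15^d$ this $r$ is only of size about $1.02^d$ (at $n=15^d$), so $\ln r\ll d$ and the hyperbolic-cross volume estimates behind \eqref{ws-10} and \eqref{eq27} (which need $\ln r\gtrsim d$ and are dominated by the $\ell=d$ term only for much larger $r$) are not available; the crude bound \eqref{ws-10} merely reproduces Theorem~\ref{gut}(i) with the constant $3e$ and a larger threshold. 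Finally, the ``standard tensorization bound'' $a_{(m-1)d+1}(D^{\otimes d})\le a_m(D^{(1)})^d$ invoked in your first paragraph is false: the error of $A^{\otimes d}$ against $D^{\otimes d}$ is not the product of the univariate errors (telescoping only gives a bound of order $d\,\|D\|^{d-1}\|D-A\|$), and for the weights at hand the claimed inequality already fails for $d=2$, where the $(2m-1)$-st product singular number is of order $(\ln m)/m$ rather than $m^{-2}$. You discard this bound anyway, but it should not be cited as standard.

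For comparison, the paper's proof avoids all counting. For any $p>1$ one has $n\sigma_n^p\le\sum_{j}\sigma_j^p=\bigl(1+2\sum_{m\ge2}m^{-p}\bigr)^d\le\bigl(\tfrac{p+1}{p-1}\bigr)^d$, hence $\sigma_n\le n^{-1/p}\bigl(\tfrac{p+1}{p-1}\bigr)^{d}$; choosing $p$ so that $\tfrac{p+1}{p-1}=\tfrac{2\ln n}{d}$ (possible since $n\ge 15^d>e^{d/2}$) and estimating $n^{-1/p}\le e^d/n$ gives \eqref{ws-13}, the threshold $15^d$ serving only to make the bound nontrivial. The constant $2e$ comes from this $\ell_p$-summation argument and is not recoverable from the counting estimates of Section~\ref{combin} in the moderate range of $n$.
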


\begin{proof}
Again we concentrate on $s=1$.
For arbitrary $p>1$ we have
\begin{align*}
n \sigma_n^p&\le \sum_{j=1}^n \sigma_j^p\le \sum_{j=1}^\infty \sigma_j^p =\sum_{k\in\Z} 
\prod_{\ell =1}^d \, \frac{1}{(1+|k_\ell|)^p}= \left( \sum_{m\in\zz} \frac{1}{(1+|m|)^p} \right)^d
=\left(1+2\sum_{m=2}^\infty \frac{1}{m^p}\right)^d\\
&\le \left(1+2\int_1^\infty\frac{dx}{x^p}  \right)^d=\left(1+\frac{2}{p-1} \right)^d=\left(\frac{p+1}{p-1} \right)^d\,,
\end{align*}
which implies 
$$
\sigma_n\le n^{-1/p}\left(\frac{p+1}{p-1} \right)^{d/p}\le  n^{-1/p}\left(\frac{p+1}{p-1} \right)^{d}\,.
$$
Now we optimize, for given $n\in\N$, over the free parameter $p>1$. 
Note that the map $p\mapsto \frac{p+1}{p-1}$ is a bijection from the interval $(1,\infty)$ onto itself. If $n>e^{d/2}$, 
we have  $(2\ln n)/d>1$, and so we can choose $p>1$ such that
\be\label{ws-31}
\frac{p+1}{p-1}=\frac{2\ln n}{d}\,.
\ee
It remains to estimate the exponent in $n^{-1/p}$. We have
$$
-\frac 1p=-1+\frac{p-1}{p+1}\cdot \frac{p+1}{p}\le -1+\frac{d}{\ln n}\,,
$$
and hence $$n^{-1/p}\le n^{-1}\cdot e^d\,.$$ 
This implies the desired estimate
$$
\sigma_n\le \frac 1n\left(\frac{2e\ln n}{d}\right)^d\qquad\text{for all } n>e^{d/2}\,.
$$
This bound is non-trivial (i.e. $<1$) for $n\ge 15^d$. 
\end{proof}

\begin{rem}\label{localimp}
 \rm
(i) In our  Theorem\ \ref{gut} we got the upper bound (in slightly rewritten form)
$$
a_n(I_d: H^{s,\#}_{\mix}(\T)\to L_2(\T)) \le \frac{1}{n^s\ln^s n}\, \cdot\, \Big(\sqrt{\frac{d}{2\pi}}\, \frac{(3e\ln n)^d}{d^d}\Big)^s \, .
$$
This bound is larger than the bound obtained in (\ref{ws-13}), if and only if 
$$n\le \exp(\sqrt{d/2\pi}\cdot 1.5^d)$$ 
which is doubly exponential in $d$, that
means far beyond all $n$ in `real life' applications or in numerical analysis.
So the tensor trick might after all be quite useful, although it cannot give the exact asymptotic rate.
\\
(ii) The first part of this remark explains that the choice of $p$ in (\ref{ws-31}) is reasonable, since it almost gives the exact asymptotic rate as $n\to\infty$.
However, it is not optimal for all $n$.
This might be seen as follows.
We simply fix $p$ from the very beginning and follow the above argument.
The most simple choice is $p=2$. Then we have the exact value of the sum $\sum_{j=1}^\infty \sigma_j^2$ at hand and the outcome is 
\be\label{ws-14}
a_n(I_d: H^{s,\#}_{\mix}(\T)\to L_2(\T)) \le  \left(\frac{1}{n} \, \Big(\frac{\pi^2}{3}-1\Big)^d\right)^{s/2}\,,
\ee
for all $n \in \N$.
Since $2 < \frac{\pi^2}{3}-1< e$ this estimate is of certain use if $n\ge e^d$. Now we compare 
(\ref{ws-13}) and (\ref{ws-14}). It follows
\[
\left(\frac{1}{n} \, \Big(\frac{\pi^2}{3}-1\Big)^d\right)^{s/2}
\le \frac{1}{n^s} \left(\frac{2e\ln n}{d}\right)^{sd}
\]
if and only if 
\[
\frac{n^{1/(2d)}}{\ln n} \, \le \frac{2e}{d} \Big(\frac{\pi^2}{3}-1\Big)^{-1/2} \, .
\]
A sufficient condition is given by 
\[
\frac{n^{1/(2d)}}{\ln n} \, \le \frac{e}{d} \, .
\]
The function $f(x):= x^{1/(2d)}/\ln x$ is decreasing on $[1,e^{2d}]$ and increasing on $[e^{2d}, \infty)$,
and $f(e^d)= \sqrt{e}/d < e/d$.
Because of $f(e^{cd})= (\sqrt{e})^c/(cd) \le  e/d$ if and only if $c - 2\, \ln c \le 2 $
we conclude that 
\[
\left(\frac{1}{n} \, \Big(\frac{\pi^2}{3}-1\Big)^d\right)^{s/2}
\le \frac{1}{n^s} \left(\frac{2e\ln n}{d}\right)^{sd} \qquad \mbox{if} \quad e^{d}\le n \le e^{c_0d},
\]
where $c_0$ is the solution of $c - 2\, \ln c = 2 $ ($5.35 < c_0 < 5.36$).
Hence, (\ref{ws-14}) is better than (\ref{ws-13}) as long as $e^{d}\le n \le e^{c_0d}$.
Different choices of $p$ (e.g., $p=3/2$, $p=4$) lead to different intervals of optimality, we omit further details.
\end{rem}


\subsection{The approximation numbers $a_n(I_d:H^{s,\#}_{\mix}(\T)\to L_2 (\T)$ }


For computational issues, the number $(3e^2)^d$ in Theorem\ \ref{gut} might be much too large. 
We will now focus on estimates for smaller $n$
and investigate the so-called preasymptotic behavior. To be more precise, we will deal with estimates 
of $a_n(I_d: \, H^{s,\#}_{\mix}(\T)\to L_2 (\T))$
from above and below in the range $1 \le n \le (d/2)\, 4^d$.

\begin{satz}\label{small}
Let $s>0$ and  $d\in \N$, $d\ge 2$. 
For all $1 \leq n \leq \frac{d}{2}4^d$ it holds
\be\label{ws-46}
a_n(I_d:H^{s,\#}_{\mix}(\T) \to L_2(\T)) \leq \Big(\frac{e^2}{n}\Big)^{\frac{s}{2+\log_2 d}}\,.
\ee
\end{satz}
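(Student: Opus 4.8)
The plan is to reduce to $s=1$ via Lemma \ref{scale}, recast the assertion as a comparison between $n$ and the counting function $C(r,d)$, and then split according to the size of the level $r$ that $n$ produces.

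\medskip\noindent\emph{Set-up and the easy case.} Writing $a_n:=a_n(I_d:H^{1,\#}_{\mix}(\T)\to L_2(\T))$, Lemma \ref{an1} gives $a_n=1/r$ whenever $C(r-1,d)<n\le C(r,d)$, so the claim $a_n\le(e^2/n)^{1/(2+\log_2 d)}$ is equivalent to $n\le e^2\,r^{2+\log_2 d}$. I would first record the two identities $(2^d)^{2+\log_2 d}=4^d d^d$ and $\tfrac d2 4^d\le e^2 4^d d^d$ (valid for all $d\ge1$): from them, if $r\ge 2^d+1$, the desired inequality is immediate, namely $n\le\tfrac d2 4^d\le e^2 4^d d^d\le e^2 r^{2+\log_2 d}$ --- and this is the \emph{only} place where the hypothesis $n\le\tfrac d2 4^d$ is used. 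It then remains to handle $r\le 2^d$, where, since $n\le C(r,d)$, it suffices to prove the purely combinatorial inequality
\[
C(r,d)\le e^2\, r^{2+\log_2 d}\qquad\text{for }1\le r\le 2^d,\ d\ge2\,.
\]

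\medskip\noindent\emph{The combinatorial inequality.} Since $r\le 2^d$ forces $\log_2 r\le d$, the sum in \eqref{eq21} runs only up to $L:=\lfloor\log_2 r\rfloor$ by Lemma \ref{ook}. I would insert $A(r,\ell)\le v_\ell(r)\le r(\ln r)^{\ell-1}/(\ell-1)!$ from Lemma \ref{vol} and $\binom d\ell\le d^\ell/\ell!$, reindex by $j=\ell-1$, and arrive at
\[
C(r,d)\le 1+2d\,r\sum_{j=0}^{L-1}\frac{(2d\ln r)^{j}}{j!\,(j+1)!}\,.
\]
The key observation will be that $\log_2 r\le d$ yields $(L-1)L\le(\log_2 r)^2\le 2\ln2\,(\log_2 r)^2=2\log_2 r\cdot\ln r\le 2d\ln r$, so the ratio $\dfrac{2d\ln r}{(j+1)(j+2)}$ of consecutive summands stays $\ge1$ on $0\le j\le L-2$; hence the summands are nondecreasing and the whole sum is at most $L$ times its last term. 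After writing $r^{2+\log_2 d}=r^2 d^{\log_2 r}$, using $d^{\log_2 r}\ge d^L$, $r\ge 2^L$, $\ln r<(L+1)\ln2$ and cancelling common powers of $d$ and $r$, the claim collapses to a one-variable inequality of the form
\[
\frac{L\bigl((L+1)\ln2\bigr)^{L-1}}{(L-1)!\,L!}\ \le\ e^2-\tfrac18\qquad(L\ge1)\,,
\]
whose left side tends to $0$ (the product of factorials in the denominator beats $L^{L}$); it is therefore trivial for large $L$ and is checked by hand for the first few values of $L$, the case $r=1$ (i.e.\ $L=0$) being immediate.

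\medskip\noindent\emph{The main obstacle.} The crux is precisely this combinatorial estimate for $r\le 2^d$: one must genuinely exploit the truncation $\ell\le\log_2 r$ in \eqref{eq21}. If one extended the $\ell$-sum to $\infty$ (which is the full tensor-product bound behind Lemma \ref{moderate}) or weakened $\binom d\ell\le d^\ell/\ell!$ to $\binom d\ell\le d^\ell$, the resulting constant would be of the wrong order, and would in fact fail already for $d=2,3$ --- exactly the dimensions in which $r=2^d$ really is attained within the admissible range of $n$. Making the truncation count, i.e.\ observing that over $0\le j\le L-1$ the summands are still increasing, which is where the bound $r\le 2^d$ enters, is what reduces the problem to the harmless one-variable inequality above.
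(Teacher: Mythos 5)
Your proposal is correct, and while it keeps the paper's skeleton -- reduce to $s=1$ via Lemma \ref{scale}, use Lemma \ref{an1} to write $a_n=1/r$ when $C(r-1,d)<n\le C(r,d)$, and boil everything down to the counting bound $C(r,d)\le e^2\,r^{2+\log_2 d}$ for $r\le 2^d$ -- it deviates in two places, one of which is a genuine simplification. First, you dispose of the hypothesis $n\le\frac d2 4^d$ by the observation that for $r>2^d$ the desired inequality is trivial, since $e^2 r^{2+\log_2 d}\ge e^2 4^d d^d\ge \frac d2 4^d\ge n$; the paper instead proves the lower bound $C(2^d,d)\ge \frac d2 4^d$, which costs the cardinality estimate \eqref{er-01}, a chain of binomial identities and the final check \eqref{4.19} -- your route makes that entire computation unnecessary. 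Secondly, your proof of the counting bound itself is more laborious than the paper's: after inserting $A(r,\ell)\le v_\ell(r)\le r(\ln r)^{\ell-1}/(\ell-1)!$ and $\binom{d}{\ell}\le d^\ell/\ell!$ exactly as in the paper, the paper simply uses $(\ln r)^{\ell-1}/(\ell-1)!\le e^{\ln r}=r$ termwise and then $\sum_{\ell\le L}2^\ell\binom{d}{\ell}\le d^{L}\sum_{\ell\ge 0}2^\ell/\ell!=e^2 d^{L}\le e^2 d^{\log_2 r}$, whereas you keep the logarithmic factors, show the summands are nondecreasing (using $r\le 2^d$ again) and bound the sum by $L$ times its last term, ending with the one-variable inequality $L\bigl((L+1)\ln 2\bigr)^{L-1}/\bigl((L-1)!\,L!\bigr)\le e^2-\frac18$; this is indeed true (its maximum is roughly $2.08$, attained at $L=2$), but you should record that verification explicitly rather than only gesture at it. Finally, your closing remark overstates what is essential: the truncation $\ell\le\log_2 r$ need not be exploited via the monotone-summand trick -- the paper's cruder termwise bound already suffices and uses the truncation only through $d^\ell\le d^{L}$ -- so that step buys you nothing beyond what the simpler argument gives, although it is perfectly valid.
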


\bproof
It is enough to consider the case $s=1$. Let $1\le r \leq 2^d$. 
Then for $C(r-1,d) < n \leq C(r,d)$ we have $a_n = 1/r$\,. Let us estimate
$C(r,d)$ in this case. We shall use $[x]$ to denote the greatest integer part of the real number $x$. 
Starting from \eqref{eq21} and using the obvious estimate 
$x^k/k!\le e^x$ applied to $x=\ln r=d$, we obtain  
\be\nonumber
    \begin{split}
        C(r,d) &= 1+\sum\limits_{\ell = 1}^{[\log_2 r]} 2^{\ell}\, \binom{d}{\ell}\, A(r,\ell)
        \leq 1+\sum\limits_{\ell = 1}^{[\log_2 r] } 2^{\ell}\, \binom{d}{\ell}\, v_{\ell}(r)
        \leq 1+\sum\limits_{\ell = 1}^{[\log_2 r]} 2^{\ell}\, \binom{d}{\ell}\, r\, \frac{(\ln r)^{\ell-1}}{(\ell-1)!}\\
        &\leq r^2\sum\limits_{\ell = 0}^{[\log_2 r]} 2^{\ell} \, \binom{d}{\ell} \, 
\leq r^2\sum\limits_{\ell=0}^{[\log_2 r]} 2^{\ell} \, \frac{d^\ell}{\ell !} 
        \leq r^2 \, d^{[\log_2 r]}\, e^2\leq e^2\, r^{2+\log_2 d}\,.
    \end{split}
\ee
This gives $n \leq C(r,d) \leq e^2\, r^{2+\log_2 d}$ which implies $1/r \leq (e^2/n)^{1/(2+\log_2 d)}$\,. 
Therefore we get for all $n \leq C(r,d)$ the relation
$$
        a_n \leq \Big(\frac{e^2}{n}\Big)^{\frac{1}{2+\log_2 d}}\,.
$$
This estimate holds for all $n\leq C(2^d,d)$. 
To estimate $C(2^d,d)$ from below we need a preparation. 
Obviously, in case $\ell \ge 2$, we have
\[
\Big\{k \in \N^\ell: \: k_2 = \, \ldots \, =k_\ell = 1 \, ,
\quad (1+k_1)\, 2^{\ell-1}\le r \Big\} \subset \cm (r,\ell) \, .
\]
The set of the left hand side has cardinality $[r2^{-\ell+1}]-1$.
By interchanging the roles of $k_1$ with $k_j$, $j=2,3,\ldots$, we find $\ell$ subsets of $\cm (r,\ell)$
having only $(1, \ldots \, , 1)$ in the intersection. This implies
\begin{equation}\label{er-01}
  A(r,\ell) \ge \ell \, [r\, 2^{-\ell+1}]-2\ell+1\, , 
\end{equation}
which is also true for $\ell =1$.
In case  $r=2 ^d$ we obtain from Lemma \ref{ook}

\begin{eqnarray}
   C(2^d,d) & = &  1+\sum\limits_{\ell = 1}^{d} 2^{\ell}\, \binom{d}{\ell}\, A(2^d,\ell) 
\geq 1+\sum\limits_{\ell = 1}^{d} 2^{\ell}\, \binom{d}{\ell} \, \Big(\ell 2^{d-\ell +1} - 2\ell +1\Big)
\nonumber
\\
& = &  2^{d+1} \, \sum\limits_{\ell = 1}^{d}  \binom{d}{\ell} \, \ell \,  
-2 \sum\limits_{\ell = 1}^{d}  \binom{d}{\ell} \, \ell \, 2^\ell
+ 3^d 
\nonumber
\\
&  = & 3^d + 2^{d+1}
\sum\limits_{\ell = 1}^d \frac{d!}{(d-\ell)!(\ell-1)!}-2\sum\limits_{\ell = 1}^d 2^{\ell}\frac{d!}{(d-\ell)!(\ell-1)!}
\nonumber\\
&=& 3^d +2^{d+1}d\sum\limits_{\ell = 1}^d \binom{d-1}{\ell-1}-2d\sum\limits_{\ell = 1}^d 2^{\ell}\binom{d-1}{\ell-1}
\nonumber\\
&=& 3^d +2^{d+1}d2^{d-1}-4d3^{d-1}
\nonumber\\
&=& 3^d +d4^d-4d3^{d-1}\,\nonumber.
\end{eqnarray}
Hence, we have $C(2^d,d) \geq 3^d +d4^d-\frac{4}{3}d3^d$\,. 
Note, that 
\begin{eqnarray}
  C(2^d,d) \geq \frac{d}{2}4^d &\Longleftrightarrow& 3^d +d4^d-\frac{4}{3}d3^d \geq \frac{d}{2}4^d
  \nonumber\\
  &\Longleftrightarrow& \frac{d}{2}4^d \geq  \Big(\frac{4}{3}d-1\Big)3^d\,.\label{4.19}
\end{eqnarray}
Of course, \eqref{4.19} is true for all $d\geq 2$\,. The proof is complete. 
\eproof

Let us turn to an estimate from below. 

\begin{satz}\label{lowersmall} Let $d\in \N$, $d\ge 2$, and $s>0$. 
For $n \ge 2$ we define
\be\label{er-04}
\alpha (n,d) := 2+\log_2\Big(\frac{d}{\log_2 n} + \frac 12\Big) \, .
\ee
For all $2 \leq n \le \frac{d}{2}4^d$ it holds 
\be\label{f64}
    a_n(I_d:H^{s,\#}_{\mix}(\T) \to L_2(\T)) \geq 2^{-s} n^{-\frac{s}{\alpha (n,d)}}\,.
\ee
\end{satz}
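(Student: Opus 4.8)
The plan is to reduce everything, via Lemma \ref{scale}, to $s=1$ and to the counting function $C(\cdot,d)$. By Lemma \ref{an1} we may write $a_n=1/r$, where the integer $r$ is determined by $C(r-1,d)<n\le C(r,d)$; since $n\le\frac d2 4^d\le C(2^d,d)$ (the last inequality being established inside the proof of Theorem \ref{small}), we get $r\le 2^d$. Now \eqref{f64} is equivalent to $r\le 2\,n^{1/\alpha(n,d)}$, i.e.\ to
\[
\alpha(n,d)\,\log_2(r/2)\ \le\ \log_2 n .
\]
If $r=2$ (the only option with $r\le 2$ when $n\ge 2$) this is trivial, so assume $r\ge 3$, whence $\log_2(r/2)>0$. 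As $n\mapsto\log_2 n$ is increasing and $n\mapsto\alpha(n,d)$ is decreasing on $[2,\infty)$, the quantity $\log_2 n-\alpha(n,d)\log_2(r/2)$ is increasing in $n$; since $n\ge C(r-1,d)+1$, it is enough to verify the displayed inequality for $n=C(r-1,d)+1$.

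So put $N:=C(r-1,d)+1$, $c:=\log_2 N$ and $\ell_0:=[\log_2(r-1)]$, noting $1\le\ell_0\le d-1$ and $\log_2(r/2)\le\ell_0$. Unravelling the definition \eqref{er-04}, the inequality to be proved becomes $2^{\,c/\log_2(r/2)}\ge \frac{4d}{c}+2$, and since $\log_2(r/2)\le\ell_0$ it suffices to show $2^{\,c/\ell_0}\ge\frac{4d}{c}+2$. The remaining task — and the crux — is a good lower bound for $c$. Using Lemma \ref{ook}, the monotonicity $C(r-1,d)\ge C(2^{\ell_0},d)$, the lower bound \eqref{er-01} for $A(\cdot,\ell)$, and the fact that $r-1\ge 2^{\ell_0}$ forces $[(r-1)2^{-\ell+1}]\ge 2^{\ell_0-\ell+1}$, one bounds the $\ell$-th summand in \eqref{eq21} from below by $\ell\,2^{\ell_0}\binom d\ell$ for $1\le\ell\le\ell_0-1$ and by $2^{\ell_0}\binom d{\ell_0}$ for $\ell=\ell_0$; summing (and using $\ell\binom d\ell=d\binom{d-1}{\ell-1}$) gives
\[
C(r-1,d)\ \ge\ 1+2^{\ell_0}\Big(\binom d{\ell_0}+d\sum_{i=0}^{\ell_0-2}\binom{d-1}{i}\Big)=:1+2^{\ell_0}\,T(\ell_0,d),
\]
so that $c\ge\ell_0+\log_2 T(\ell_0,d)$ (for $\ell_0=1$ one rather uses $C(r-1,d)=1+2d(r-2)$ directly). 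Consequently $2^{\,c/\ell_0}\ge 2\,T(\ell_0,d)^{1/\ell_0}$, and bounding $c\ge\ell_0+\log_2 T(\ell_0,d)$ also in the term $4d/c$, it suffices to check
\[
\big(T(\ell_0,d)^{1/\ell_0}-1\big)\big(\ell_0+\log_2 T(\ell_0,d)\big)\ \ge\ 2d,\qquad 2\le\ell_0\le d-1 .
\]
For $\ell_0\le d/2$ this follows from $T(\ell_0,d)\ge\binom d{\ell_0}\ge(d/\ell_0)^{\ell_0}$ together with the sum term in $T$, and for $\ell_0$ close to $d$ from $T(d-1,d)=d(2^{d-1}-d+1)$ and, more generally, $T(\ell_0,d)\ge d\binom{d-1}{[(d-1)/2]}$. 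The few residual small cases ($\ell_0=1$ for arbitrary $d$, i.e.\ $r\in\{3,4\}$, and $d\in\{3,4\}$) are settled by direct computation.

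I expect the main obstacle to be exactly this lower bound for $C(r-1,d)$: the estimate \eqref{f64}, and the precise shape of $\alpha(n,d)$, are sharp up to the constants, so the bound must capture the full size of $C$ — in particular the factor $d$ in $C(2^d,d)\asymp d\,4^d$ — and the naive substitutes ($A(\rho,\ell)\ge 1$, or keeping only the single term $2^{\ell_0}\binom{d}{\ell_0}$) are already too weak for moderate $\ell_0$. A secondary technical point is that one must keep $\log_2(r/2)$ (not the larger $\log_2(r-1)$) and retain the ``$+1$'' in $N=C(r-1,d)+1$ (equivalently, use that $n$ is an integer strictly larger than $C(r-1,d)$); both are genuinely needed to make the small cases $r=3,4$ go through.
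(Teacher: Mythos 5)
Your reduction is correct and runs parallel to the paper's own argument: reduce to $s=1$, write $a_n=1/r$ with $C(r-1,d)<n\le C(r,d)$, note $r\le 2^d$, use monotonicity in $n$ to pass to $n=C(r-1,d)+1$, and lower-bound $C(r-1,d)$ via \eqref{eq21} and \eqref{er-01}; all of these steps, including the bound $C(r-1,d)\ge 1+2^{\ell_0}T(\ell_0,d)$ with $T(\ell_0,d)=\binom{d}{\ell_0}+d\sum_{i=0}^{\ell_0-2}\binom{d-1}{i}$, check out. The genuine gap is exactly where you place the crux: the inequality $\bigl(T(\ell_0,d)^{1/\ell_0}-1\bigr)\bigl(\ell_0+\log_2 T(\ell_0,d)\bigr)\ge 2d$ for $2\le\ell_0\le d-1$ is asserted, not proved, and the estimates you name do not suffice as stated. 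From $T\ge\binom{d}{\ell_0}\ge(d/\ell_0)^{\ell_0}$ alone one only gets $(d-\ell_0)\bigl(1+\log_2(d/\ell_0)\bigr)$, which at $\ell_0\sim d/2$ is about $d$, half of what is required; ``together with the sum term'' is a gesture, and turning it into an argument needs genuine entropy-type control of $\sum_{i\le\ell_0-2}\binom{d-1}{i}$. Near $\ell_0=d-1$ the true margin is only logarithmic (the product is roughly $2d-2+\log_2 d+4\ln d$), so the crude replacement $T\ge d\binom{d-1}{[(d-1)/2]}$ has to be handled with care and in fact falls short for small $d$, pushing more cases into ``direct computation'' than you acknowledge; moreover, for odd $d$ the value $\ell_0=(d+1)/2$ lies in neither of your two announced regimes. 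Numerical spot checks suggest the target inequality is true throughout, but as written the quantitative heart of the proof is open.

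For comparison, the paper sidesteps all of this with one convexity trick. With $2^m\le r-1<2^{m+1}$ it keeps only $C(r-1,d)\ge 2^{m+1}\sum_{\ell=1}^m\binom{d}{\ell}$, then uses $\binom{d}{\ell}\ge\binom{m}{\ell}(d/m)^\ell$ and the binomial theorem to get $n>2^{m+1}\bigl[(1+d/m)^m-1\bigr]\ge 4^m\bigl(\tfrac{d+m}{2m}\bigr)^m$. Taking $\log_2$ gives $\log_2 n>m\bigl[2+\log_2(\tfrac{d}{2m}+\tfrac12)\bigr]\ge 2m$, hence $\tfrac{d}{2m}\ge\tfrac{d}{\log_2 n}$ and $\log_2 n>m\,\alpha(n,d)$, so $2^m<n^{1/\alpha(n,d)}$ and $a_n=1/r\ge 2^{-(m+1)}\ge\tfrac12 n^{-1/\alpha(n,d)}$ --- no case analysis, no residual small cases. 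If you want to keep your framework, replacing your $T(\ell_0,d)$ by $2\sum_{\ell=1}^{\ell_0}\binom{d}{\ell}$ and invoking the same estimate $\sum_{\ell\le\ell_0}\binom{d}{\ell}\ge(1+d/\ell_0)^{\ell_0}-1$ closes your gap cleanly; otherwise you must actually prove your product inequality over the whole range of $\ell_0$.
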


\bproof It suffices to deal with the case $s=1$. Let $2\leq r\leq 2^d$ such that $C(r-1,d) < n \leq C(r,d)$. 
Furthermore, let $m \in \N_0$ be determined from
\be\label{er-02}
2^m \le r-1 < 2^{m+1}\,.
\ee
Then \eqref{eq21} and \eqref{er-01} imply 
$$
  n>C(r-1,d) = 1+ \sum\limits_{\ell = 1}^m 2^{\ell}\binom{d}{\ell}A(r,\ell) \ge 
\sum\limits_{\ell = 1}^m 2^{\ell}\binom{d}{\ell} \, \Big(\ell \, 2^{m-\ell+1} -\ell +1\Big)
\ge 2^{m+1}\, \sum\limits_{\ell = 1}^m \binom{d}{\ell}
\,.
$$
Hence
$$
  n> 2^{m+1} \, \sum\limits_{\ell=1}^{m}\binom{m}{\ell}\frac{d(d-1)\cdot(d-\ell+1)}{m(m-1)\cdots(m-\ell+1)} 
\geq  2^{m+1}  \, \sum\limits_{\ell = 1}^m \binom{m}{\ell}\Big(\frac{d}{m}\Big)^\ell\,.
$$
Taking the binomial formula into account, this implies 
$$
    n> 2^{m+1} \, \Big\{\Big(1+ \frac{d}{m}\Big)^m-1\Big\} \ge   4^m\Big(\frac{d+m}{2m}\Big)^m\,.
$$
Next we apply $\log_2$ on both sides and obtain
\be\label{f81}
  \log_2 n > m \, \Big\{2+\log_2\Big(\frac{d}{2m} + \frac 12\Big)\Big\}\ge 2m\,,
\ee
since $\frac{d}{2m} + \frac 12\ge 1$\,.  Together with \eqref{f81} this yields
\[
   \log_2 n > m\, \Big\{2+\log_2\Big(\frac{d}{\log_2 n} + \frac 12\Big)\Big\} = m \cdot \alpha (n,d)  \,.
\]
Rewriting this inequality we get
\[
2^m < 2^{\frac{\log_2 n}{\alpha (n,d)}} = n^{\frac{1}{\alpha (n,d)}}\, .  
\]
Taking \eqref{er-02} into account, we finally conclude
\be\label{f66}
a_n = \frac 1r \geq \frac{1}{2^{m+1}}
\geq \frac{1}{2}\, n^{-\frac{1}{\alpha (n,d)}}
\ee
for all $n$, $C(1,d) < n \le C (2^d,d)$, hence, at least for $2 \le n \le \frac{d}{2}4^d$ (see \eqref{4.19}).
If $s \neq 1$ we obtain \eqref{f64} by raising \eqref{f66} to the power $s$. This finishes the proof. 
\eproof

\begin{rem}
 \rm
(i)
Of course, there remains a gap between the lower bound in \eqref{f64} and the upper bound in \eqref{ws-46}.
For simplicity we comment on this gap for $s=1$ only.
In fact, we have
\[
0 <  \frac{1}{\alpha (n,d)} - \frac{1}{2 + \log_2 d} < \frac{\log_2\log_2 n}{4 + 2\log_2 d} \, .
\]
The gap is very mildly growing in $n$ (keeping $d$ fixed).
Therefore, our estimates are loosing quality when $n$ increases.
Right now we do not have a conjecture about the correct bounds, most probably 
both, the lower and the upper estimate, can be improved.
 \\
(ii) Note that $\alpha(n,d)$ is decreasing in $n$. Hence, on certain smaller intervals of $n$, the dependence on $n$ in $\alpha(n,d)$ can be removed by simple monotonicity arguments. 
For instance, since $\alpha(4^d,d)=2$ and $\alpha(2^{2d/3},d)=3$, we get
\[
a_n(I_d:H^{s,\#}_{\mix}(\T) \to L_2(\T)) \geq 2^{-s} n^{-s/2} 
\]
simultaneously for all $2 \le n \le 4^d$, and
\[
a_n(I_d:H^{s,\#}_{\mix}(\T) \to L_2(\T)) \geq 2^{-s} n^{-s/3} 
\]
simultaneously for all $2 \le n \le 2^{2d/3}$.\\
The approximation rate in these examples is much worse than the asymptotic rate $n^{-s}$ (ignoring the logarithmic factors).
This illustrates well that one has to wait exponentially long until one can "see" the correct asymptotic behavior of the approximation numbers.

\end{rem}


\subsection{The approximation numbers of $H^{s,+}_{\mix}(\T)$, 
$H^{s,*}_{\mix}(\T)$, and $H^m_{\mix}(\T)$ in $L_2 (\T)$}


Now we turn to the investigation of 
$a_{n} (I_d: \, H^{s,+}_{\mix} (\T) \to L_2 (\T))$ and 
$a_{n} (I_d: \, H^{s,*}_{\mix} (\T) \to L_2 (\T))$.


\subsubsection{Preparation}


For $s>0$ and $k\in\Z$ we put
\be
 w_{s}^+(k) :=   \prod\limits_{j=1}^d(1+ |k_j|^2)^{s/2}\,  \qquad \mbox{and}\qquad
 w_{s}^*(k) :=   \prod\limits_{j=1}^d(1+ |k_j|^{2s})^{1/2}\,,
\ee
see (\ref{norm4.5}) and (\ref{norm5}), respectively.
Of interest for us are the non-increasing rearrangements of $(1/w_{s}^+ (k))_{k \in \Z}$ and 
$(1/w_{s}^*(k))_{k \in \Z}$. 
Let 
\[
C^+_s(r,d)  :=  \#\{k\in \Z~:~w_{s,+}(k) \leq  r\}\qquad
\mbox{and}\qquad
C^*_s(r,d)  :=  \#\{k\in \Z~:~w_{s,*}(k) \leq  r\}\, , 
\]
where $r\ge 1$, $s>0$ are  real numbers.
Let us also define the smaller numbers 
\[
c^+_s(r,d)  :=  \#\{k\in \Z~:~w_{s,+}(k) <  r\}\qquad 
\mbox{and} \qquad
c^*_s(r,d)  :=  \#\{k\in \Z~:~w_{s,*}(k) <  r\}\, . 
\]
In contrast to the weights $w_s^\#$, we now have no 
complete overview over all possible values of $w_{s}^+(k)$, 
$w_{s}^*(k)$ as $k$ runs through $\Z$. Therefore, it is impossible to describe the full sequence of 
approximation numbers $a_n$. However, since $C^+_s((1+r^2)^{s/2},d) > c^+_s((1+r^2)^{s/2},d)$ and 
$C^{*}_s((1+r^{2s})^{1/2},d)>c^{*}_s((1+r^{2s})^{1/2},d)$ if $r\in \n$, 
we have at least some partial information about the piecewise constant sequence $a_n$ of approximation numbers.

\begin{lem}
Let $s>0$ and $r\in\N_0$.\\
{\rm (i)} If $c^+_s((1+r^2)^{s/2},d) < n\leq C^+_s((1+r^2)^{s/2},d)$, then
\be\label{f61}
a_{n} (I_d: \, H^{s,+}_{\mix} (\T) \to L_2 (\T)) = (1+r^2)^{-s/2}\, .
\ee
{\rm (ii)} If $c^+_s((1+r^{2s})^{1/2},d) < n\leq C^+_s((1+r^{2s})^{1/2},d)$, then 
$$
a_{n} (I_d: \, H^{s,*}_{\mix} (\T) \to L_2 (\T)) = (1+r^{2s})^{-1/2}\, .
$$
\end{lem}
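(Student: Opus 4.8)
The statement is a direct consequence of the factorization established in Subsection~\ref{parapp} together with Lemma~\ref{sing}. The plan is to observe that, exactly as in the case of the $\#$-norm, the embedding $I_d:H^{s,+}_{\mix}(\T)\to L_2(\T)$ is unitarily equivalent to the diagonal operator $D_{w}$ on $\ell_2(\Z)$ with weights $1/w_{s}^+(k)$, and likewise for the $*$-norm with weights $1/w_{s}^*(k)$. Hence $a_n(I_d)=\tau_n$, where $(\tau_n)_{n\in\N}$ is the non-increasing rearrangement of $(1/w_{s}^+(k))_{k\in\Z}$ (resp.\ $(1/w_{s}^*(k))_{k\in\Z}$).

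First I would fix $s>0$ and $r\in\N_0$ and set $t:=(1+r^2)^{s/2}$. By definition of $c^+_s$ and $C^+_s$, the value $1/t$ occurs in the sequence $(1/w_{s}^+(k))_k$, the number of entries strictly larger than $1/t$ is exactly $c^+_s(t,d)$, and the number of entries that are $\ge 1/t$ is exactly $C^+_s(t,d)$. Consequently, in the non-increasing rearrangement one has $\tau_n>1/t$ for $n\le c^+_s(t,d)$, $\tau_n=1/t$ for $c^+_s(t,d)<n\le C^+_s(t,d)$, and $\tau_n<1/t$ for $n>C^+_s(t,d)$. In particular $\tau_n=1/t=(1+r^2)^{-s/2}$ whenever $c^+_s(t,d)<n\le C^+_s(t,d)$, which is precisely~\eqref{f61}. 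Part~(ii) is obtained verbatim with $t$ replaced by $(1+r^{2s})^{1/2}$ and the weights $w_{s}^+$ replaced by $w_{s}^*$.

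There is essentially no obstacle here: the only point that needs a word of care is that the hypotheses ensure the interval $(c^+_s(t,d),C^+_s(t,d)]$ is nonempty, i.e.\ that $1/t$ really is attained as a value of some $1/w_{s}^+(k)$; but for $r\in\N_0$ this is witnessed by $k=(r,0,\dots,0)$, so $C^+_s(t,d)>c^+_s(t,d)$ as already remarked in the text, and the claim is a triviality about rearrangements. The same remark (with $k=(r,0,\dots,0)$) covers the $*$-case.
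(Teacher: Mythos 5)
Your proposal is correct and follows essentially the same route as the paper: reduce via Lemma~\ref{sing} and the factorization argument of \eqref{eq072} to the non-increasing rearrangement of $(1/w_s^+(k))_k$ (resp.\ $(1/w_s^*(k))_k$), and observe that the value in question is attained at $k=(r,0,\dots,0)$, so the counting functions $c^+_s$ and $C^+_s$ pin down exactly where the rearrangement equals $(1+r^2)^{-s/2}$ (resp.\ $(1+r^{2s})^{-1/2}$). Your explicit rearrangement bookkeeping is just a spelled-out version of the paper's one-line proof.
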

\bproof
By Lemma\ \ref{sing} and the same principles as used in \eqref{eq072},
it is enough to note that we have $w_{s}^+(k)= (1+r^2)^{s/2}$ and 
$ w_{s}^*(k)\ge (1+r^{2s})^{1/2}$ for $k=(r,0,\ldots,0)\in\Z$ .
\eproof


\subsubsection{Some more combinatorics}


Since we have only incomplete information on the set of all values attained by the weights $w_{s}^+(k)$ and 
$ w_{s}^*(k)$, it is very difficult to establish similar combinatorial identities and sharp estimates as 
for the weight $ w_{s}^\#(k)$. Therefore we decided for a different strategy.
For $\ell\in\Z$, $0<\eps\le 1$ and $d\in\N$ let
$$
a_\ell:=\frac{1}{1+|\ell|}\quad,\quad\mathcal{A}_d(\eps) :=\left\{k\in\Z:\: a_{k_1}\cdots a_{k_d}\ge \eps \right\}\quad,\quad
A_d(\eps):= \# \mathcal{A}_d(\eps)\,.
$$
Because of
$$
\Big\{k\in\Z:\: a_{k_1}\cdots a_{k_d}\ge \frac 1r \Big\}  = \Big\{k\in\Z:\: \prod_{j=1}^d (1+ |k_j|) \le r \Big\} = \cn (r,d)
$$
we have $A_d(1/r)=C(r,d)$ for all $r\in \N_0$. Using \eqref{eq28}, Lemma\ \ref{combi} and a simple monotonicity argument, this implies
\begin{equation}\label{ws-19}
  \lim_{\eps\downarrow 0}\frac{\eps\cdot A_d(\eps)}{(\ln A_d(\eps))^{d-1}} = \lim_{r \to \infty} \frac{C(r,d)}{r(\ln C(r,d))^{d-1}} = \frac{2^d}{(d-1)!} \,.
\end{equation}
As consequences of these identities, we find
for arbitrary $\lambda >0$ and all $d\in\N$
\begin{equation}\label{ad}
\lim_{\eps\downarrow 0}\frac{A_d(\eps)}{A_d(\lambda\eps)}=\lambda\qquad\text{and}\qquad 
\lim_{\eps\downarrow 0}\frac{A_{d-1}(\eps)}{A_d(\lambda\eps)}= 0\,.
\end{equation}

\begin{lem}\label{combi} Let $(b_\ell)_{\ell\in\zz}$ be a sequence indexed by $\zz$ such that 
$$
0<b_{\ell}\le b_0=1 \quad\text{for all } \ell\neq 0\qquad\text{and }\quad\lim_{|\ell|\to\infty}\frac{a_\ell}{b_\ell}=1\,.
$$
Similarly as for $(a_\ell)_{\ell\in\Z}$ we define
$\mathcal{B}_d(\eps)$ and $B_d(\eps)$ associated to $(b_\ell)_{\ell\in\Z}$\,.
Then we have 
\be\label{Lem412}
\lim_{\eps\downarrow 0}\frac{B_d(\eps)}{A_d(\eps)}=1\,.
\ee
\end{lem}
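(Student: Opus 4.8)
The plan is to sandwich $B_d(\eps)$ between values of $A_d$ at slightly rescaled arguments plus a lower‑order remainder, and then let $\eps\downarrow0$ using the identities \eqref{ad}. I would argue by induction on $d$, with the case $d=1$ absorbed into the general step under the convention $A_0\equiv B_0\equiv1$ (empty products), noting that for $d=1$ one only needs $A_1(\eps)\to\infty$.

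Fix $\delta\in(0,1)$. Since $a_\ell/b_\ell\to1$, there is $L=L(\delta)\in\N$ with $(1+\delta)^{-1}a_\ell\le b_\ell\le(1-\delta)^{-1}a_\ell$ whenever $|\ell|>L$. For $k\in\Z$ put $\sigma(k):=\#\{j:\,1\le|k_j|\le L\}$ and split both $\mathcal{A}_d$ and $\mathcal{B}_d$ according to the value $m=\sigma(k)\in\{0,1,\dots,d\}$. On the part $\sigma(k)=0$ every coordinate is either $0$ (where $a$ and $b$ coincide) or has $|k_j|>L$, so there the products $\prod_j a_{k_j}$ and $\prod_j b_{k_j}$ differ only by a factor in $[(1+\delta)^{-d},(1-\delta)^{-d}]$; this yields the inclusions $\{\sigma=0\}\cap\mathcal{B}_d(\eps)\subseteq\mathcal{A}_d((1-\delta)^d\eps)$ and $\{\sigma=0\}\cap\mathcal{A}_d((1+\delta)^d\eps)\subseteq\mathcal{B}_d(\eps)$. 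On the part $\sigma(k)=m\ge1$ I would first select the $m$ positions ($\binom{d}{m}$ ways) and the values there (at most $(2L)^m$), after which the remaining $d-m$ coordinates form a vector counted by $B_{d-m}(\eps)\le B_{d-1}(\eps)$ (monotonicity in the dimension); summing over $m\ge1$ bounds this part by $K\,B_{d-1}(\eps)$ with $K=K(\delta,d):=(1+2L)^d-1$, and the same estimate holds with $A$ in place of $B$. Collecting the pieces should give
\[
A_d\big((1+\delta)^d\eps\big)-K\,A_{d-1}\big((1+\delta)^d\eps\big)\ \le\ B_d(\eps)\ \le\ A_d\big((1-\delta)^d\eps\big)+K\,B_{d-1}(\eps).
\]

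Next I would divide by $A_d(\eps)$ and send $\eps\downarrow0$ with $\delta$ (hence $L$ and $K$) fixed. The first identity in \eqref{ad} gives $A_d((1\pm\delta)^d\eps)/A_d(\eps)\to(1\pm\delta)^{-d}$; the second identity, together with the monotonicity of $A_{d-1}$ in its argument, gives $A_{d-1}((1+\delta)^d\eps)/A_d(\eps)\to0$; and the induction hypothesis $B_{d-1}(\eps)/A_{d-1}(\eps)\to1$ combined with $A_{d-1}(\eps)/A_d(\eps)\to0$ gives $B_{d-1}(\eps)/A_d(\eps)\to0$. Hence $(1+\delta)^{-d}\le\liminf_{\eps\downarrow0}B_d(\eps)/A_d(\eps)$ and $\limsup_{\eps\downarrow0}B_d(\eps)/A_d(\eps)\le(1-\delta)^{-d}$; letting $\delta\downarrow0$ then proves \eqref{Lem412}.

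The one real obstacle is the finitely many ``small'' indices $1\le|\ell|\le L$, where $a_\ell$ and $b_\ell$ need not be comparable up to the factor $1+\delta$, so one cannot simply multiply the whole product by $(1\pm\delta)^d$. The decomposition by $\sigma(k)$ is tailored precisely to isolate these: a vector carrying such a coordinate contributes only $O(1)$ choices per coordinate times a genuinely lower‑dimensional count, which by \eqref{ad} (and, on the $B$‑side, the inductive hypothesis) is $o(A_d(\eps))$ and therefore harmless.
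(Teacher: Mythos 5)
Your proposal is correct and follows essentially the same route as the paper's proof: you isolate the finitely many ``small'' indices where $a_\ell/b_\ell$ need not be $\delta$-close to $1$, show that vectors containing such coordinates contribute only a lower-dimensional count which is negligible by \eqref{ad}, and sandwich $B_d(\eps)$ between $A_d\big((1+\delta)^d\eps\big)$ and $A_d\big((1-\delta)^d\eps\big)$ up to these remainders before letting $\delta\downarrow 0$. The only differences are technical: you drop all small coordinates at once (using $b_\ell\le b_0=1$) and control the resulting $B_{d-1}(\eps)$ term by induction on $d$, whereas the paper removes a single small coordinate and uses a uniform two-sided bound $c\le a_\ell/b_\ell\le C$ to land in $\mathcal{A}_{d-1}(c^{d-1}\eps)$, thereby avoiding both the induction and your counting factor $K(\delta,d)$.
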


\bproof Let us first observe that there are constants $0<c\le C<\infty$ such that $c\le a_{\ell}/b_{\ell} \le C$ for all $\ell\in \zz$.
Fix now $0<\eps\le 1$ and $\delta >0$ (small), and select $m=m(\delta)\in\N$ such that
$$
1-\delta\le \frac{a_{\ell}}{b_{\ell}}\le 1+\delta \qquad\text{ for all } |\ell|\ge m\,.
$$
For $k\in\mathcal{B}_d(\eps)$, we distinguish two cases.
\\\\
Case 1, $|k_j|\ge m$ for all $j$. This implies
$$
\eps\le \prod_{j=1}^d b_{k_j}\le \prod_{j=1}^d \frac{a_{k_j}}{1-\delta}\,,
$$
and thus $k\in\mathcal{A}_d((1-\delta)^d\eps)$\,.
\\\\
Case 2,  $|k_\ell|<m$ for some $\ell$. Now we have
$$
\eps\le \prod_{j=1}^d b_{k_j}=b_{k_\ell}\prod_{j\neq\ell} b_{k_j}\le \prod_{j\neq\ell} \frac{a_{k_j}}{c}\,,
$$
which gives $(k_1,...,k_{\ell-1},k_{\ell+1},...,k_d)\in \mathcal{A}_{d-1}(c^{d-1}\eps)$.
Since there are $d$ choices of the index $\ell\in\{1,...,d\}$ and $2m-1$ possible values of $k_\ell$, we conclude that
$$
B_d(\eps)\le A_d((1-\delta)^d\eps) + (2m-1)d\cdot A_{d-1}(c^{d-1}\eps)\,.
$$
Using the relations (\ref{ad}), this gives  
\begin{equation}\label{limsup}
\limsup_{\eps\downarrow 0}\frac{B_d(\eps)}{A_d(\eps)}\le \frac{1}{(1-\delta)^d}\,.
\end{equation}
Now we show a lower estimate for $B_d(\eps)$. Let $k\in \mathcal{A}_d((1+\delta)^d\eps)$. Again we distinguish two cases.\\\\
If all $|k_j|\ge m$,  we have
$$
(1+\delta)^d\eps\le \prod_{j=1}^d a_{k_j}\le \prod_{j=1}^d (1+\delta)b_{k_j}\,,
$$
that means $k\in\mathcal{B}_d(\eps)$.
\\\\
Otherwise, if $k_\ell <m$ for some $\ell$, we have
$$
(1+\delta)^d\eps\le a_{k_\ell}\cdot\prod_{j\neq \ell} a_{k_j}\le \prod_{j\neq \ell} a_{k_j}\,,
$$
which means $(k_1,...,k_{\ell-1},k_{\ell+1},...,k_d)\in \mathcal{A}_{d-1}((1+\delta)^d\eps)$, and we get
$$
A_d((1+\delta)^d\eps)-2(m-1)d\cdot A_{d-1}((1+\delta)^d\eps)\le B_d(\eps)\,.
$$
This implies, using again (\ref{ad}),
\begin{equation}\label{liminf}
\liminf_{\eps\downarrow 0} \frac{B_d(\eps)}{A_d(\eps)}\ge\frac{1}{(1+\delta)^d}\,,
\end{equation}
and since (\ref{limsup}) and (\ref{liminf}) are true for all $\delta>0$, the proof is finished.
\eproof

\noindent 
There are some simple consequences of  Lemma\ \ref{combi} which are of interest for us.
Taking logarithms  in \eqref{Lem412} yields 
$$
    \lim\limits_{\varepsilon \downarrow 0} \Big(\ln B_d(\eps) - \ln A_d(\eps)\Big) = 0.
$$
Since $\lim_{\eps \downarrow 0} A_d(\eps) = \infty$\,, we get
\be\label{f80}
  \lim\limits_{\eps\downarrow 0}\frac{\ln B_d(\eps)}{\ln A_d(\eps)} = \lim\limits_{\eps \downarrow 0}\frac{\ln B_d(\eps)-\ln A_d(\eps)}{\ln A_d(\eps)}+1 = 1\,.
\ee
Hence, 
$$
\frac{\eps\cdot B_d(\eps)}{(\ln B_d(\eps))^{d-1}} = 
\frac{\eps\cdot A_d(\eps)}{(\ln A_d(\eps))^{d-1}}\cdot 
\left(\frac{\ln A_d(\eps)}{\ln B_d(\eps)}\right)^{d-1}\cdot
\frac{B_d(\eps)}{A_d(\eps)}\,.
$$
Together with \eqref{ws-19}, \eqref{Lem412}, and \eqref{f80} this implies
\be\label{ws-20_b}
  \lim_{\eps\downarrow 0}\frac{\eps\cdot B_d(\eps)}{(\ln B_d(\eps))^{d-1}}
= \lim_{\eps\downarrow 0}\frac{\eps\cdot A_d(\eps)}{(\ln A_d(\eps))^{d-1}}
=\frac{2^d}{(d-1)!}\,.
\ee


\subsubsection{The approximation numbers of $H^{s,+}_{\mix}(\T)$ in $L_2(\T)$}


In our first application of (\ref{ws-19}) we choose $b_{\ell} := (1+|\ell|^2)^{-1/2}$, $\ell\in \zz$. Then
\[
{\mathcal B}_d (\eps) := \Big\{k \in \Z: \: \prod_{j=1}^d b_{k_j} \ge \eps \Big\}
= \Big\{k \in \Z: \: 1/w_{1}^+(k) \ge \eps \Big\}\, , 
\]
for all $\varepsilon >0$\,.

\begin{cor}\label{lim2}
Let $d\in \N$. \\
{\rm (i)}
 Let $s>0$.  Then
 $$
 \lim_{n \to \infty} \, \frac{n^s\,a_{n} (I_d: \, H^{s,+}_\mix (\T) \to L_2 (\T))}{(\ln n)^{(d-1)s} \, } =
 \Big[\frac{2^{d}}{(d-1)!}\Big]^s \, .
 $$
{\rm (ii)}
 Let $s_0>s_1\ge 0$.  Then
 $$
 \lim_{n \to \infty} \, \frac{n^{s_0-s_1}\,a_{n} (I_d: \, H^{s_0,+}_\mix (\T) \to H^{s_1,+}_{\mix} (\T))}{(\ln n)^{(d-1){(s_0-s_1)}} \, } =
 \Big[\frac{2^{d}}{(d-1)!}\Big]^{s_0-s_1} \, .
 $$
\end{cor}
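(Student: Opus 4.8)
The plan is to deduce Corollary \ref{lim2} directly from the machinery already assembled, treating part (i) as the special case $s_1=0$ of a situation that is then promoted to general $s$ and general $s_1$ by the scaling and semigroup identities.

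\textbf{Part (i).} First I would fix $d\in\N$ and reduce to $s=1$: by Lemma \ref{scale} (applied via the factorization \eqref{eq072}) one has $a_n(I_d:H^{1,+}_{\mix}(\T)\to L_2(\T))^s=a_n(I_d:H^{s,+}_{\mix}(\T)\to L_2(\T))$, so the limit for general $s$ follows from the case $s=1$ by raising to the power $s$ and using continuity of $t\mapsto t^s$. For $s=1$, set $b_\ell:=(1+|\ell|^2)^{-1/2}$. This sequence satisfies the hypotheses of Lemma \ref{combi}: $0<b_\ell\le b_0=1$, and $a_\ell/b_\ell=(1+|\ell|^2)^{1/2}/(1+|\ell|)\to 1$ as $|\ell|\to\infty$. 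Hence the associated counting function $B_d(\eps)=\#\{k\in\Z:1/w_1^+(k)\ge\eps\}$ obeys \eqref{ws-20_b}, i.e. $\eps\,B_d(\eps)/(\ln B_d(\eps))^{d-1}\to 2^d/(d-1)!$. Now I would translate this into a statement about $a_n$. By Lemma \ref{sing} and \eqref{eq072}, $a_n:=a_n(I_d:H^{1,+}_{\mix}(\T)\to L_2(\T))$ equals the $n$-th term $\sigma_n$ of the non-increasing rearrangement of $(1/w_1^+(k))_{k\in\Z}$. For a given large $n$, let $\eps=\sigma_n$; then $B_d(\eps)\ge n$ while for every $\eps'>\eps$ one has $B_d(\eps')<n$, so $n$ is squeezed between $B_d(\sigma_n^+)$ and $B_d(\sigma_n)$ in the usual way. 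Concretely, sandwiching $n a_n/(\ln n)^{d-1}$ between $\eps\,B_d(\eps)/(\ln B_d(\eps))^{d-1}$ evaluated just below and at $\eps=\sigma_n$ — exactly the argument behind \eqref{eq28} — and using that $\ln n\sim\ln B_d(\sigma_n)$ because $n$ and $B_d(\sigma_n)$ differ by a bounded factor on each level (indeed $B_d$ jumps but $\ln$ absorbs bounded ratios), one gets $\lim_n n a_n/(\ln n)^{d-1}=2^d/(d-1)!$. Raising to the power $s$ finishes (i).

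\textbf{Part (ii).} Here I would invoke the semigroup/lifting identity \eqref{eq073}, which states
\[
a_n(I_d:H^{s_0,+}_{\mix}(\T)\to H^{s_1,+}_{\mix}(\T)) = a_n(I_d:H^{s_0-s_1,+}_{\mix}(\T)\to L_2(\T)).
\]
Writing $\sigma:=s_0-s_1>0$, part (i) with smoothness $\sigma$ gives
\[
\lim_{n\to\infty}\frac{n^{\sigma}\,a_n(I_d:H^{\sigma,+}_{\mix}(\T)\to L_2(\T))}{(\ln n)^{(d-1)\sigma}}=\Big[\frac{2^d}{(d-1)!}\Big]^{\sigma},
\]
which is exactly the claimed limit after substituting back $\sigma=s_0-s_1$. (The case $s_1=0$ is just part (i) itself; the identity \eqref{eq073} is what makes the general $s_1$ case free.)

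\textbf{Main obstacle.} Essentially all the analytic content — the super-exponential constant $2^d/(d-1)!$, the existence of the limit, the volume estimates for hyperbolic crosses — has been pushed into Theorem \ref{lim1}, Lemma \ref{combi}, and the derived relation \eqref{ws-20_b}. So the only real work in the proof of the corollary is the bookkeeping that converts the $\eps$-indexed asymptotics of $B_d(\eps)$ into the $n$-indexed asymptotics of $a_n=\sigma_n$. The subtle point is handling the ``plateaus'': since $w_1^+$ does not take all values $r^s$, $r\in\N$ (unlike $w_s^\#$), the sequence $\sigma_n$ is still piecewise constant but with less transparent jump locations, so one must argue that consecutive values of $B_d$ at the jump points of $\sigma_n$ have ratio tending to $1$ (or at least bounded), which is precisely the content of $\lim_{\eps\downarrow0}A_d(\eps)/A_d(\lambda\eps)=\lambda$ transferred to $B_d$ via \eqref{Lem412}. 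Once that is in hand the logarithmic factor is insensitive to the plateau and the sandwich closes. I expect this to be a short, mostly routine argument given everything that precedes it in the excerpt.
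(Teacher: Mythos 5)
Your proposal is correct and follows essentially the same route as the paper: reduce to $s=1$ by Lemma \ref{scale}, apply Lemma \ref{combi} with $b_\ell=(1+|\ell|^2)^{-1/2}$ to get \eqref{ws-20_b}, convert the $\eps$-asymptotics of $B_d$ into $n$-asymptotics of $a_n=\sigma_n$ by a sandwich, and dispose of (ii) via the semigroup identity \eqref{eq073}. The only difference is the plateau bookkeeping: the paper sidesteps your ``subtle point'' by sandwiching along the explicitly attained values $\eps_r=(1+r^2)^{-1/2}$ (attained at $k=(r,0,\dots,0)$), for which $\eps_{r-1}/\eps_r\to1$ is immediate, so that if $B_d(\eps_{r-1})\le n\le B_d(\eps_r)$ then $\eps_r\le a_n\le\eps_{r-1}$ and both sides of the sandwich converge to $2^d/(d-1)!$; your variant, working directly with $\eps=\sigma_n$ and transferring \eqref{ad} to $B_d$ via Lemma \ref{combi}, also closes, and one can even avoid discussing consecutive attained values altogether by fixing $\lambda>1$, noting $B_d(\lambda\sigma_n)<n\le B_d(\sigma_n)$, and letting $\lambda\downarrow1$ at the end. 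One caution: your parenthetical ``or at least bounded'' is not enough for the lower bound, since the ratio between $n$ and $B_d(\sigma_n)$ enters linearly in the numerator and only the logarithmic factor absorbs bounded ratios; you genuinely need the ratio (equivalently the jump ratio of $B_d$, or the ratio of consecutive attained weight values) to tend to $1$, which is exactly what the paper's choice of $\eps_r$, or your appeal to \eqref{ad}, delivers.
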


\bproof
It is enough to prove (i) for $s=1$. Indeed, then the known relation
$$
a_{n} (I_d: \, H^{s,+}_\mix (\T) \to L_2 (\T)) = a_{n} (I_d: \, H^{1,+}_\mix (\T) \to L_2 (\T))^s \,.
$$ 
implies (i) for arbitrary $s>0$, and the semigroup property of the weights yields (ii). 

\noindent
Setting $\varepsilon_r:=(1+r^2)^{-1/2}$ for $r\in\N_0$, we obviously have
$$
\{\varepsilon_r:\,r\in\N\}\subset\{1/w_1^+(k):\,k\in\Z\}\,,
$$
whence $a_n:=a_n(I_d: \, H^{1,+}_\mix (\T) \to L_2 (\T))=\varepsilon_r$\, if $n=B_d(\varepsilon_r)$\,. Consequently, if
$ B_d(\varepsilon_{r-1})\le n\le B_d(\varepsilon_r)$, then
$$
\varepsilon_r\le a_n\le\varepsilon_{r-1}\qquad\text{ and}\qquad 
\frac{\eps_r\cdot B_d(\eps_{r-1})}{(\ln B_d(\eps_{r-1}))^{d-1}}
\le\frac{n a_n}{(\ln n)^{d-1}}
\le\frac{\eps_{r-1}\cdot B_d(\eps_r)}{(\ln B_d(\eps_r))^{d-1}}\,.
$$
Since $\lim\limits_{r\to\infty} \varepsilon_{r-1}/\varepsilon_r =1$, a simple monotonicity argument and \eqref{ws-20_b} imply
$$
\lim\limits_{n\to\infty} \frac{n a_n}{(\ln n)^{d-1}}
=\lim_{\eps\downarrow 0}\frac{\eps\cdot B_d(\eps)}{(\ln B_d(\eps))^{d-1}}
= \lim_{\eps\downarrow 0}\frac{\eps\cdot A_d(\eps)}{(\ln A_d(\eps))^{d-1}}
=\frac{2^d}{(d-1)!}\,.
$$
\eproof

\noindent Corollary \ref{lim2} is the basis for the two-sided estimates of 
$a_{n} (I_d: \, H^{s,+}_\mix (\T) \to L_2 (\T))$
for large $n$ which we will study next.

\begin{satz}\label{gutt}
Let $s>0$ and  $d\in \N$. Then we have\\
 $$
  a_{n} (I_d: \, H^{s,+}_{\mix} (\T) \to L_2 (\T)) \le
\Big[\frac{(3\cdot \sqrt{2})^d}{(d-1)!}\Big]^s\frac{(\ln n)^{(d-1)s}}{n^s}\,, 
\qquad \mbox{if}\quad n \ge 27^d
 $$
and
$$
  a_{n} (I_d: \, H^{s,+}_{\mix} (\T) \to L_2 (\T)) \ge 
  \left[
  \frac{3}{d!}\, \Big(\frac{2}{2 + \ln 12}\Big)^{d}\right]^s\, \frac{(\ln n)^{(d-1)s}}{n^s} 
  \,,
\qquad 
\mbox{if}\quad n >  (12 \, e^2)^{d} \, .
 $$
\end{satz}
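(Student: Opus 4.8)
The plan is to transfer both inequalities from the corresponding estimates for the $\#$-norm in Theorem~\ref{gut} by means of an elementary two-sided comparison of the two weight families. Recall that
\[
w_{s}^\#(k) = \prod_{j=1}^d(1+|k_j|)^{s}\,,\qquad w_{s}^+(k) = \prod_{j=1}^d(1+|k_j|^2)^{s/2}\,,\qquad k\in\Z\,.
\]
For a single coordinate one has $1+|k_j|^2\le (1+|k_j|)^2\le 2(1+|k_j|^2)$, where the left inequality is trivial and the right one follows from $2|k_j|\le 1+|k_j|^2$. Taking the product over $j=1,\dots,d$ and raising to the power $s/2$ gives
\[
w_{s}^+(k)\ \le\ w_{s}^\#(k)\ \le\ 2^{ds/2}\,w_{s}^+(k)\,,\qquad k\in\Z\,.
\]

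For the lower bound, the inequality $w_{s}^+(k)\le w_{s}^\#(k)$ shows that the formal identity $H^{s,\#}_{\mix}(\T)\hookrightarrow H^{s,+}_{\mix}(\T)$ has norm one (this is part of Lemma~\ref{norm_one}). Factoring $I_d:H^{s,\#}_{\mix}(\T)\to L_2(\T)$ through $H^{s,+}_{\mix}(\T)$ and using multiplicativity of the approximation numbers yields
\[
a_{n}(I_d:\, H^{s,\#}_{\mix}(\T)\to L_2(\T))\ \le\ a_{n}(I_d:\, H^{s,+}_{\mix}(\T)\to L_2(\T))\,,
\]
so the asserted lower bound for $n>(12\,e^2)^d$ is exactly Theorem~\ref{gut}(ii). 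For the upper bound, recall from Lemma~\ref{sing} and \eqref{eq072} that the sequences $\bigl(a_{n}(I_d:H^{s,+}_{\mix}(\T)\to L_2(\T))\bigr)_n$ and $\bigl(a_{n}(I_d:H^{s,\#}_{\mix}(\T)\to L_2(\T))\bigr)_n$ are the non-increasing rearrangements of $(1/w_{s}^+(k))_{k\in\Z}$ and $(1/w_{s}^\#(k))_{k\in\Z}$, respectively. From $w_{s}^\#(k)\le 2^{ds/2}w_{s}^+(k)$ one gets $1/w_{s}^+(k)\le 2^{ds/2}/w_{s}^\#(k)$ pointwise, and since the non-increasing rearrangement is monotone with respect to pointwise domination we obtain, for every $n$,
\[
a_{n}(I_d:\, H^{s,+}_{\mix}(\T)\to L_2(\T))\ \le\ 2^{ds/2}\,a_{n}(I_d:\, H^{s,\#}_{\mix}(\T)\to L_2(\T))\,.
\]
Inserting the bound of Theorem~\ref{gut}(i), valid for $n\ge 27^d$, and using $2^{ds/2}\cdot 3^{ds}=(3\sqrt2)^{ds}$, produces the claimed upper bound with constant $\bigl[(3\sqrt2)^d/(d-1)!\bigr]^s$.

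I do not expect a genuine obstacle: the argument is routine once the weight comparison is written down. The only points to monitor are that the factor $2^{ds/2}$ is precisely what turns $3^d$ into $(3\sqrt2)^d$, and that the admissible ranges $n\ge 27^d$ and $n>(12\,e^2)^d$ carry over unchanged from Theorem~\ref{gut}, since the comparison of the rearrangements holds for all indices $n$ simultaneously. Alternatively one could first reduce to the case $s=1$ via Lemma~\ref{scale}, but the weight estimates above are already valid for all $s>0$ as they stand.
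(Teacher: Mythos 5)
Your proposal is correct and follows essentially the same route as the paper: both bounds are transferred from Theorem \ref{gut} using the norm-one embedding $H^{s,\#}_{\mix}(\T)\hookrightarrow H^{s,+}_{\mix}(\T)$ for the lower estimate and the factor $2^{ds/2}$ (which is exactly $\|I_d:H^{s,+}_{\mix}(\T)\to H^{s,\#}_{\mix}(\T)\|$) for the upper estimate. Your phrasing of the upper bound via monotonicity of non-increasing rearrangements under pointwise domination is just an equivalent formulation of the paper's multiplicativity argument with the factorization through $H^{s,\#}_{\mix}(\T)$, so the two proofs coincide in substance.
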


\bproof
By $I^j$, $j=1,2,3$, we denote identity mappings. \\
Since we have $\| \, I^1:H^{s,\#}_{\mix} (\T) \to H^{s,+}_{\mix} (\T)\| = 1$, the commutative diagram

\tikzset{node distance=5cm, auto}

\begin{center}
\begin{tikzpicture}
  \node (H) {$H^{s,\#}_{\mix}(\T)$};
  \node (L) [right of =H] {$H^{s,+}_{\mix}(\T)$};
  \node (L2) [right of =H, below of =H, node distance = 2.5cm ] {$L_2(\T)$};
    \draw[->] (H) to node {$I^1$} (L);
  \draw[->] (H) to node [swap] {$I^3$} (L2);
  \draw[->] (L) to node {$I^2$} (L2);
  \end{tikzpicture}
\end{center}
with $I^3= I^2 \circ I^1$ and basic properties of approximation numbers yields 
\[
a_{n} (I^3) \le \| \, I^1 \, |H^{s,\#}_{\mix} (\T) \to H^{s,+}_{\mix} (\T)\| 
\, a_{n} (I^2)\, .
\]
Supplemented by Theorem \ref{gut}/(ii) the lower estimate of  $a_{n} (I_d: \, H^{s,+} (\T) \to L_2 (\T))$ follows.
What concerns the upper bound we observe 
\[
\| \, I^1:H^{s,+}_{\mix} (\T) \to H^{s,\#}_{\mix} (\T)\| = 2^{ds/2}
\]
and consider the diagram
\begin{center}
\tikzset{node distance=5cm, auto}
\begin{tikzpicture}
  \node (H) {$H^{s,+}_{\mix}(\T)$};
  \node (L) [right of =H] {$H^{s,\#}_{\mix}(\T)$};
  \node (L2) [right of =H, below of =H, node distance = 2.5cm ] {$L_2(\T)$};
    \draw[->] (H) to node {$I^1$} (L);
  \draw[->] (H) to node [swap] {$I^3$} (L2);
  \draw[->] (L) to node {$I^2$} (L2);
  \end{tikzpicture}
\end{center}
with $I^3= I^2 \circ I^1$. This leads to 
\[
a_{n} (I^3) \le 2^{ds/2}\, \, a_{n} (I^2)\, .
\]
\eproof
\noindent Finally, we shall have a look at the behavior
of $a_{n} (I_d: \, H^{s,+}_\mix (\T) \to L_2 (\T))$
for small $n$. Recall that the quantity $\alpha (n,d)$ has been defined in \eqref{er-04}.\\

\begin{satz} \label{kleina}
Let $d\in \N$, $d\geq 2$, and $s>0$. Then for any $2\leq n \leq \frac{d}{2}4^d$ it holds the two-sided estimate
$$
    2^{-s}\Big(\frac{1}{n}\Big)^{\frac{s}{2+\alpha (n,d)}} \leq a_n(I_d:H^{s,+}_{\mix}(\T) \to L_2(\T)) \leq 
\Big(\frac{e^2}{n}\Big)^{\frac{s}{4+2\log_2 d}}\,.
$$
\end{satz}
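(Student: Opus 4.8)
The plan is to establish the two inequalities separately, reducing each to an already proved estimate for the $\#$-norm. For the upper bound I would invoke the norm one embedding $H^{s,+}_{\mix}(\T)\hookrightarrow H^{s/2,\#}_{\mix}(\T)$ of Lemma~\ref{norm_one}(v). Factorising $I_d:H^{s,+}_{\mix}(\T)\to L_2(\T)$ through $H^{s/2,\#}_{\mix}(\T)$, the multiplicativity of approximation numbers together with $\|I_d:H^{s,+}_{\mix}(\T)\to H^{s/2,\#}_{\mix}(\T)\|=1$ gives
\[
a_n(I_d:H^{s,+}_{\mix}(\T)\to L_2(\T))\le a_n(I_d:H^{s/2,\#}_{\mix}(\T)\to L_2(\T)).
\]
Applying Theorem~\ref{small} with the smoothness parameter $s/2$ in place of $s$ (its admissible range $1\le n\le\tfrac d2 4^d$ contains ours) bounds the right-hand side by $(e^2/n)^{(s/2)/(2+\log_2 d)}=(e^2/n)^{s/(4+2\log_2 d)}$, which is exactly the claimed upper estimate. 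This is the easy half.

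For the lower bound a plain transfer from the $\#$-space is not enough: although $H^{s,\#}_{\mix}(\T)\hookrightarrow H^{s,+}_{\mix}(\T)$ has norm one, Theorem~\ref{lowersmall} only yields $a_n\ge 2^{-s}n^{-s/\alpha(n,d)}$, and since $n^{-s/\alpha(n,d)}<n^{-s/(2+\alpha(n,d))}$ for $n\ge 2$ this falls short. Instead I would argue directly, first reducing to $s=1$ by the power relation $a_n(I_d:H^{s,+}_{\mix}(\T)\to L_2(\T))=a_n(I_d:H^{1,+}_{\mix}(\T)\to L_2(\T))^s$, which follows from the semigroup property of the weights $w_s^+$ (cf.\ \eqref{eq073}). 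By the lemma containing \eqref{f61}, the value $(1+r^2)^{-1/2}$ is attained by $a_n$ on the index interval $c^+_1((1+r^2)^{1/2},d)<n\le C^+_1((1+r^2)^{1/2},d)$; since $a_n$ is non-increasing and $c^+_1<C^+_1$, choosing $r=r(n)$ to be the least integer with $C^+_1((1+r^2)^{1/2},d)\ge n$ gives $a_n\ge(1+r^2)^{-1/2}$ together with $C^+_1((1+(r-1)^2)^{1/2},d)<n$, i.e.
\[
n>\#\Big\{k\in\Z:\ \prod_{j=1}^d(1+k_j^2)\le 1+(r-1)^2\Big\}.
\]
As $(1+r^2)^{1/2}\le r+1$ (the cases $r\in\{0,1\}$ being trivial since then $a_n\ge 2^{-1/2}$), the matter reduces to showing that this lower bound on $n$ forces $r\lesssim n^{1/(2+\alpha(n,d))}$.

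To do this I would run the argument of the proof of Theorem~\ref{lowersmall}, splitting the lattice points according to the number $\ell$ of non-zero coordinates as in \eqref{eq21}. The essential new ingredient is the square-root analogue of \eqref{er-01}: for $\ell\ge1$ and $B\ge 2^{\ell-1}$,
\[
\#\big\{k\in\N^\ell:\ k_2=\dots=k_\ell=1,\ (1+k_1^2)\,2^{\ell-1}\le B\big\}=\big[\sqrt{B\,2^{-\ell+1}-1}\,\big],
\]
so a distinguished ``large'' coordinate now contributes about $\sqrt{B\,2^{-\ell+1}}$ admissible values instead of about $B\,2^{-\ell+1}$. Taking $B=1+(r-1)^2$ and $m$ with $2^m\le r-1<2^{m+1}$, so that $B$ is of order $2^{2m}$, then estimating $\binom d\ell\ge\binom{2m}{\ell}(d/2m)^\ell$ and summing the binomial series by means of $\sum_\ell\binom N\ell\ell x^\ell=Nx(1+x)^{N-1}$, one is led to
\[
n>\#\Big\{k\in\Z:\ \prod_{j=1}^d(1+k_j^2)\le B\Big\}\ \gtrsim\ 2^m\,d\,\Big(1+\frac{\sqrt2\,d}{2m}\Big)^{2m-1}.
\]
Since the budget $B$ is now of size $r^2$, i.e.\ it carries about $2m$ dyadic digits rather than the $m$ digits that appear in Theorem~\ref{lowersmall}, taking $\log_2$ and bootstrapping $m$ against $\log_2 n$ exactly as in the passage to $\alpha(n,d)$ there produces $\log_2 n>m\,(2+\alpha(n,d))$, hence $2^m<n^{1/(2+\alpha(n,d))}$ and finally $a_n\ge(1+r^2)^{-1/2}\ge\tfrac12\,n^{-1/(2+\alpha(n,d))}$; that $r$ remains $\le 2^d$ throughout the range $2\le n\le\tfrac d2 4^d$ is precisely \eqref{4.19}. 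I expect this last combinatorial step to be the main obstacle: one has to keep sharp track of the floor functions, of the admissible range of $\ell$ (only $\ell\lesssim 2m$ really contribute), and of the various lower-order corrections, so that the exponent comes out as $2+\alpha(n,d)$ exactly, and not merely up to a constant.
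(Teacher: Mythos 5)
Your upper bound is exactly the paper's argument: factor $I_d$ through $H^{s/2,\#}_{\mix}(\T)$ via the norm-one embedding of Lemma~\ref{norm_one}/(v) and apply Theorem~\ref{small} with smoothness $s/2$; nothing to add there. For the lower bound, however, you should be aware that the paper's own proof is precisely the transfer you discard: it consists of the diagram $H^{s,\#}_{\mix}(\T)\hookrightarrow H^{s,+}_{\mix}(\T)\to L_2(\T)$ together with Lemma~\ref{norm_one}/(i)--(iii) and Theorem~\ref{lowersmall}, nothing more. Your arithmetic point is correct -- this route only yields $a_n\ge 2^{-s}n^{-s/\alpha(n,d)}$, which for $n\ge 2$ is strictly weaker than the displayed bound with exponent $s/(2+\alpha(n,d))$. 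So what you have really uncovered is a mismatch between the statement of Theorem~\ref{kleina} and the argument the paper gives for it (the proof supports the lower bound with $\alpha(n,d)$ in place of $2+\alpha(n,d)$); you are attempting to prove a strictly stronger inequality than the paper actually establishes.

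That attempt, as you concede, is not carried out, and the gap is genuine: with the counting bound you propose it cannot be closed. Your lower estimate for $\#\{k\in\Z:\prod_j(1+k_j^2)\le B\}$ uses only points with one free coordinate and all other nonzero coordinates equal to $1$, giving at best something of size $\sqrt{2B}\,\sum_{\ell}\binom d\ell \ell(\sqrt2)^{\ell}\le \sqrt{2B}\,\sqrt2\,d\,(1+\sqrt2)^{d-1}$. At the endpoint $n=\tfrac d2 4^d$ (where moreover $2m$ exceeds $d$, so the comparison $\binom d\ell\ge\binom{2m}{\ell}(d/(2m))^{\ell}$ you invoke is unavailable and the sum is capped at $\ell\le d$), the inequality $n>\Phi(B)$ with this $\Phi$ only forces $\sqrt B\lesssim n/(d(1+\sqrt2)^{d-1})$, i.e. it certifies $a_n\gtrsim \bigl((1+\sqrt2)/4\bigr)^{d}$ up to harmless factors, whereas the claimed bound there is of order $n^{-s/4}\sim 2^{-sd/2}$; since $(1+\sqrt2)/4<2^{-1/2}$, your witness family falls short by an exponential factor, so the asserted implication $\log_2 n>m\,(2+\alpha(n,d))$ is not merely unverified but false under your estimate. (Two further loose ends: even in the regime $d\ge 2m$ the available gain $2m\log_2(1+\sqrt2 d/(2m))+m+\log_2 d$ against the required $4m+m\log_2(d/\log_2 n+\tfrac12)$ is marginal, and since here $a_n\ge(1+r^2)^{-s/2}$ with $r\le 2^{m+1}$ the constant comes out slightly below $2^{-s}$.) To prove the stated exponent one would need a substantially richer count of the set $\{k:\prod_j(1+k_j^2)\le B\}$ -- its cardinality near $B\sim 2^d$ is driven by points with many coordinates of modulus $\ge 2$, not by a single large coordinate; alternatively, prove the theorem as the paper's own argument actually does, with $\alpha(n,d)$ in the exponent.
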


\bproof The upper bound is a direct consequence of basic properties of approximation numbers, see Subsection \ref{parapp}, 
Lemma\ \ref{norm_one}/(v), Theorem\ \ref{small} and the 
first commutative diagram below, where $I^3= I^2 \circ I^1$.

\begin{minipage}[u]{7cm}
\tikzset{node distance=5cm, auto}
\begin{tikzpicture}
  \node (H) {$H^{s,+}_{\mix}(\T)$};
  \node (L) [right of =H] {$H^{s/2,\#}_{\mix}(\T)$};
  \node (L2) [right of =H, below of =H, node distance = 2.5cm ] {$L_2(\T)$};
    \draw[->] (H) to node {$I^1$} (L);
  \draw[->] (H) to node [swap] {$I^3$} (L2);
  \draw[->] (L) to node {$I^2$} (L2);
  \end{tikzpicture}
\end{minipage}
\begin{minipage}[u]{7cm}
 \tikzset{node distance=5cm, auto}
\begin{tikzpicture}
  \node (H) {$H^{s,\#}_{\mix}(\T)$};
  \node (L) [right of =H] {$H^{s,+}_{\mix}(\T)$};
  \node (L2) [right of =H, below of =H, node distance = 2.5cm ] {$L_2(\T)$};
    \draw[->] (H) to node {$I^4$} (L);
  \draw[->] (H) to node [swap] {$I^5$} (L2);
  \draw[->] (L) to node {$I^3$} (L2);
  \end{tikzpicture}
\end{minipage}\\
The lower bound follows from 
Lemma\ \ref{norm_one}/(i)-(iii), Theorem\ \ref{lowersmall} and 
the second commutative diagram above, where $I^5= I^3 \circ I^4$.
\eproof


\subsubsection{The approximation numbers of $H^{s,*}_{\mix}(\T)$ in $L_2(\T)$}


In our second application of (\ref{ws-19}) we choose $b_{\ell} := (1+|\ell|^{2s})^{-1/(2s)}$. This leads to 
\[
{\mathcal B}_d (\eps) := \Big\{k \in \Z: \: \prod_{j=1}^d b_{k_j} \ge \eps \Big\}
= \Big\{k \in \Z: \: 1/w_{s}^* (k) \ge \eps^{s} \Big\}\,.
\]
Due to the missing semigroup property we have to deal now with all $s>0$, not only with $s=1$.
But nevertheless we can proceed similarly as in the previous subsection. 

\begin{cor}\label{lim3}
Let $d\in \N$ and  $s>0$.  Then
 $$
 \lim_{n \to \infty} \, \frac{n^s\,a_{n} (I_d: \, H^{s,*}_\mix (\T) \to L_2 (\T))}{(\ln n)^{(d-1)s} \, } =
 \Big[\frac{2^{d}}{(d-1)!}\Big]^s \, .
 $$
\end{cor}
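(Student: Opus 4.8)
The plan is to mirror, essentially verbatim, the proof of Corollary~\ref{lim2}, the one crucial difference being that the weights $w^*_s$ do not enjoy the semigroup property, so the reduction to $s=1$ is no longer available and the smoothness parameter must be carried along throughout. Luckily, the parameter has already been absorbed into the sequence $b_\ell:=(1+|\ell|^{2s})^{-1/(2s)}$ fixed just before the statement, and this is what makes the argument go through for all $s>0$ at once.

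The first step is to check that this $(b_\ell)_{\ell\in\zz}$ meets the hypotheses of Lemma~\ref{combi}: we have $b_0=1$ and $0<b_\ell<1$ for $\ell\neq 0$, and since $(1+|\ell|^{2s})^{1/(2s)}=|\ell|\,(1+|\ell|^{-2s})^{1/(2s)}$ for $\ell\neq 0$, we get $a_\ell/b_\ell=(1+|\ell|^{2s})^{1/(2s)}/(1+|\ell|)\to 1$ as $|\ell|\to\infty$. Hence \eqref{ws-20_b} applies to the associated counting function $B_d(\eps)$, yielding $\eps\,B_d(\eps)/(\ln B_d(\eps))^{d-1}\to 2^d/(d-1)!$ as $\eps\downarrow 0$, and in particular $B_d(\eps)\to\infty$. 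Next I would reconnect with the approximation numbers: by Lemma~\ref{sing} and the factorization principle behind \eqref{eq072} (applied with the target space $L_2(\T)$), the number $a_n:=a_n(I_d:H^{s,*}_{\mix}(\T)\to L_2(\T))$ is the $n$-th term of the non-increasing rearrangement of $(1/w^*_s(k))_{k\in\Z}$, so that $a_n^{1/s}$ is the $n$-th term of the non-increasing rearrangement of $(\prod_{j}b_{k_j})_{k\in\Z}$. Putting $\eps_r:=b_r=(1+r^{2s})^{-1/(2s)}$ for $r\in\N_0$, the value $\eps_r^s=(1+r^{2s})^{-1/2}$ is attained by $1/w^*_s$ at $k=(r,0,\dots,0)$, and the description of $\mathcal B_d(\eps)$ preceding the statement gives $B_d(\eps_r)=\#\{k\in\Z:1/w^*_s(k)\ge\eps_r^s\}$; consequently $a_{B_d(\eps_r)}=\eps_r^s$, and more generally $\eps_r^s\le a_n\le\eps_{r-1}^s$ whenever $B_d(\eps_{r-1})\le n\le B_d(\eps_r)$.

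The final step is the squeeze. Since the $B_d(\eps_r)$ increase to $\infty$, every large $n$ lies in an interval $[B_d(\eps_{r-1}),B_d(\eps_r)]$ with $r=r(n)\to\infty$. Using that $t\mapsto t/(\ln t)^{d-1}$ is increasing on $[e^{d-1},\infty)$ together with $\eps_r\le a_n^{1/s}\le\eps_{r-1}$, one obtains
\[
\frac{\eps_r}{\eps_{r-1}}\cdot\frac{\eps_{r-1}\,B_d(\eps_{r-1})}{(\ln B_d(\eps_{r-1}))^{d-1}}\ \le\ \frac{n\,a_n^{1/s}}{(\ln n)^{d-1}}\ \le\ \frac{\eps_{r-1}}{\eps_r}\cdot\frac{\eps_r\,B_d(\eps_r)}{(\ln B_d(\eps_r))^{d-1}}\,.
\]
As $r\to\infty$ we have $\eps_{r-1}/\eps_r=(1+r^{2s})^{1/(2s)}/(1+(r-1)^{2s})^{1/(2s)}\to 1$, while the two factors of the form $\eps_r\,B_d(\eps_r)/(\ln B_d(\eps_r))^{d-1}$ converge to $2^d/(d-1)!$ by the first step; hence $n\,a_n^{1/s}/(\ln n)^{d-1}\to 2^d/(d-1)!$, and raising to the power $s$ gives the asserted limit $[2^d/(d-1)!]^s$.

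I do not anticipate a genuine obstacle: the proof is a direct transcription of that of Corollary~\ref{lim2}. The only two points that need attention are verifying the hypotheses of Lemma~\ref{combi} for this particular $(b_\ell)$ (so that \eqref{ws-20_b} is legitimately invoked), and making sure the entire chain is carried out for general $s>0$ rather than reduced to $s=1$, the latter being impossible for the $*$-norm because of the missing semigroup property.
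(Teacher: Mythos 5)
Your proposal is correct and follows essentially the same route as the paper's proof of Corollary \ref{lim3}: invoke \eqref{ws-20_b} for $b_\ell=(1+|\ell|^{2s})^{-1/(2s)}$, sandwich $a_n^{1/s}$ between $\eps_r$ and $\eps_{r-1}$ on the interval $B_d(\eps_{r-1})\le n\le B_d(\eps_r)$, and pass to the limit by monotonicity of $t\mapsto t/(\ln t)^{d-1}$ before raising to the power $s$. Your added verifications (the hypotheses of Lemma \ref{combi}, the rearrangement identification, and the correct orientation of the interval of $n$) are exactly the details the paper leaves implicit.
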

\bproof
Again \eqref{ws-20_b}
leads to 
$$
\lim\limits_{\eps\downarrow 0} \frac{\eps B_d(\eps)}{(\ln B_d(\eps))^{d-1}} = \frac{2^d}{(d-1)!}\,.
$$
Setting $\eps_r:=(1+ r^{2s})^{-1/2s}$, we have $\lim_{r\to\infty}\frac{\eps_r}{\eps_{r-1}}$\,, and 
$$
\{\eps_r^s: r\in\N_0\}\subset\{1/w_s^*(k): k\in\Z\}\,.
$$
Therefore, $\eps_r^s\le a_n\le \eps_{r-1}$\,, if $B_d(\eps_r)\le n \le B_d(\eps_{r-1})$. This gives
$$
\eps_r\le a_n^{1/s}\le \eps_{r-1}\qquad\mbox{and}\qquad
\frac{\eps_r\cdot B_d(\eps_{r-1})}{(\ln B_d(\eps_{r-1}))^{d-1}}
\le\frac{n a_n^{1/s}}{(\ln n)^{d-1}}
\le\frac{\eps_{r-1}\cdot B_d(\eps_r)}{(\ln B_d(\eps_r))^{d-1}}\,.
$$
Exactly the same arguments as in the proof of Theorem\ \ref{lim2} imply 
$$
\lim\limits_{n\to\infty} \frac{n a_n^{1/s}}{(\ln n)^{d-1}}
=\lim_{\eps\downarrow 0}\frac{\eps\cdot B_d(\eps)}{(\ln B_d(\eps))^{d-1}}
=\frac{2^d}{(d-1)!}\,,
$$
and this is equivalent to our assertion.
\eproof.

\noindent
Based on Corollary \ref{lim3} we can derive two-sided estimates of 
$a_{n} (I_d: \, H^{s,*}_\mix (\T) \to L_2 (\T))$
for large $n$.

\begin{satz}\label{guttt}
Let  $d\in \N$.\\
(i) Let $s>1/2$. Then
 $$
  a_{n} (I_d: \, H^{s,*}_{\mix} (\T) \to L_2 (\T)) \le 2^{-d/2}\, 
\Big[\frac{6^d}{(d-1)!}\Big]^s\,\frac{(\ln n)^{(d-1)s}}{n^s} \, 
\qquad \mbox{if}\quad n \ge 27^d \, ,
 $$
and 
 $$
  a_{n} (I_d: \, H^{s,*}_{\mix} (\T) \to L_2 (\T)) \ge \Big[
 \frac{3\cdot 2^d}{ d! \, (2+\ln 12)^d} \, \Big]^s \, \frac{(\ln n)^{(d-1)s}}{n^s}
\qquad 
\mbox{if}\quad n >  (12 \, e^{2})^d \, .
 $$
(ii)
Let $0 < s \le 1/2$. Then
 $$
  a_{n} (I_d: \, H^{s,*}_{\mix} (\T) \to L_2 (\T)) \le
\Big[\frac{3^d}{(d-1)!}\Big]^s\,  \frac{(\ln n)^{(d-1)s}}{n^s} \, \qquad 
\mbox{if}\quad n \ge 27^d \, ,
 $$
and 
 $$
  a_{n} (I_d: \, H^{s,*}_{\mix} (\T) \to L_2 (\T)) \ge 2^{-d/2}\, 
\Big[
\frac{3\cdot 4^d}{ d! \, (2+\ln 12)^d} \, \Big]^s \, \frac{(\ln n)^{(d-1)s}}{n^s} 
\qquad 
\mbox{if}\quad n >  (12 \, e^{2})^d \, .
 $$
\end{satz}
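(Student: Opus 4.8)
The plan is to transfer the estimates of Theorem~\ref{gut} for the $\#$-norm to the $*$-norm by factoring the relevant identity operator through $H^{s,\#}_{\mix}(\T)$, in the same spirit as the proof of Theorem~\ref{gutt}. Writing the identity $I_d: H^{s,*}_{\mix}(\T)\to L_2(\T)$ as the composition $H^{s,*}_{\mix}(\T)\to H^{s,\#}_{\mix}(\T)\to L_2(\T)$, and $I_d: H^{s,\#}_{\mix}(\T)\to L_2(\T)$ as $H^{s,\#}_{\mix}(\T)\to H^{s,*}_{\mix}(\T)\to L_2(\T)$, the multiplicativity of approximation numbers ($a_n(T_2\circ T_1)\le\|T_1\|\,a_n(T_2)$) reduces everything to three ingredients: (a) the values of $a_n(I_d: H^{s,\#}_{\mix}(\T)\to L_2(\T))$ provided by Theorem~\ref{gut}; (b) the norm-one embeddings between the $\#$- and $*$-scales from Lemma~\ref{norm_one}; and (c) the operator norms of the two ``reverse'' embeddings $H^{s,*}_{\mix}(\T)\to H^{s,\#}_{\mix}(\T)$ and $H^{s,\#}_{\mix}(\T)\to H^{s,*}_{\mix}(\T)$.

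Step (c) is the only genuine computation. By the diagonal structure of these embeddings explained in Subsection~\ref{parapp}, their norms are the $d$-th powers of $\sup_{m\in\N_0}\frac{(1+m)^{s}}{(1+m^{2s})^{1/2}}$ and of the supremum of the reciprocal, respectively. Writing $(1+x)^{s}(1+x^{2s})^{-1/2}=\exp(g(x)/2)$ with $g(x):=2s\ln(1+x)-\ln(1+x^{2s})$ on $[0,\infty)$, an elementary analysis (the equation $g'(x)=0$ reduces to $x^{2s-1}=1$) shows that $g$ has a unique critical point at $x=1$, that $g(0)=\lim_{x\to\infty}g(x)=0$, and that $g(1)=(2s-1)\ln 2$; hence $x=1$ is the global maximum of $g$ when $s>1/2$ and the global minimum when $s<1/2$. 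Since $1\in\N_0$, passing from these continuous suprema to suprema over $m\in\N_0$ loses nothing, and we obtain
\begin{align*}
\|I_d: H^{s,*}_{\mix}(\T)\to H^{s,\#}_{\mix}(\T)\|&=\big(2^{2s-1}\big)^{d/2}\qquad\text{for } s\ge 1/2\,,\\
\|I_d: H^{s,\#}_{\mix}(\T)\to H^{s,*}_{\mix}(\T)\|&=\big(2^{1-2s}\big)^{d/2}\qquad\text{for } 0<s\le 1/2\,.
\end{align*}
(At $s=1/2$ both are one, in accordance with $\|\cdot|H^{1/2}_{\mix}(\T)\|^{*}=\|\cdot|H^{1/2}_{\mix}(\T)\|^{\#}$.) Additionally, Lemma~\ref{norm_one}(i),(ii) gives that $H^{s,\#}_{\mix}(\T)\hookrightarrow H^{s,*}_{\mix}(\T)$ has norm one for all $s\ge 1/2$, and Lemma~\ref{norm_one}(iii) gives that $H^{s,*}_{\mix}(\T)\hookrightarrow H^{s,\#}_{\mix}(\T)$ has norm one for all $0<s\le 1/2$.

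It remains to assemble the pieces. For $s>1/2$ (part (i)): the upper bound follows from $a_n(I_d: H^{s,*}_{\mix}(\T)\to L_2(\T))\le 2^{d(s-1/2)}\,a_n(I_d: H^{s,\#}_{\mix}(\T)\to L_2(\T))$, Theorem~\ref{gut}(i), and the identity $2^{d(s-1/2)}\big[3^{d}/(d-1)!\big]^{s}=2^{-d/2}\big[6^{d}/(d-1)!\big]^{s}$; the lower bound follows from the norm-one embedding $H^{s,\#}_{\mix}(\T)\hookrightarrow H^{s,*}_{\mix}(\T)$ and Theorem~\ref{gut}(ii), noting that $\frac{3}{d!}\big(\frac{2}{2+\ln 12}\big)^{d}=\frac{3\cdot 2^{d}}{d!\,(2+\ln 12)^{d}}$. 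For $0<s\le 1/2$ (part (ii)): the upper bound follows from the norm-one embedding $H^{s,*}_{\mix}(\T)\hookrightarrow H^{s,\#}_{\mix}(\T)$ and Theorem~\ref{gut}(i); the lower bound follows from $a_n(I_d: H^{s,*}_{\mix}(\T)\to L_2(\T))\ge 2^{-d(1/2-s)}\,a_n(I_d: H^{s,\#}_{\mix}(\T)\to L_2(\T))$, Theorem~\ref{gut}(ii), and the identity $2^{-d(1/2-s)}\big[\frac{3}{d!}\big(\frac{2}{2+\ln 12}\big)^{d}\big]^{s}=2^{-d/2}\big[\frac{3\cdot 4^{d}}{d!\,(2+\ln 12)^{d}}\big]^{s}$. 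The admissible ranges $n\ge 27^{d}$ and $n>(12e^{2})^{d}$ carry over verbatim from Theorem~\ref{gut}, since the auxiliary embeddings do not involve $n$.

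The main point is step (c): one must check that the extremum of $(1+x)^{s}(1+x^{2s})^{-1/2}$ over $x\ge 0$ is attained at the integer $x=1$, so that replacing the continuous supremum by the supremum over $m\in\N_0$ costs nothing. Everything else is bookkeeping with commutative triangles and the multiplicativity of approximation numbers.
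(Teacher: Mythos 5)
Your proposal is correct and follows essentially the same route as the paper: factor the $*$-embedding through $H^{s,\#}_{\mix}(\T)$ in both directions, invoke Theorem \ref{gut}, and use the norm-one embeddings together with the reverse embedding norms $2^{|s-1/2|d}$, with the same case split at $s=1/2$ and the same constant bookkeeping. The only (welcome) addition is that you verify explicitly, via the one-dimensional extremum at $x=1$, the embedding norms $2^{(s-1/2)d}$ and $2^{(1/2-s)d}$ that the paper states without computation.
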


\bproof
We distinguish two cases: $s>1/2$ and $0 < s \le 1/2$.
\\
{\em Step 1.} Let $s>1/2$. Then
\[
\|I^1:H^{s,\#}_{\mix} (\T) \to H^{s,*}_{\mix} (\T)\| = 1 \qquad \mbox{and}
\qquad \|I^4:H^{s,*}_{\mix} (\T) \to H^{s,\#}_{\mix} (\T)\| = 2^{(s-1/2)d}\, .
\]
In view of the diagrams 

\begin{minipage}[u]{7cm}
\tikzset{node distance=5cm, auto}
\begin{tikzpicture}
  \node (H) {$H^{s,\#}_{\mix}(\T)$};
  \node (L) [right of =H] {$H^{s,*}_{\mix}(\T)$};
  \node (L2) [right of =H, below of =H, node distance = 2.5cm ] {$L_2(\T)$};
    \draw[->] (H) to node {$I^1$} (L);
  \draw[->] (H) to node [swap] {$I^3$} (L2);
  \draw[->] (L) to node {$I^2$} (L2);
  \end{tikzpicture}
\end{minipage} \hfill
\begin{minipage}[u]{7cm}
\tikzset{node distance=5cm, auto}
\begin{tikzpicture}
  \node (H) {$H^{s,*}_{\mix}(\T)$};
  \node (L) [right of =H] {$H^{s,\#}_{\mix}(\T)$};
  \node (L2) [right of =H, below of =H, node distance = 2.5cm ] {$L_2(\T)$};
    \draw[->] (H) to node {$I^4$} (L);
  \draw[->] (H) to node [swap] {$I^2$} (L2);
  \draw[->] (L) to node {$I^3$} (L2);
\end{tikzpicture}
\end{minipage}\\
with $I^3= I^2 \circ I^1$, $I^2= I^3 \circ I^4$, this yields
\[
a_{n} (I^3) \le \, a_{n} (I^2) \le 2^{(s-1/2)d}\, a_n (I^3)\, .
\]
Now the claimed estimates follow from  Theorem \ref{gut}.
\\
{\em Step 2.} Let $0 < s \le 1/2$. Then
\[
\|I^1:H^{s,\#}_{\mix} (\T) \to H^{s,*}_{\mix} (\T)\| = 2^{(\frac 12 - s )d} \qquad \mbox{and}
\qquad \|I^4:H^{s,*}_{\mix} (\T) \to H^{s,\#}_{\mix} (\T)\| = 1\, .
\]
Employing the same diagrams as in Step 1 we conclude
\[
a_{n} (I^2) \le \, a_{n} (I^3) \le 2^{(\frac 12-s)d}\, a_n (I^2)\, .
\]
Now the claimed estimates follow from  Theorem \ref{gut}.
\eproof

\noindent Again, as the last step in this subsection, we shall consider the behavior
of $a_{n} (I_d: \, H^{s,*}_\mix (\T) \to L_2 (\T))$
for small $n$. This time we have only partial results.
\\

\begin{satz}\label{kleinb} 
Let $d\in \N$, $d\geq 2$, and $1/2 \le s \le 1$. Then for any $2\leq n \leq \frac{d}{2}4^d$ it holds the two-sided estimate
$$
    2^{-s}\Big(\frac{1}{n}\Big)^{\frac{s}{2+\alpha (n,d)}} \leq a_n(I_d:H^{s,*}_{\mix}(\T) \to L_2(\T)) \leq 
\Big(\frac{e^2}{n}\Big)^{\frac{s}{4+\log_2 (d^2)}}\,.
$$
\end{satz}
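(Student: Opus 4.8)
The plan is to reduce both inequalities to the already established $\#$-results (Theorems~\ref{small} and~\ref{lowersmall}) by the same factorisation device that was used in the proof of Theorem~\ref{kleina}. Recall that approximation numbers are monotone along norm one embeddings: if $\|I:X\to Y\|=1$, then writing $I_{X\to Z}=I_{Y\to Z}\circ I_{X\to Y}$ and using the multiplicativity of the approximation numbers one gets $a_n(I:X\to Z)\le a_n(I:Y\to Z)$. The input that makes the transfer work in the stated generality is Lemma~\ref{norm_one}/(ii): it holds exactly for $1/2\le s\le 1$ and provides the norm one chain
\[
 H^{s,\#}_{\mix}(\T)\ \hookrightarrow\ H^{s,*}_{\mix}(\T)\ \hookrightarrow\ H^{s,+}_{\mix}(\T)\,.
\]

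For the upper bound I would use the right half of this chain: $H^{s,*}_{\mix}(\T)\hookrightarrow H^{s,+}_{\mix}(\T)$ has norm one, hence
\[
 a_n(I_d:H^{s,*}_{\mix}(\T)\to L_2(\T))\ \le\ a_n(I_d:H^{s,+}_{\mix}(\T)\to L_2(\T))\ \le\ \Big(\frac{e^2}{n}\Big)^{\frac{s}{4+\log_2(d^2)}}
\]
by Theorem~\ref{kleina} (valid for $d\ge 2$ and $2\le n\le\frac d2 4^d$, and $\frac{s}{4+2\log_2 d}=\frac{s}{4+\log_2(d^2)}$). Alternatively, composing with Lemma~\ref{norm_one}/(v) one lands in $H^{s/2,\#}_{\mix}(\T)$ and applies Theorem~\ref{small} with smoothness $s/2$, which yields the same exponent since $\frac{s/2}{2+\log_2 d}=\frac{s}{4+\log_2(d^2)}$; this is in fact the route underlying Theorem~\ref{kleina}.

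For the lower bound I would use the left half of the chain: since $H^{s,\#}_{\mix}(\T)\hookrightarrow H^{s,*}_{\mix}(\T)$ has norm one,
\[
 a_n(I_d:H^{s,*}_{\mix}(\T)\to L_2(\T))\ \ge\ a_n(I_d:H^{s,\#}_{\mix}(\T)\to L_2(\T))\,,
\]
and Theorem~\ref{lowersmall} (again valid for $d\ge 2$, $2\le n\le\frac d2 4^d$) supplies the bound expressed through $\alpha(n,d)$; the constant $2^{-s}$ is untouched because the embedding constant equals $1$. Note that the reduction to the case $s=1$, available for the $\#$- and $+$-norms, cannot be used here, since the $*$-weights $w^*_s$ lack the semigroup property — but this is harmless, because Lemma~\ref{norm_one}/(ii) and Theorems~\ref{small}, \ref{lowersmall} are applied at the given $s$ directly.

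I expect no serious obstacle: once the chain of Lemma~\ref{norm_one}/(ii) is in place, the proof is a short diagram chase exactly as for Theorem~\ref{kleina}. The only point that really needs care — and the reason the statement is confined to $1/2\le s\le 1$ — is that the transfer must proceed through norm one embeddings (the equivalence constants between these norms are exponential in $d$), and it is precisely in the range $1/2\le s\le 1$ that $H^{s,*}_{\mix}(\T)$ sits between $H^{s,\#}_{\mix}(\T)$ and $H^{s,+}_{\mix}(\T)$ with unit constants in the order that feeds Theorems~\ref{small} and~\ref{lowersmall}. For $s>1$ or $s<1/2$ the order of the three spaces changes (Lemma~\ref{norm_one}/(i),(iii)), so this particular two-sided transfer breaks down, which is why only partial results are stated there.
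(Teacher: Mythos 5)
Your argument coincides with the paper's own proof: the upper bound is obtained from the norm-one embedding $H^{s,*}_{\mix}(\T)\hookrightarrow H^{s,+}_{\mix}(\T)$ of Lemma~\ref{norm_one}/(ii) together with Theorem~\ref{kleina}, and the lower bound from the norm-one embedding $H^{s,\#}_{\mix}(\T)\hookrightarrow H^{s,*}_{\mix}(\T)$ together with Theorem~\ref{lowersmall}, which is exactly the diagram chase the paper performs. The only caveat, which you inherit from the paper rather than introduce, is that Theorem~\ref{lowersmall} delivers the exponent $s/\alpha(n,d)$, so the exponent $s/(2+\alpha(n,d))$ appearing in the statement is matched only up to this (apparently notational) discrepancy in the paper itself.
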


\bproof 
We argue as in the proof of Theorem\ \ref{kleina}.
The upper bound is a consequence of Lemma\ \ref{norm_one},(ii) together with Theorem\ \ref{kleina} and the first commutative diagram

\begin{minipage}[u]{7cm}
\tikzset{node distance=5cm, auto}
\begin{tikzpicture}
  \node (H) {$H^{s,*}_{\mix}(\T)$};
  \node (L) [right of =H] {$H^{s,+}_{\mix}(\T)$};
  \node (L2) [right of =H, below of =H, node distance = 2.5cm ] {$L_2(\T)$};
    \draw[->] (H) to node {$I^1$} (L);
  \draw[->] (H) to node [swap] {$I^3$} (L2);
  \draw[->] (L) to node {$I^2$} (L2);
  \end{tikzpicture}
\end{minipage} \hfill
\begin{minipage}[u]{7cm}
\tikzset{node distance=5cm, auto}
\begin{tikzpicture}
  \node (H) {$H^{s,\#}_{\mix}(\T)$};
  \node (L) [right of =H] {$H^{s,*}_{\mix}(\T)$};
  \node (L2) [right of =H, below of =H, node distance = 2.5cm ] {$L_2(\T)$};
    \draw[->] (H) to node {$I^4$} (L);
  \draw[->] (H) to node [swap] {$I^5$} (L2);
  \draw[->] (L) to node {$I^3$} (L2);
\end{tikzpicture}
\end{minipage}\\
with $I^3= I^2 \circ I^1$. 
The lower bound follows from
Lemma\ \ref{norm_one}, (ii), Theorem\ \ref{lowersmall} and 
the second commutative diagram using $I^5 = I^3 \circ I^4$.
\eproof


\subsection{The approximation numbers of $H^{m}_{\mix}(\T)$ in $L_2 (\T)$}

By setting $b_{\ell} = v_m(\ell)^{1/m}$, see \eqref{f61}, we could argue similar as in the previous subsection 
to compute $\lim_{n\to \infty} n^m\cdot a_n/(\ln n)^{(d-1)m}$.
However, Lemma \ref{norm_two} provides a much simpler argument.

\begin{cor}\label{lim4}
Let $d\in \N$ and  $m \in \N$.  Then 
 $$
 \lim_{n \to \infty} \, \frac{n^m\,a_{n} (I_d: \, H^{m}_\mix (\T) \to L_2 (\T))}{(\ln n)^{(d-1)m} \, } =
 \Big[\frac{2^{d}}{(d-1)!}\Big]^m \, .
 $$
\end{cor}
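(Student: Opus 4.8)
The plan is to derive the assertion by a simple squeeze, using the norm comparisons of Lemma~\ref{norm_two} together with the limits already established for the $+$--norm and the $*$--norm in Corollary~\ref{lim2}/(i) and Corollary~\ref{lim3}.

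First I would translate Lemma~\ref{norm_two} into statements about norms of identity operators. Since
$\|f|H^m_\mix(\T)\|^{*}\le\|f|H^m_\mix(\T)\|\le\|f|H^m_\mix(\T)\|^{+}$ holds for every $f\in H^m_\mix(\T)$, the two identity maps
$$
I_d:H^{m,+}_\mix(\T)\to H^m_\mix(\T)\qquad\text{and}\qquad I_d:H^m_\mix(\T)\to H^{m,*}_\mix(\T)
$$
both have operator norm at most $1$, because in each case the norm on the target space is pointwise dominated by the norm on the source space. Factoring the embedding $I_d:H^m_\mix(\T)\to L_2(\T)$ through these two maps and invoking the multiplicativity of approximation numbers recalled in Subsection~\ref{parapp} (i.e. $a_n(S\circ T)\le a_n(S)\,\|T\|$ and $a_n(S\circ T)\le \|S\|\,a_n(T)$), I obtain, for every $n\in\N$, the sandwich
$$
a_{n}(I_d:H^{m,+}_\mix(\T)\to L_2(\T))\ \le\ a_{n}(I_d:H^m_\mix(\T)\to L_2(\T))\ \le\ a_{n}(I_d:H^{m,*}_\mix(\T)\to L_2(\T)).
$$

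Next I would multiply this chain throughout by $n^m/(\ln n)^{(d-1)m}$ and let $n\to\infty$. By Corollary~\ref{lim2}/(i) with $s=m$ the left-hand side converges to $\big[2^{d}/(d-1)!\big]^m$, and by Corollary~\ref{lim3} with $s=m$ the right-hand side converges to the same value. Hence the middle term is forced to converge as well, and its limit is $\big[2^{d}/(d-1)!\big]^m$, which is exactly the claim.

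I do not expect any genuine obstacle here; the argument is short once Corollaries~\ref{lim2} and \ref{lim3} are available, which is precisely why Lemma~\ref{norm_two} gives a shorter route than repeating the combinatorial analysis with the weights $v_m(\ell)^{1/m}$. The only point requiring a little care is the bookkeeping: one must keep straight which of the equivalent norms has the larger unit ball, so that the two auxiliary identity operators are indeed contractions, and then apply the multiplicativity estimates in the correct order so that the resulting inequalities between the approximation numbers point the right way.
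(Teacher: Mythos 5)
Your proposal is correct and follows essentially the same route as the paper: the paper's proof of Corollary \ref{lim4} sandwiches $a_n(I_d:H^m_\mix(\T)\to L_2(\T))$ between $a_n(I_d:H^{m,+}_\mix(\T)\to L_2(\T))$ and $a_n(I_d:H^{m,*}_\mix(\T)\to L_2(\T))$ exactly as a consequence of Lemma \ref{norm_two}, and then invokes Corollaries \ref{lim2} and \ref{lim3} with $s=m$. Your bookkeeping of which norm dominates which (hence which unit ball is contained in which, and which way the approximation-number inequalities point) is also handled correctly.
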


\bproof
This follows immediately from Corollaries \ref{lim2} and \ref{lim3}, and 
\beq\label{ws-22}
a_{n} (I_d: \, H^{m,+}_\mix (\T) \to L_2 (\T)) & \le &   a_{n} (I_d: \, H^{m}_\mix (\T) \to L_2 (\T)) 
\nonumber
\\
& \le &  a_{n} (I_d: \, H^{m,*}_\mix (\T) \to L_2 (\T))
\eeq
which is itself a consequence of Lemma  \ref{norm_two}.
\eproof

\noindent
Based on Thms.\ \ref{gutt}, \ref{guttt} and (\ref{ws-22}) we derive  two-sided estimates of 
$a_{n} (I_d: \, H^{m}_\mix (\T) \to L_2 (\T))$
for large $n$.

\begin{satz}\label{gutttt}
Let $m \in \N$ and  $d\in \N$.\\
(i) Then
 $$
  a_{n} (I_d: \, H^{m}_{\mix} (\T) \to L_2 (\T)) \le
 \Big[\frac{6^d}{(d-1)!}\Big]^m \, 2^{-d/2}\, 
 \frac{(\ln n)^{(d-1)m}}{n^m}\, \qquad \mbox{if}\quad n \ge 27^d \, .
 $$
 (ii) In addition
 $$
  a_{n} (I_d: \, H^{m}_{\mix} (\T) \to L_2 (\T)) \ge 
\Big[\frac{3\cdot 2^d}{ d! \, (2+\ln 12)^d} \, \Big]^s
 \frac{(\ln n)^{(d-1)m}}{n^m}
\qquad 
\mbox{if}\quad n > (12 \, e^{2})^d \, .
 $$
\end{satz}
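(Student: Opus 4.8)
The plan is to obtain Theorem~\ref{gutttt} as a short corollary of the sandwich inequality~\eqref{ws-22}, which was already extracted from Lemma~\ref{norm_two}, combined with the two-sided estimates for the weighted norms $\|\cdot|H^{s,+}_{\mix}(\T)\|$ and $\|\cdot|H^{s,*}_{\mix}(\T)\|$ proved in Theorems~\ref{gutt} and~\ref{guttt}. No new combinatorics is required; everything hinges on applying~\eqref{ws-22} in the correct direction, i.e.\ on remembering that among the three norms $H^{m,*}_{\mix}(\T)$ carries the largest unit ball and $H^{m,+}_{\mix}(\T)$ the smallest, so that
$$
a_{n} (I_d: H^{m,+}_\mix (\T) \to L_2 (\T)) \le a_{n} (I_d: H^{m}_\mix (\T) \to L_2 (\T)) \le a_{n} (I_d: H^{m,*}_\mix (\T) \to L_2 (\T))\,.
$$
(An alternative would be to rerun the argument of the previous subsection with the sequence $b_\ell=v_m(\ell)^{1/m}$ in place of $(a_\ell)$, but that is strictly more work and produces the same conclusion, so I would not take that route.)

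For the upper bound (i) I would bound $a_{n} (I_d: H^{m}_\mix (\T) \to L_2 (\T))$ from above by $a_{n} (I_d: H^{m,*}_\mix (\T) \to L_2 (\T))$ via the right-hand inequality above. Since $m\in\N$ forces $m\ge 1>1/2$, the hypothesis $s>1/2$ of Theorem~\ref{guttt}(i) is met with $s=m$, and its upper bound gives, for $n\ge 27^d$,
$$
a_{n} (I_d: H^{m,*}_\mix (\T) \to L_2 (\T)) \le 2^{-d/2}\Big[\frac{6^d}{(d-1)!}\Big]^m\frac{(\ln n)^{(d-1)m}}{n^m}\,,
$$
which is exactly the claimed estimate.

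For the lower bound (ii) I would instead use the left-hand inequality above, estimating $a_{n} (I_d: H^{m}_\mix (\T) \to L_2 (\T))$ from below by $a_{n} (I_d: H^{m,+}_\mix (\T) \to L_2 (\T))$, and then invoke the lower bound of Theorem~\ref{gutt} with $s=m$, valid for $n>(12\,e^2)^d$. Rewriting
$$
\Big[\frac{3}{d!}\Big(\frac{2}{2+\ln 12}\Big)^{d}\Big]^{m}=\Big[\frac{3\cdot 2^{d}}{d!\,(2+\ln 12)^{d}}\Big]^{m}
$$
puts it in the stated form. I expect no genuine obstacle: the argument is essentially two lines, and the only care needed is to confirm that the validity ranges $n\ge 27^d$ and $n>(12\,e^2)^d$, together with the precise constants, transfer unchanged from Theorems~\ref{gutt} and~\ref{guttt}; both checks are immediate, the real content having been done in those earlier results.
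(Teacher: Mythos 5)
Your proposal is correct and coincides with the paper's own (very brief) argument: Theorem~\ref{gutttt} is obtained exactly by sandwiching $a_n(I_d:H^m_{\mix}(\T)\to L_2(\T))$ between $a_n(I_d:H^{m,+}_{\mix}(\T)\to L_2(\T))$ and $a_n(I_d:H^{m,*}_{\mix}(\T)\to L_2(\T))$ via \eqref{ws-22} (i.e.\ Lemma~\ref{norm_two}) and then quoting the lower bound of Theorem~\ref{gutt} and the upper bound of Theorem~\ref{guttt}(i) with $s=m\ge 1>1/2$, with the same validity ranges and constants (the exponent $s$ in the stated lower bound of (ii) is just a typo for $m$).
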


\begin{rem}
 \rm
Also for the embedding $H^{m}_{\mix} (\T) \to L_2 (\T))$ the behavior of 
$a_{n} (I_d: \, H^{m}_{\mix} (\T) \to L_2 (\T))$ for small $n$ is of interest.
By the coincidence $H^1_{\mix}(\T) = H^{1,*}_{\mix}(\T)$ (equal norms) we obtain the relations
$$
    \frac 12 \, \Big(\frac{1}{n}\Big)^{\frac{1}{2+\alpha (n,d)}} \leq a_n(I_d:H^{1}_{\mix}(\T) \to L_2(\T)) \leq 
\Big(\frac{e^2}{n}\Big)^{\frac{1}{4+\log_2 (d^2)}}\,
$$
immediately from Theorem\ \ref{kleinb}.
\end{rem}


\subsection{Various comments on the literature}\label{lit}


\begin{itemize}
 \item Closest to us in aims and methods is the recent paper  \cite{DiUl13}. There, in Theorem\ 3.13, 
the authors obtained for $s>0$ and any $n  \ge 2^d$ the inequality
\begin{equation} \label{zungtino} 
a_n(I_d:H^{s, \Box}_{\mix}(\T)\to L_2(\T))
\ \le \
4^s \Big(\frac{2e}{d-1}\Big)^{s(d-1)} n^{-s} (\log_2 n)^{s(d-1)},
\end{equation}
where $\Box$ indicates that the space $H^{s, \Box}_{\mix}(\T)$ is equipped with a further norm
(based on dyadic decompositions on the Fourier side and different from those studied here).
This has to be compared with 
Theorems \ \ref{gut}, \ref{gutt}, \ref{guttt} and \ref{gutttt}.
In all these cases we have a super-exponential decay of the constants $C_s (d)$ in $d$.
\item
Super-exponential decay of the constants $C_s (d)$ in $d$  
has been observed before. Bungartz and Griebel \cite[Theorem\ 3.8]{BG04} investigated the non-periodic situation.
An approximation is given with respect to tensor products of piecewise linear functions. 
The authors proved that for any  $n \in \N$ there exists a 
subspace $V_n^{(1)} \subset L_2 ([0,1]^d)$ with $m = m(d,n)$ degrees of freedom 
and a projection $Q_n$ onto $V_n^{(1)}$ such that 
\be\label{griebel}
 \|\, f- Q_n f\, |L_2([0,1]^d)\| \leq \frac{2}{12^d}2^{-2n} A(d,n)
    \, \Big\|\, \frac{\partial^{2d}f}{\partial x_1^2 \, \ldots \, \partial x_d^2}\, 
    \Big|L_{2}([0,1]^d)\Big\|
\ee
holds for all continuous functions $f$ vanishing on the boundary $\partial([0,1]^d)$. The latter assumption 
is actually crucial for the explicit bound in \eqref{griebel}.
Here, the number $A(d,n)$ is given by 
$$
  A(d,n) := \sum_{k = 0}^{d-1}\binom{n+d-1}{k}\,.$$ 
Of course, inequality \eqref{griebel} does not allow for a comparison 
with the results obtained in this paper. One first has to rewrite the bounds in terms of the degrees of freedom $m$. 
For this issue a careful 
two-sided estimate of $m = \dim V^{(1)}_n$ is required first.
Lemma 3.6  in \cite{BG04} shows that $\dim V^{(1)}_n$ can be estimated from 
above and below by 
\be\label{dimVn}
2^{n-1} \, \binom{n+d-2}{d-1} \le   \dim V^{(1)}_n = \sum_{j=0}^{n-1} 2^j \binom{j+d-1}{d-1} \le  2^{n}\,  \binom{n+d-2}{d-1}\, .
\ee
For $n\ge d-1$ the expression $\binom{n+d-1}{k}$ is increasing in $k\le d-1$. In this case we have the estimate
\[
A(d,n) \leq d\binom{n+d-1}{d-1}\,.
\]
We will now transfer \eqref{griebel} to the notion of approximation numbers. To be precise we consider the space/norm
\be\nonumber
  \begin{split}
    H^2_{\mix,0}([0,1]^d):= \{ f \in L_2([0,1]^d)~:~&\|\, f\, |H^{2}_{\mix}([0,1]^d)\|:=
\max_{\alpha \in \{0,2\}^d} \|\, D^\alpha f\, |L_{2}([0,1]^d)\|<\infty\\ 
&\mbox{ and } f=0 \quad \mbox{on the boundary}\}\,.
  \end{split}
\ee
Note, that $\|\, f\, |H^{2}_{\mix}([0,1]^d)\|$ in the previous formula represents a very weak norm in $H^2_{\mix,0}([0,1]^d)$ compared to \eqref{t-1}. 
Based on \eqref{griebel}, \eqref{dimVn} and the monotonicity of approximation numbers 
we find for any $m$ satisfying $\dim V_{n}^{(1)}<m\leq \dim V_{n+1}^{(1)}$ for some $n\geq d-1$ the relation
\[
\begin{split}
 a_m(I_d:H^2_{\mix,0}([0,1]^d) \to L_2([0,1]^d)) &\leq \frac{2}{12^d}2^{-2n} A(d,n)  \leq \frac{8d}{12^d} \, m^{-2} \, 
 \binom{n+d-1}{d-1}^{3}\\ &\leq \frac{8d}{12^d} \, m^{-2} \, 
 \binom{\log_2 m +d}{d-1}^{3}\, \\
&\leq \frac{8d}{12^d} \, m^{-2} \, 
 \Big(\frac{2\, e \, \log_2 m}{d-1}\Big)^{3(d-1)}\, .
\end{split}
\]
Consequently, inequality \eqref{griebel} implies for any $m\geq 2^{d-1}\binom{2d-2}{d-1}$ that
\be\label{gr2}
a_m(I_d:H^2_{\mix,0}([0,1]^d) \to L_2([0,1]^d))\leq C(d)\, m^{-2}(\log_2 m)^{3(d-1)}\,,
\ee
where 
\be\nonumber
    C(d) := \frac{2d}{3}\, \Big(\frac{2e}{12^{1/3}(d-1)}\Big)^{3(d-1)}\,.
\ee
This constant $C(d)$ is decaying extremely fast, i.e., super-exponentially in $d$, 
similar as in Thms. \ref{gut}, \ref{gutt}, \ref{guttt}, \ref{gutttt} or in \eqref{zungtino} above.  
But comparing \eqref{gr2} with  the  quoted estimates of the approximation numbers 
$a_n (I_d:H^2_{\mix}(\tor^d), L_2(\tor^d))$, then it is obvious 
that the power of the logarithm $3(d-1)$ in \eqref{gr2}
is larger than there, where it is always  $2(d-1)$. This is at least partly caused by 
the fact that interpolation operators of Smolyak type 
are known to be not optimal in the sense of approximation numbers in such a context and Bungartz and Griebel 
are using  an interpolation operator of Smolyak type with respect to a sparse grid. 
However, a reasonable comparison of \eqref{griebel} and  Thms. \ref{gut}, \ref{gutt}, \ref{guttt}, \ref{gutttt} or  \eqref{zungtino}
can not be made because of the following reasons:
\begin{itemize}
\item[(i)] The periodic Sobolev spaces $H^2_{\mix}(\tor^d)$ are smaller than the non-periodic 
Sobolev spaces $H^2_{\mix}([0,1]^d)$ (and the ``difference'' is increasing with $d$).
\item[(ii)] The space $H^2_{\mix,0}([0,1]^d)$
is much smaller than the original space $H^2_{\mix}([0,1]^d)$.
\item[(iii)] On the right-hand side in \eqref{griebel} only the term 
$\displaystyle \|\, \frac{\partial^{2d}f}{\partial x_1^2 \, \ldots \, \partial x_d^2}\, |L_{2}([0,1]^d)\|$
shows up, which is much smaller than the full norm used in our investigations above.
\end{itemize}

\item Preasymptotics. The inequalities \eqref{griebel}
and \eqref{dimVn} remain also true for small $n$. Note, that in case $1  \le n\leq d-1$ the number $A(d,n)$ can be estimated  as follows 
$$
\frac 12 \, 2^{n+d-1}\le    A(d,n) = \sum_{k = 0}^{d-1}\binom{n+d-1}{k} \le 2^{n+d-1} 
$$
(we sum up to $d-1$, which is larger than $(n+d-1)/2$). Let $\dim V_n^{(1)} \leq m \leq \dim V_{n+1}^{(1)}$ for some $1 \leq n \leq d-1$. Using 
\eqref{dimVn}, \eqref{griebel} and the space $H^2_{\mix,0}([0,1]^d)$ defined above we obtain
\be\nonumber
\begin{split}
a_m(I_d:H^2_{\mix,0}([0,1]^d) \to L_2([0,1]^d)) &\leq \frac{2}{12^d}2^{-2n} A(d,n)\\ 
&\le \frac{4}{12^d}m^{-1}2^{n+d-1}2^{-n}\binom{n+d-1}{d-1}\\
&\leq \frac{1}{3}\Big(\frac{4e}{12}\Big)^{d-1}m^{-1} = \frac{1}{e}\Big(\frac{e}{3}\Big)^dm^{-1}\,.
\end{split}
\ee
Hence we obtain for $1\leq m\leq 2^{d-1}\binom{2d-2}{d-1}$ the ``preasymptotic'' decay
\be\label{t-5}
   a_m(I_d:H^2_{\mix,0}([0,1]^d) \to L_2([0,1]^d)) \leq C(d)m^{-1}
\ee
with 
\[
C(d):= \frac 1 e \, \Big(\frac{e}{3}\Big)^d \, .
\]
This time we ``just have'' exponential decay of the constant $C(d)$ in $d$.
Now we  compare this with our results obtained in 
Thms. \ref{small}, \ref{kleina} and  \ref{kleinb}.
Let us concentrate on \eqref{ws-46}.
There we proved for a range in $1\leq m \leq 4^d$ the inequality
\[
a_m(I_d:H^{2,\#}_{\mix}(\T) \to L_2(\T)) \leq \Big(\frac{e^2}{m}\Big)^{\frac{1}{1+ \log_2 \sqrt{d}}}\,.
\]
Inequality \eqref{t-5} looks much better than this inequality (with respect to the exponent of $m$ as well as with respect to the $d$-dependence of the constant).
This is mainly caused by the homogeneous boundary condition in \eqref{t-5} which shrinks the space significantly. 

\item
Also Schwab, S\"uli, and  Todor \cite{SST08} considered the non-periodic situation. A particular case of 
their  results in \cite[Theorem\ 5.1]{SST08} can be formulated as follows. 
For $s\in \N$, $s\geq 2$ and $L\geq d-1$ there exists a 
subspace $V_L$  of $L_2 ([0,1]^d)$ with $m=\dim V_L$ degrees of freedom  and a projection $Q_L$ onto $V_L$ such that 
\be\label{schwab}
\|\, f - Q_L f\, |L_2([0,1]^d)\| \leq C_2(d,s) m^{-s}(\log m)^{(d-1)(s+1)}\, 
\|\, f\, |H^{s, \triangle}_{\mix}([0,1]^d)\|\,,
\ee
where $C_2(d,s)$ decays super-exponentially in $d$. 
Here the authors use the stronger norm
$$
\|\, f\, |H^{s, \triangle}_{\mix}([0,1]^d)\|:=
\sum_{\substack{0\leq \alpha_i \leq s\\ i=1,...,d}} \|\, D^\alpha f\, |L_{2}([0,1]^d)\|\,,
$$
where $s\in \N$. 
The result is stated in \cite{SST08} in a slightly different form.  
As above one has to supplement an inequality similar to \eqref{griebel} by 
two-sided estimates for $\dim V_L$ to turn it into \eqref{schwab}.
In some sense \eqref{schwab} generalizes \eqref{griebel} to the case of higher smoothness.
As before the power of the logarithm is worse compared 
with the behavior of the approximation numbers
$a_m (I_d:H^s_{\mix}(\tor^d), L_2(\tor^d))$.

\item
Preasymptotics. In case $L\leq d-1$ Schwab, S\"uli, and  Todor \cite{SST08} 
obtained, under additional restrictions, the  estimate 
$$
\|\, f - Q_L f\, |L_2([0,1]^d)\| \leq C_3(d,s)  2^{-Ls}\, 
\|\, f\, |H^{s,\triangle}_{\mix}([0,1]^d)\|\,,
$$
where $C_3(d,s)$ decays exponentially in $d$. 
Again this has to be complemented by a two-sided estimate of $m:= \dim V_L$.
A rather rough but sophisticated estimate
yields
\[
\|\, f - Q_L f\, |L_2([0,1]^d)\| \leq C_4(d,s)  m^{-{s}/{3}}\, 
\|\, f\, |H^{s,\triangle}_{\mix}([0,1]^d)\|\,, \qquad m \le 2^{2(d-1)}\, , 
\]
where we have been unable to clarify the dependence of the constant $C_4(d,s)$ on $d$.

\item
Neither Bungartz and Griebel \cite{BG04} nor Schwab, S\"uli, and  Todor \cite{SST08} considered 
estimates from below.

\item
Sampling operators versus general linear operators.
As mentioned above the estimates \eqref{gr2} and \eqref{schwab}
are obtained by using  interpolation operators $Q_n$ with $m(d,n)$ sample points 
based upon univariate spline interpolation operators (via a Smolyak construction). 
Let us mention the following result in this context:
in \cite[Theorem\ 6.2,(i)]{SU10} we show for $1/2<s<2$ and all $m \in \N$
\be\label{er-07}
     \|\, f- A_m f\, |L_2([0,1]^d)\| \lesssim m^{-s}(\log m)^{(d-1)(s+1/2)}\, \|\, f \, |H^s_{\mix}([0,1]^d)\|\,.
\ee
Compared to the estimates \eqref{gr2} and \eqref{schwab} we improved the power of the logarithm by $(d-1)/2$, however,
we do not know about the $d$-dependence of the constants in \eqref{er-07}.
The restrictions on $s$ in \eqref{er-07} are caused by the fact that we worked with piecewise linear functions. 
In D\~ung \cite{DD10} the relation \eqref{er-07} has been  extended to all $s>1/2$ via $B$-spline quasi-interpolation
(but also without taking care of the $d$-dependence of the constants).

\item
Motivated by the aim to approximate the solution of a Poisson equation in the energy norm, i.e., in the norm of the isotropic 
Sobolev space $H^1$, Bungartz and Griebel \cite{BG99} investigated estimates of the quantities $a_n (I_d:H^2_{\mix}([0,1]^d), H^1([0,1]^d))$.
These studies have been continued in Griebel, Knapek \cite{GN00, GN09}, Bungartz, Griebel \cite{BG04}, Griebel \cite{Gr}, 
Schwab, S\"uli, and  Todor \cite{SST08}, and D\~ung, Ullrich \cite{DiUl13}.
Let us comment on the non-periodic situation first. It was already noticed by Griebel in \cite[Theorem\ 2]{Gr} that in this situation
the constant (in front of the approximation order term) decays exponentially in $d$. To be more precise, 
he proved that there is a subspace $V_n$ with $n$ degrees of freedom and a projection $Q_n$ onto $V_n$ such that for large $n$
\be\label{f60}
 \|\, f- Q_n f\, |H^1 ([0,1]^d)\| \leq c\cdot c_1(d)\cdot c_2(d)\cdot  n^{-1} \, 
\Big\|\, \frac{\partial^{2d}f}{\partial x_1^2...\partial x_d^2 }\, \Big|L_{\infty}[0,1]^d\Big\|\,, 
\ee
holds, where
$$
c_1(d) = \frac{d}{2}e^d\quad \mbox{ and }\quad  c_2(d) = \frac{d}{3^{(d-1)/2} \, 4^{d-1}}
\Big[\frac 12 + \Big(\frac{5}{2}\Big)^{d-1}\Big]\,.$$
Hence, the product $c_1(d)c_2(d)$ decays like $d^2\cdot 0.980875^d$\,.
Note, that the $L_{\infty}$-norm is involved in \eqref{f60} and the functions $f$ are taken from spaces 
with mixed smoothness of order $2$ and homogeneous boundary conditions.  
The situation changes significantly if one replaces $L_{\infty}$ by $L_2$ in \eqref{f60}. The source space for $f$ is now getting larger and hence
the approximation is getting worse. In \cite[Table 3.2, page 35]{BG04} the constant $c_2(d)$ can be chosen as 
$$
  c_2(d)  = \frac{2d}{\sqrt{3}\cdot 6^{d-1}}
\Big[\frac 12 + \Big(\frac{5}{2}\Big)^{d-1}\Big]\,.
$$
Therefore, $c_1(d)c_2(d)$ can only be estimated by $d^2\cdot 1.1326^d$ and thus an exponential decay can not longer be guaranteed. However, 
if the smoothness $s$ of the source space is less than $2$ we can say something in the periodic setting. From  
\cite[Theorem\ 3.6,(ii)]{DiUl13} it follows that if the error is measured in $H^1(\T)$ and $s<2$ we get for $n>\lambda^d$ (for some
$\lambda > 1$) the relation
$$
  a_n(I_d:\tilde{H}^s_{\mix}(\T) \to H^1(\T)) \leq cd^{s-1}\Big(\frac{1}{2^{1/(s-1)}-1}\Big)^d \, n^{-(s-1)}\,.
$$
Here $\tilde{H}^s_{\mix}(\T)$ is the subspace of $H^s_{\mix}(\T)$ containing all functions $f$ such that 
$c_k(f) \neq 0 \implies \prod_{i=1}^d k_i \neq 0$\,. This technical condition is essential to prove \eqref{f60}, see \cite[Theorem\ 2]{Gr}. 
Without this condition, i.e., for the 
entire space $H^s_{\mix}(\T)$ normed with dyadic building blocks on the Fourier side, we can disprove the exponential decay 
of the constants if $s\leq 2$, see \cite[Theorem\ 4.7,(i)]{DiUl13}. 
\end{itemize}


\section{Quasi-polynomial tractability}\label{Erich}


Now we will translate our results to recent tractability notions. Various concepts
of tractability are discussed in the recent monographs by Novak and Wo{\'z}niakowski 
\cite{NoWo08, NoWo10, NoWo12}\,. We will obtain ``quasi-polynomial tractability'' of the respective approximation
problems. This notion has been recently
introduced in \cite{GnWo11} and is a stronger notion than ``weak tractability''\,.


\subsection{General notions of tractability}


For arbitrary $s>0$ and all $d\in \N$ we consider the embedding operators (formal identities)
$$
    I_d:H^s_{\mix}(\T) \to L_2(\T)\,,
$$
where the Sobolev spaces are equipped with the norms $\|\cdot|H^s_{\mix}(\T)\|$, $\|\cdot|H^s_{\mix}(\T)\|^*$,  and
$\|\cdot|H^s(\T)\|^\#$\,. In both cases we have $\|I_d\| = 1$ for all $s>0$ and $d\in \N$. In other
words, the {\em normalized error criterion} is satisfied. In this context, a {\em linear algorithm} that uses
{\em arbitrary information} is of the form
$$
  A_{n,d}(f) = \sum\limits_{j=1}^n L_j(f) g_j\,,
$$
where $g_j \in L_2(\T)$ and $L_j$ are continuous linear functionals. If the error is measured in
the norm of $L_2(\T)$ we can identify the algorithm $A_{n,d}$ with a bounded linear
operator $A_{n,d}:H^s_{\mix}(\T) \to L_2(\T)$ of rank at most $n$. The worst-case error of $A_{n,d}$
with respect to the unit ball (respective norms) in $H^s_\mix(\T)$
$$
    \sup\limits_{\|f|H^s_{\mix}(\T)\|\leq 1} \|f-A_{n,d}(f)|L_2(\T)\|
$$
clearly coincides with the operator norm $\|I_d-A_{n,d}: H^s_{\mix}(\T) \to L_2(\T)\|$,
and the {\em $n$th minimal worst-case error} with respect to {\em linear algorithms
and general information}
$$
    \inf\limits_{\rank A_{n,d} \leq n} \|I-A_{n,d}:H^s_{\mix}(\T)\to L(\T)\|
$$
is just the {\em approximation number} $a_{n+1}(I_d:H^s_{\mix}(\T) \to L_2(\T))$, see \eqref{0002}\,.

Finally, the {\em information complexity} of the $d$-variate approximation problem
is measured by the quantity $n(\varepsilon, d)$ defined by
$$
   n(\varepsilon,d):=\inf\{n\in \N:a_n(I_d) \leq \varepsilon\}
$$
as $\varepsilon\to 0$ and $d\to \infty$\,. The approximation problem is called {\em weakly tractable}, if
\be\label{eq55}
    \lim\limits_{1/\varepsilon+d\to\infty} \frac{\ln n(\varepsilon,d)}{1/\varepsilon + d} = 0\,,
\ee
i.e., $n(\varepsilon,d)$ neither depends exponentially on $1/\varepsilon$ nor on $d$. The problem
is called {\em intractable}, if \eqref{eq55} does not hold, see the definition \cite[p.\ 7]{NoWo08}.
If for some $0<\varepsilon<1$ the number $n(\varepsilon,d)$ is an exponential function in $d$ then
we say that the approximation problem suffers from {\em the curse of dimensionality}. In other words, if there
exist positive numbers $c, \varepsilon_0, \gamma$ such that
$$
        n(\varepsilon,d) \geq c(1+\gamma)^d\,,\quad \mbox{for all } 0<\varepsilon\leq \varepsilon_0 \mbox{ and infinitely many }d\in \N\,,
$$
then the problem suffers from
{\em the curse of dimensionality}\,.\\
We need a further notion of tractability, namely {\it quasi-polynomial tractability}, see for instance \cite{GnWo11}\,. In fact,
the approximation problem is called {quasi-polynomially tractable} if there are positive numbers $t$ and $C_t$ such that 
\be\label{eq32}
        n(\varepsilon,d) \leq C_t\exp(t\ln(\varepsilon^{-1})(1+\ln(d)))\,.
\ee

Of course, quasi-polynomial tractability implies weak tractability.


\subsection{Tractability results for $H^{s}_{\mix}(\T)$}


By our results in Section \ref{number} we are very well prepared for 
the investigation of these tractability problems,
resulting in short proofs of the assertions. 

\begin{satz}\label{qpt1} For every $s>0$ the approximation problem for the embeddings
$$
    I_d:H^{s,\#}_\mix(\T) \to L_2(\T)
$$
is quasi-polynomially tractable. 
\end{satz}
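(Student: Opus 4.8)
The plan is to bound the information complexity $n(\varepsilon,d)=\inf\{n\in\N:\,a_n(I_d)\le\varepsilon\}$ directly by an expression of the form \eqref{eq32}, splitting the argument according to how small $\varepsilon$ is relative to $d$. Write $a_n:=a_n(I_d:H^{s,\#}_{\mix}(\T)\to L_2(\T))$; since the weights $1/w^\#_s(k)$ tend to $0$, we have $a_n\to 0$, so $n(\varepsilon,d)<\infty$ for every $\varepsilon>0$. We have at our disposal two upper bounds: the preasymptotic estimate $a_n\le(e^2/n)^{s/(2+\log_2 d)}$ of Theorem \ref{small}, valid for $d\ge 2$ and $1\le n\le \frac d2 4^d$, and the global estimate $a_n\le\big(\tfrac1n(\tfrac{\pi^2}{3}-1)^d\big)^{s/2}$ from \eqref{ws-14}, valid for all $n\in\N$ and all $d$. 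Since $\varepsilon\ge 1$ gives $n(\varepsilon,d)=1$ trivially, and since any fixed finite range of $d$ (in particular $d=1$, where $H^{s,\#}_{\mix}(\tor)=H^{s,\#}(\tor)$) is harmless — on it \eqref{ws-14} gives $n(\varepsilon,d)\le c\,\varepsilon^{-2/s}$ with $c$ bounded, and $\varepsilon^{-2/s}\le\exp(\tfrac2s\ln(\varepsilon^{-1})(1+\ln d))$ — it suffices to treat $0<\varepsilon<1$ and $d$ large.

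\emph{First regime.} Put $N:=\lceil e^2\varepsilon^{-(2+\log_2 d)/s}\rceil$, and suppose $N\le\frac d2 4^d$. Then Theorem \ref{small} applies at $n=N$ and yields $a_N\le(e^2/N)^{s/(2+\log_2 d)}\le\varepsilon$, hence $n(\varepsilon,d)\le N$. Using the elementary inequality $2+\log_2 d=2+\ln d/\ln 2\le\frac{2}{\ln 2}(1+\ln d)$ for $d\ge 1$, we get $\ln n(\varepsilon,d)\le 2+\frac{2}{s\ln 2}\ln(\varepsilon^{-1})(1+\ln d)$, which is exactly of the form required in \eqref{eq32}, with $t=\frac{2}{s\ln 2}$ and the bounded factor $e^2$ in front.

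\emph{Second regime.} If instead $N>\frac d2 4^d$, then $e^2\varepsilon^{-(2+\log_2 d)/s}\ge\frac d2 4^d-1\ge\frac d4 4^d$, and taking logarithms forces, for $d$ large, $\ln(\varepsilon^{-1})\ge\frac{s\,d\ln 4}{2+\log_2 d}$. Multiplying by $\frac{1+\ln d}{s\ln 4}$ and using $\frac{1+\ln d}{2+\log_2 d}\ge\frac12$ (equivalently $2\ln 2\ge1$), this gives $d\le\frac{1}{s\ln 2}\ln(\varepsilon^{-1})(1+\ln d)$. In this regime we no longer use Theorem \ref{small}; instead \eqref{ws-14} shows that $N':=\lceil(\tfrac{\pi^2}{3}-1)^d\varepsilon^{-2/s}\rceil$ satisfies $a_{N'}\le\varepsilon$, so $n(\varepsilon,d)\le N'$ and $\ln n(\varepsilon,d)\le d\ln(\tfrac{\pi^2}{3}-1)+\tfrac2s\ln(\varepsilon^{-1})+\ln 2$. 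The second term is already admissible, and the bound on $d$ just obtained turns the first term into a constant multiple of $\ln(\varepsilon^{-1})(1+\ln d)$ as well. Collecting the three cases (small $d$, first regime, second regime), $t$ may be taken as an explicit multiple of $1/s$ and $C_t$ as a fixed constant, proving \eqref{eq32}.

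The step I expect to be the crux is the second regime: no single one of our estimates suffices on its own, because the preasymptotic bound of Theorem \ref{small} is only available in the range $n\le\frac d2 4^d$ — i.e. as long as $\varepsilon$ is not exponentially small in $d$ — while any estimate valid for all $n$, such as \eqref{ws-14}, carries a factor exponential in $d$ and hence, by itself, only delivers weak tractability. The care needed is precisely in checking that leaving the range of Theorem \ref{small} forces $\varepsilon$ to be so small that the surplus term $d\ln(\tfrac{\pi^2}{3}-1)$ gets absorbed into $t\ln(\varepsilon^{-1})(1+\ln d)$; this reduces to the elementary comparison $\frac{1+\ln d}{2+\log_2 d}\ge\frac12$.
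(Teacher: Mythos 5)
Your proposal is correct. It differs from the paper's own proof mainly in how the small-$\varepsilon$ (large-$n$) regime is handled. The paper argues directly on the cardinalities: choosing $r\in\N$ with $r^{-s}<\varepsilon$, it bounds $n(\varepsilon,d)\le C(r,d)$ via Lemma~\ref{an1} and then estimates $C(r,d)$ in ranges of $r$ — for $r\le 2^d$ it re-derives $C(r,d)\le e^2 r^{2+\log_2 d}$ (the very estimate underlying Theorem~\ref{small}, so your first regime is essentially the paper's first case), while for $r\ge 2^d$ it uses $C(r,d)\le 3^d r^2\le r^4$, the exponential factor $3^d$ being absorbed into $r^2$ precisely because $r\ge 2^d$; this yields $\ln n(\varepsilon,d)\le 4\ln r\approx \tfrac4s\ln(\varepsilon^{-1})$, a bound with no $d$-dependence at all in that regime. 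You instead work only with the stated results: Theorem~\ref{small} inside the preasymptotic range $n\le\tfrac d2 4^d$, and the tensor-trick bound \eqref{ws-14} outside it, absorbing the factor $\bigl(\tfrac{\pi^2}{3}-1\bigr)^d$ through the observation that leaving the preasymptotic range forces $\ln(\varepsilon^{-1})\gtrsim s\,d\ln 4/(2+\log_2 d)$ and hence $d\le \tfrac{1}{s\ln 2}\ln(\varepsilon^{-1})(1+\ln d)$ — the same absorption idea in spirit as the paper's $3^d\le r^2$, but executed at the level of $a_n$-estimates rather than of $C(r,d)$. The paper's route buys a slightly sharper, $d$-free exponent for very small $\varepsilon$; yours buys modularity, since it never returns to the combinatorics. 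Two harmless bookkeeping points: the ceilings in $N$ and $N'$ add a bounded multiplicative constant that should be folded into $C_t$ (your ``$2+\dots$'' is marginally optimistic), and your second-regime log computation needs $\ln d-\ln 4-2\ge 0$, i.e.\ $d\ge 4e^2$, which is indeed covered by your initial reduction to large $d$; neither affects correctness.
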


\bproof 
Let $0<\varepsilon\leq 1$ be given. Select $r\in \N$ such that $r^{-s} < \varepsilon \leq (r-1)^{-s}$. 
Employing Lemma\ \ref{an1} we get $a_{C(r,d)}(I_d:H^{s,\#}_{\mix}(\T)\to L_2(\T)) = 1/r^s < \varepsilon$\,. This implies 
$$
 n(\varepsilon,d) \leq n(r^{-s},d) \leq C(r,d)\,.
$$
Once more, the problem reduces to the estimation of $C(r,d)$. Parts of it have already been done in the proof of Theorem\ \ref{small} above. There
we observed that $C(r,d) \leq e^2r^{2+\log_2 d}$ if $r\leq 2^d$. In case $r\ge  e^d$ we proved 
$C(r,d) \leq r^2 \, 3^d \leq r^{4}$, see (\ref{ws-25}). It remains to deal with $2^d < r < e^d$.
Based on Lemma \ref{ook}, Lemma \ref{vol} and taking into account  $x^k/k! \le e^x$ applied to $x= \ln r$ we conclude
\[
    C(r,d)  \le   1+  \sum\limits_{\ell = 1}^{d} \binom{d}{\ell}\, 2^\ell \, r\,  \frac{(\ln r)^{\ell-1}}{(\ell-1)!} 
 \leq   r^2 \, \sum\limits_{\ell = 1}^{d} \binom{d}{\ell}\, 2^\ell 
 \le  3^d \,   r^2 \le r^4\, .
\]
This gives
\beq\label{ws-26}
    \ln(n(\varepsilon,d)) \leq \ln C(r,d) \leq \left\{\begin{array}{rcl}  2 + (2 + \log_2 d)\ln r &:& r\leq 2^d\, ,
\nonumber
\\
  4\,  \ln r&:& r\ge 2^d\,.
    \end{array}\right.
\eeq
Since $\ln r \sim \ln(1/\varepsilon)/s$ we obtain \eqref{eq32}\,. The proof is complete. 
\eproof

\begin{cor} \label{qpt2} 
For every $s>0$ and every $m\in\N$ the approximation problems for the embeddings
$$
    I_d:H^{s,+}_\mix(\T) \to L_2(\T)\quad\mbox{,}\quad I_d:H^{s,\ast}_\mix(\T) \to L_2(\T)\quad\mbox{and}\quad I_d:H^{m}_\mix(\T) \to L_2(\T)
$$
are quasi-polynomially tractable. 
\end{cor}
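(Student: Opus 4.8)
The plan is to deduce everything from Theorem~\ref{qpt1} by a soft monotonicity argument based on the norm-one embeddings collected in Lemmas~\ref{norm_one} and \ref{norm_two}. The underlying principle is the following: if $X$ and $Y$ are two of our Sobolev spaces on $\T$ with $\|I_d:X\to Y\|\le 1$, then factoring $I_d:X\to L_2(\T)$ through $Y$ and using multiplicativity of approximation numbers gives $a_n(I_d:X\to L_2(\T))\le a_n(I_d:Y\to L_2(\T))$ for all $n\in\N$. Hence the information complexities satisfy $n(\varepsilon,d;X)\le n(\varepsilon,d;Y)$ for all $\varepsilon,d$, and a bound of the form \eqref{eq32} for the approximation problem associated with $Y$ is inherited verbatim (same $t$, same $C_t$) by the one associated with $X$. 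Thus it suffices, for each of the three source spaces in the statement, to exhibit a \emph{norm-one} embedding into some space $H^{t,\#}_{\mix}(\T)$ with $t>0$ and then to invoke Theorem~\ref{qpt1}, which is stated for every positive smoothness.

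Concretely, for $H^{s,+}_{\mix}(\T)$ I would use Lemma~\ref{norm_one}/(v), which provides the norm-one embedding $H^{s,+}_{\mix}(\T)\hookrightarrow H^{s/2,\#}_{\mix}(\T)$; quasi-polynomial tractability then follows from Theorem~\ref{qpt1} applied with smoothness $s/2$. For $H^{s,*}_{\mix}(\T)$ I would distinguish two cases. If $0<s\le 1/2$, Lemma~\ref{norm_one}/(iii) gives the norm-one embedding $H^{s,*}_{\mix}(\T)\hookrightarrow H^{s,\#}_{\mix}(\T)$ and Theorem~\ref{qpt1} applies directly. If $s>1/2$, combine the norm-one embedding $H^{s,*}_{\mix}(\T)\hookrightarrow H^{1/2,*}_{\mix}(\T)$ from Lemma~\ref{norm_one}/(iv) with the equality $\|\cdot|H^{1/2}_{\mix}(\T)\|^{*}=\|\cdot|H^{1/2}_{\mix}(\T)\|^{\#}$ recorded after Definition~\ref{varnorms} (equivalently, with Lemma~\ref{norm_one}/(iii) at $s=1/2$); this yields $H^{s,*}_{\mix}(\T)\hookrightarrow H^{1/2,\#}_{\mix}(\T)$ with norm one, and Theorem~\ref{qpt1} finishes it. Finally, for $H^m_{\mix}(\T)$ with $m\in\N$, the first inequality in \eqref{ws-03} of Lemma~\ref{norm_two} reads $\|f|H^m_{\mix}(\T)\|^{*}\le\|f|H^m_{\mix}(\T)\|$, i.e.\ $H^m_{\mix}(\T)\hookrightarrow H^{m,*}_{\mix}(\T)$ with norm one, so quasi-polynomial tractability for $H^m_{\mix}(\T)$ follows from the case just treated.

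There is no genuinely hard step once Theorem~\ref{qpt1} is in hand; the argument is entirely formal. The single point that must be handled with care is that the reduction has to proceed through \emph{norm-one} embeddings and not merely through the equivalence of the norms $\|\cdot\|^{+}$, $\|\cdot\|^{*}$, $\|\cdot\|^{\#}$ and $\|\cdot|H^m_{\mix}(\T)\|$: as emphasized in Section~\ref{spaces}, the equivalence constants between these norms grow exponentially in $d$, and carrying a factor $2^{cd}$ through the estimate $n(\varepsilon,d)\le C(r,d)$ from the proof of Theorem~\ref{qpt1} would add a term of order $d\log d$ to $\ln n(\varepsilon,d)$, which is incompatible with \eqref{eq32}. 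The norm-one embeddings of Lemmas~\ref{norm_one} and \ref{norm_two}, possibly at the price of halving the smoothness, avoid this loss completely, which is precisely why they were isolated earlier.
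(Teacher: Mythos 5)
Your proof is correct and follows essentially the same route as the paper: reduction to Theorem~\ref{qpt1} via the norm-one embeddings of Lemma~\ref{norm_one} (the paper first reduces to $s<1/2$ using (iv) and then invokes (iii) and (v), which is trivially equivalent to your case distinction for $\|\cdot\|^{*}$ and your direct use of (v) for $\|\cdot\|^{+}$). You are in fact slightly more explicit than the paper in treating $H^m_{\mix}(\T)$ via the first inequality of Lemma~\ref{norm_two}, and your emphasis on using norm-one embeddings rather than mere norm equivalence is exactly the right point.
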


\bproof The results are direct consequences of the embeddings in Lemma\ \ref{norm_one}. Because of Lemma\ \ref{norm_one}/(iv) it is sufficient to consider $s<1/2$. Then Lemma\ \ref{norm_one}/(iii) and Theorem\ \ref{qpt1} immediately imply the statement for $H^{s,*}_{\mix}(\T)$. Finally, Lemma\ \ref{norm_one}/(v) and Theorem\ \ref{qpt1} give quasi-polynomial tractability for $H^{s,+}_{\mix}(\T)$.
\eproof

\begin{rem} \rm Tensor product problems play an essential role in IBC (information based complexity), see, 
e.g., Section  2 in Chapter 5 and Section  2 in Chapter 8 of the monograph 
\cite{NoWo08}. The spaces $H^m_{\mix}(\T)$, $H^{s,\#}_\mix(\T)$, $H^{s,+}_\mix(\T)$, and $H^{s,*}_\mix(\T)$ are $d$-fold tensor products of the univariate spaces $H^m(\tor)$, $H^{s,\#}(\tor)$, $H^{s,+}(\tor)$, and $H^{s,*}(\tor)$, respectively,
see for instance \cite{SU09}. Obviously, the identity $I_d$ is a compact tensor product operator (considered as a mapping into the 
tensor product space $L_2 (\T)$).
Since the approximation numbers decay polynomially in these four univariate situations and $1 = a_1 > a_2$\,, we obtain the following conclusion from the general Theorem\ 3.3 of \cite{GnWo11}: For any $s>0$, all four problems $I_d:H^{m}_\mix(\T) \to L_2(\T)$, $I_d:H^{s,\#}_\mix(\T) \to L_2(\T)$, $I_d:H^{s,+}_\mix(\T) \to L_2(\T)$, and $I_d:H^{s,*}_\mix(\T) \to L_2(\T)$
are quasi-polynomially tractable (and polynomially intractable). Let us also mention, that this result in \cite{GnWo11}
has a forerunner in \cite{NoWo09}, where it has been proven that such tensor product problems are weakly tractable. Hence, 
Theorem\ \ref{qpt1} and Corollary\ \ref{qpt2} are special cases of a more general theory.
However, the approach given here is different.
\end{rem}

{\bf Acknowledgments.}
The authors would like to thank the organizers of the Dagstuhl seminar 12391 ``Algorithms and Complexity for Continuous Problems'', 2012,
where this work has been initiated, for providing a pleasant and fruitful working atmosphere. In addition, the authors
would like to thank Jan Vyb{\'i}ral and Glenn Byrenheid for commenting on earlier versions of this manuscript.

\end{document}